\documentclass[hidelinks,onefignum,onetabnum]{siamart250211}

\usepackage{lipsum}
\usepackage{amsfonts}
\usepackage{graphicx}
\usepackage{epstopdf}
\usepackage{algorithmic}
\ifpdf
  \DeclareGraphicsExtensions{.eps,.pdf,.png,.jpg}
\else
  \DeclareGraphicsExtensions{.eps}
\fi

\newsiamremark{remark}{Remark}
\newsiamremark{hypothesis}{Hypothesis}
\crefname{hypothesis}{Hypothesis}{Hypotheses}
\newsiamthm{claim}{Claim}
\newsiamremark{fact}{Fact}
\crefname{fact}{Fact}{Facts}

\headers{Time-varying Job-swtiching costs}{G. Ha, J. Jeon, and J. OK}

\title{Finite-Horizon Optimal Consumption and Investment with Time-Varying Job-Switching Costs
}

\author{Gugyum Ha\thanks{Department of Mathematics, Sogang University, Seoul 04107, Republic of Korea 
  (\email{ggha@sogang.ac.kr}, \email{jihoonok@sogang.ac.kr}).}
\and Junkee Jeon\thanks{Department of  Applied Mathematics, Kyung Hee University, Yongin-Si 17104, Republic of Korea
  (\email{junkeejeon@khu.ac.kr}).}
\and Jihoon Ok\footnotemark[1]}

\usepackage{amsopn}

\ifpdf
\hypersetup{
  pdftitle={Optimal Consumption, Investment, and Job-Switching with Time-Varying Switching Costs},
  pdfauthor={G. Ha, J. Jeon, and J. OK}
}
\fi

\usepackage{mathrsfs}
\usepackage{amsfonts}
\usepackage{enumitem}
\usepackage{amssymb}
\usepackage{amscd}
\usepackage{epsfig}
\usepackage{graphics}
\usepackage{multirow}
\usepackage{amsmath}

\newtheorem{pr}{Problem}[section]

\newtheorem{thm}{Theorem}[section]
\newtheorem{lem}[thm]{Lemma}

\newtheorem{as}{Assumption}[section]
\newtheorem{rem}{Remark}[section]

\def\a{\alpha}
\def\b{\beta}
\def\d{\delta}
\def\D{\Delta}
\def\e{\varepsilon}
\def\g{\gamma}
\def\l{\lambda}

\def\o{\omega}
\def\p{\psi}
\def\pat{\partial}
\def\r{\rho}
\def\s{\sigma}
\def\t{\theta}

\def\z{\zeta}
\def\k{\kappa}
\def\gam{\frac{\gamma_1-\gamma}{\gamma_1}}

\begin{document}

\maketitle

\begin{abstract}
In this paper, we study the finite-horizon problem of an economic agent’s optimal consumption, investment, and job-switching decisions.  
The key new feature of our model is that the job-switching cost is time-varying.  
This extension leads to a novel mathematical characterization: the agent’s dual problem reduces to a parabolic double obstacle problem with time-dependent upper and lower obstacles.  
By employing rigorous PDE theory, we establish not only the existence and uniqueness of the solution to this double obstacle problem, but also the smoothness of the two free boundaries that emerge from it.  
Building on these results, we characterize the agent’s optimal consumption, portfolio, and job-switching strategies.
\end{abstract}

\begin{keywords}
consumption and investment, optimal switching problem, time-varying job switching costs, parabolic double obstacle, time-varying obstacles, free boundary problem
\end{keywords}

\begin{MSCcodes}
 91G80, 35R35, 49N90
\end{MSCcodes}

\section{Introduction}
With the growing diversity of job types and work arrangements, labor flexibility has become increasingly important, leading to a surge of studies on job-switching behavior within the utility maximization framework (see, e.g., \cite{an2025,JP23,Lee-et-al-2019,Qi25,Qi26,ShimJeon2025,SKS,SS14}, and references therein). However, most of these studies consider infinite-horizon problems without a mandatory retirement date, which makes them less applicable to real-world situations where retirement is inevitable.

To address this limitation, \cite{ZJ2025} investigated a finite-horizon model that incorporates both a mandatory retirement date and job-switching costs. Nevertheless, their analysis assumes that the switching cost is a constant, independent of time. Motivated by the desire to capture more realistic features of the labor market—where switching costs can vary with age, tenure, or macroeconomic conditions—we extend their framework by allowing the job-switching cost to be a time-dependent function.

The overall structure of our paper follows the same spirit as \cite{ZJ2025}. Similar to their study, we consider an agent who can choose between two types of jobs (or job categories). The job offering a higher income provides less leisure compared to the alternative with lower income. The agent faces a fixed mandatory retirement time, before which she can freely switch between the two jobs. However, each job switch incurs a time-dependent switching cost, reflecting the fact that the difficulty or burden of switching may vary over time. Following \cite{ZJ2025}, we employ the dual-martingale approach to transform the agent’s utility maximization problem into a finite-horizon pure optimal switching problem that involves only job-switching decisions. From this formulation, we further reduce the problem to a parabolic double obstacle problem, whose two free boundaries determine the agent’s optimal job-switching decisions.

However, the double obstacle problem that arises in our framework differs substantially from that in \cite{ZJ2025}. Because we model the job-switching cost as a time-varying function, both the upper and lower obstacles of our parabolic double obstacle problem also depend explicitly on time. This additional time dependence introduces significant analytical challenges in establishing the existence, regularity, and smoothness of the solution and its associated free boundaries.

Indeed, to ensure that job switching occurs within a reasonably confined region, we impose additional assumptions on the switching costs. Unlike the conditions in \cite{ZJ2025}, our framework requires restrictions involving the time derivatives of the switching cost functions. Moreover, when deriving the estimates, 
essential for establishing the free boundary regularity as in \cite{ZJ2025}, extra terms naturally emerge, preventing us from obtaining the desired estimate in a clean form. To overcome this difficulty, we partition the domain into two subregions and interpret the double obstacle problem as the conjunction of two single-obstacle problems. Each single-obstacle problem is then approximated by penalized problem, allowing us to derive the desired estimates separately.

The parabolic double obstacle problem has been extensively studied in the mathematical finance literature (see, e.g., \cite{CHEN2012928,DY09,Dai2010,Yi}). In these earlier works, both obstacles possess favorable structural properties, which make it relatively straightforward to determine the sign of the time derivative of the solution. This, in turn, facilitates the analysis of the monotonicity and smoothness of the two associated free boundaries.

However, in more recent studies, the parabolic double obstacle problems arising in financial applications do not allow the sign of the solution’s time derivative to be easily determined, which prevents one from guaranteeing the monotonicity of the free boundaries and makes the proof of smoothness highly challenging (see \cite{Han2024,ZJ2025}). To overcome this difficulty, \cite{Han2024} characterized the time points at which the monotonicity of the free boundaries changes and established smoothness on intervals where monotonicity holds. Meanwhile, \cite{ZJ2025} introduced a domain transformation technique to determine the sign of the time derivative in the transformed domain, thereby deriving the smoothness of the free boundaries.

Nevertheless, in both studies, the two obstacles are constant. In contrast, we extend the method of \cite{ZJ2025} to the case of time-varying obstacles and establish the continuity of both free boundaries under this more general and realistic setting. Moreover, instead of using the result of \cite{Friedman}, we establish the smoothness result of the free boundary as follows. We take into account a sequence of $C^1$ level curves which converges to each free boundaries, as introduced in \cite{DA}. Unlike \cite{DA}, which guaranteed the convergence of the level curves by stochastic methods, we utilize an estimate, which ensures that the level curves are equicontinuous. This implies the local Lipschitz regularity of the free boundary. Applying the boundary Harnack inequality proposed by \cite{Kukuljan2022} and \cite{TorresLatorre2024} further, the smoothness of
the free boundary can also be obtained.

The remainder of this paper is organized as follows. Section~\ref{sec:model} describes the job-switching model with time-varying switching costs. In Section~\ref{sec:OSP}, we apply the dual-martingale approach to derive a finite-horizon optimal switching problem from the agent’s utility maximization problem and obtain the associated parabolic double obstacle problem. Section~\ref{sec:double} establishes the existence and uniqueness of a strong solution to this problem. In Section 5, we analyze the analytical properties and smoothness of the two free boundaries. Finally, Section~\ref{sec:strategy} provides a brief characterization of the optimal strategies.

\section{Model}\label{sec:model}

We consider the optimal consumption, investment, and job-switching problem of an economic agent who works until the mandatory retirement time $T > 0$ in a continuous-time, complete financial market.

The financial market consists of two assets: a risk-free asset $P_0$ and a risky asset (or stock) $P_1$. Their price dynamics are given by $ {dP_{0,t}} = r{P_{0,t}}dt$ and $ {dP_{1,t}} = \mu {P_{1,t}}dt + \sigma {P_{1,t}}dB_t,$
where $r > 0$ is the risk-free interest rate, $\mu \neq r$ and $\sigma > 0$ are the expected return and volatility of the risky asset $P_1$, respectively, and $(B_t)_{t \ge 0}$ is a standard Brownian motion defined on a filtered probability space $(\Omega, \mathcal{F}, \mathbb{F}, \mathbb{P})$. The $\mathbb{P}$-augmentation of the filtration generated by the standard Brownian motion $(B_t)_{t\ge 0}$ is denoted by $\mathbb{F}:=(\mathcal{F}_t)_{t\ge 0}$.  

The agent can choose between two jobs (or job categories), denoted by $\z_0$ and $\z_1$. In job $\z_i$ $(i=0,1)$, the agent receives an income of $\e_i$ and enjoys leisure time $L_i$, where total available leisure time is $\bar{L}$. We assume that job $\z_0$ provides higher income than job $\z_1$, but requires more labor effort, i.e., $0 \le \e_1 < \e_0$ and $0 < L_0 < L_1 < \bar{L}.$

Let $\xi_t$ be an $\mathbb{F}$-adapted, finite variation process over $[0,T]$ that is left-continuous with right limits (LCRL) and takes values in $\{\z_0, \z_1\}$, representing the agent's job choice at time $t$.

The agent can switch freely between the two jobs; however, switching from job $\z_i$ to job $\z_{1-i}$ at time $t$ incurs a fixed, time-dependent, and positive cost $\phi_i(t)$, which is paid from the agent's wealth at the time of switching.

We define the job-indicator process $\eta_t := \mathbf{1}_{\{\xi_t = \z_1\}}$, and if $\pi_t$ denotes the dollar amount invested in the risky asset $P_{1}$ at time $t$, then the corresponding wealth process $W_t^{c,\pi,\xi}$ under the strategy $(c_t, \pi_t, \xi_t)$ evolves as follows: for $t\ge 0$
\begin{align}
       dW_t^{c,\pi,\xi} &= \Big(r W_t^{c,\pi,\xi} + (\mu - r)\pi_t - c_t + (\e_0 \mathbf{1}_{\{\eta_t=0\}} + \e_1 \mathbf{1}_{\{\eta_t=1\}}){\bf 1}_{\{t< T\}}\Big) dt + \sigma \pi_t dB_t \nonumber \\
        &\quad - \Big(\phi_0(t) (\Delta \eta_t)^+ + \phi_1(t) (\Delta \eta_t)^- \Big){\bf 1}_{\{t < T\}}, \quad \text{with} \quad W_0^{c,\pi,\xi} = w,
\end{align}
where $w$ is the initial wealth and $\Delta \eta_t = \eta_{t+} - \eta_t$.

The agent derives utility from consumption $c_t$ and leisure $l_t$, which is modeled using a Cobb-Douglas utility function:
\begin{equation}
    u(c_t, l_t) = \frac{1}{\alpha} \frac{(c_t^{\alpha} l_t^{1-\alpha})^{1-\g}}{1-\g}= \frac{c_t^{1 - \g_1} l_t^{\g_1 - \g}}{1 - \g_1}, \quad \g > 0, \g \neq 1,\text{ and } \g_1:=1 - \alpha (1 - \g),
\end{equation}
where $\alpha$ represents the weight on consumption, and $\g$ is the coefficient of constant relative risk aversion (CRRA).


After mandatory retirement at $T > 0$, the agent ceases working and enjoys his/her maximum leisure time $\bar{L}$, so $l_t$ is given by:
\begin{equation*}
    l_t=L_0 {\bf 1}_{\{\xi_t=\z_0,\;t\in[0,T)\}} +L_1 {\bf 1}_{\{\xi_t=\z_1,\;t\in[0,T)\}} + \bar{L}{\bf 1}_{\{t\in[T,\infty)\}}.
\end{equation*}

The agent's objective is to find the optimal consumption strategy $c_t$, investment strategy $\pi_t$, and job-switching strategy $\xi_t$ that maximize the expected utility given initial wealth $w$ and initial job $j \in \{0,1\}$:
\begin{equation}
   E(j,w; c,\pi,\xi) := \mathbb{E}\bigg[\int_0^T e^{-\b t} u(c_t,L_{\xi_t}) dt + \int_T^\infty e^{-\b t} u(c_t, \bar{L}) dt\bigg],
\end{equation}
where $\b > 0$ is the agent's subjective discount rate.

To ensure the well-definedness of the problem and the existence of a mathematically tractable solution, we impose the following regularity conditions on the job-switching cost functions $\phi_i$:
\begin{as}
For each $i = 0, 1$, the job-switching cost function $\phi_i(\cdot)$ is a positive, bounded, and continuously differentiable function on $[0, T]$.
Specifically, there exists a constant $q>0$ such that $\Vert \phi_i \Vert_{C^{1,1}([0,T])}\le q$.
The positivity of $\phi_i(t)$ ensures that switching jobs always incurs a cost. Boundedness prevents the cost from becoming unrealistically large, and smoothness (differentiability) facilitates mathematical analysis and ensures well-behaved solutions.
\end{as}

Moreover, we focus on a regular situation in which the two free boundaries
are strictly separated and both are nonempty.
For this, we assume the following.
\begin{as}\label{as2}
The switching cost functions $\phi_0(\cdot)$ and $\phi_1(\cdot)$ satisfy the following conditions:
\begin{itemize}
    \item[(i)] $\partial_t \phi_0(t) - r\phi_0(t) < -\partial_t \phi_1(t) + r\phi_1(t)$ for all $t \in [0,T]$.
    \item[(ii)] $\phi_1(0) < (\e_0 - \e_1)\frac{1 - e^{-rT}}{r}$ and $-\partial_t \phi_1(t) + r\phi_1(t) < \e_0 - \e_1$ for all $t \in [0,T]$.
\end{itemize}
\end{as}
Condition(i) will be used later in the Lemma~\ref{reg:s_1} to guarantee that the two free boundaries remain strictly separated.
Condition(ii) implies that the discounted net gain 
$e^{-rt}\big(\mathscr{E}(t)-\phi_1(t)\big)$
is strictly decreasing in $t$, 
where 
\[
\mathscr{E}(t):= (\e_0 - \e_1)\frac{1 - e^{-r(T - t)}}{r}
= \int_t^T e^{-r(s-t)}(\e_0-\e_1)\,ds
\]
denotes the total future income gain from $t$ to $T$ measured in present value at time $t$, which is obtained by switching to the high-income job $\z_0$, at time $t$.
This monotonicity assumption provides an environment for an individual to switch job from $\z_1$ to $\z_0$ (see Remark~\ref{E}).
Since $\phi_1(0)<\mathscr{E}(0)$ and $\mathscr{E}(T)=0$, 
there exists a unique $t_1 \in (0,T)$ such that 
\begin{equation}\label{eq:t1}
\begin{cases}
\phi_1(t) < \mathscr{E}(t), & t\in [0, t_1), \\
\phi_1(t_1) = \mathscr{E}(t_1), &\\
\phi_1(t) > \mathscr{E}(t), & t\in (t_1,T].
\end{cases}
\end{equation}


To describe the initial wealth, note that the wealth level is naturally bounded below by the negative of the present value of future income.
For an agent starting in the high-income job $\z_0$ can borrow up to the present value of future income, $\frac{\e_0(1 - e^{-rT})}{r}$. For an agent starting in $\z_1$, Assumption~\ref{as2} ensures that once wealth reaches $\frac{\e_1(1 - e^{-rT})}{r}$, switching to $\z_0$ is optimal. Consequently, the borrowing limit in this case is $\frac{\e_1(1 - e^{-rT})}{r} - \phi_1(t)$, reflecting both the present value of income and the switching cost.
\begin{as}
    For given initial job $\z_j$, the initial wealth $w$ satisfies 
    $$
    w>\iota(j):=-\frac{\e_0(1-e^{-rT})}{r}(1-j)+\bigg(-\frac{\e_1(1-e^{-rT})}{r}+\phi_1(0) \bigg)j
    $$
\end{as}

Moreover, to ensure that the agent’s optimization problem after the mandatory retirement time is well-defined (see \cite{M69,M71}), we impose the following condition:
\begin{as}\label{as4}
The Merton constant 
\(K_1 := r + \frac{\b - r}{\g_1} + \frac{\g_1 - 1}{\g_1^2}\frac{\t^2}{2}\)
is positive.
\end{as}
Now, we can define the agent's utility maximization problem as follows.
\begin{pr}\label{pr:main}
    Let $j\in\{0,1\}$ and $w>\iota(j)$ be given.
    Then, the agent faces the following maximization problem:
    \begin{equation}
        V(j,w):= \sup_{(c,\pi,\xi)\in{\cal A}(j,w)}E(j,w;c,\pi,\xi),
    \end{equation}
    where ${\cal A}(j,w)$ is the set of all admissible strategies $(c,\pi,\xi)$ satisfying (i) $c_t\ge 0$ and $\pi_t$ are $\mathbb{F}$-progressively measurable and satisfy $\int_0^T (c_t +\pi_t^2)dt<\infty$ a.s., (ii) $(\xi_t)_{t\in[0,T]}\in {\cal O}_j,$ where ${\cal O}_j$ is the collection of all $\mathbb{F}$-adapted, finite variation process over $[0,T]$ that is left-continuous with right limits (LCRL) starting at $j$ and takes values in $\{\z_0, \z_1\},$ (iii) the wealth $W_t^{c,\pi,\xi}$ corresponding to $(c,\pi,\xi)$ satisfies the borrowing constraint
    $ W_t^{c,\pi,\xi}\ge \{ -\frac{\e_0(1-e^{-r(T-t)})}{r} \}{\bf 1}_{\{\xi_t=\z_0\}}+ \{ (-\frac{\e_0(1-e^{-r(T-t)})}{r}+\phi_1(t) ){\bf 1}_{\{t\in[0,T-t_1)\}}+(-\frac{\e_1(1-e^{-r(T-t)})}{r}){\bf 1}_{\{t\in[T-t_1,T]\}}\}{\bf 1}_{\{\xi_t=\z_1\}}$ a.e..
\end{pr}

\section{Derivation of Finite-Horizon Optimal Switching Problem}\label{sec:OSP}

As in \cite[Proposition 3.1]{ZJ2025},  any admissible strategy $(c,\pi,\xi)\in{\cal A}(j,w)$ satisfies the budget constraint 
\begin{equation}\label{eq:budget}
   \begin{aligned}
        \mathbb{E}\bigg[&\int_0^T \Upsilon_t \Big(c_t-\e_0 (1-\eta_t)-\e_1\eta_t\Big)dt +\int_T^\infty \Upsilon_t c_t dt \\&+\sum_{0\le t \le T}\Big(\phi_0(t)\Upsilon_t\big(\D \eta_t)^+ + \phi_1(t)\Upsilon_t\big(\D \eta_t)^-\Big)\bigg]\le w,
   \end{aligned}
\end{equation}
where $\Upsilon_t$ is the stochastic discount factor(SDF) given by 
\begin{equation}
    \label{eq:SDF}
    \Upsilon_t:=e^{-(r+\frac{1}{2}\t^2)t-\t B_t}\qquad\text{with}\qquad\t:=\frac{\mu-r}{\s}.
\end{equation}

From the budget constraint \eqref{eq:budget}, we set the Lagrangian $\mathscr{L}$ as follows: for the Lagrangian multiplier $y>0$ for the budget \eqref{eq:budget}
\begin{small}
    \begin{align}
        \label{eq:Lagrangian}
        \mathscr{L}:=&\mathbb{E}\bigg[\int_0^T e^{-\b t}\Big(u(c_t,L_0)(1-\eta_t) + u(c_t,L_1)\eta_t \Big)dt + \int_T^\infty e^{-\b t} u(c_t, \bar{L}) dt\bigg]\nonumber\\
        &+y\bigg(w-\mathbb{E}\bigg[\int_0^T \Upsilon_t \Big(c_t-\e_0 (1-\eta_t)-\e_1\eta_t\Big)dt +\int_T^\infty \Upsilon_t c_t dt \\
        &\quad\qquad\qquad+\sum_{0\le t \le T}\Big(\phi_0(t)\Upsilon_t\big(\D \eta_t)^+ + \phi_1(t)\Upsilon_t\big(\D \eta_t)^-\Big)\bigg]\bigg)\nonumber\\
        \le &\mathbb{E}\bigg[\int_0^T e^{-\b t}\Big(\big(\widetilde{u}_0(Y_t^y)+\e_0 Y_t^y\big)(1-\eta_t) + \big(\widetilde{u}_1(Y_t^y)+\e_1 Y_t^y\big)\eta_t \Big)dt+\int_T^\infty e^{-\b t}\widetilde{u}_R(Y_t^y)dt\nonumber\\
        &-\sum_{0\le t \le T}e^{-\b t}Y_t^y\Big(\phi_0(t)\big(\D \eta_t)^+ + \phi_1(t)\big(\D \eta_t)^-\Big)\bigg]+yw, \nonumber
    \end{align}
\end{small}
where $Y_t^y:=ye^{\b t} \Upsilon_t,$ and 
\begin{equation}
    \begin{aligned}
        \widetilde{u}_j(y)=&\sup_{c\ge 0}\Big(u(c,L_j)-yc\Big)=\dfrac{\g_1}{1-\g_1}L_j^{\frac{\g_1-\g}{\g_1}}y^{-\frac{1-\g_1}{\g_1}}\quad\text{for } j=0,1,\\
        \widetilde{u}_R(y)=&\sup_{c\ge 0}\Big(u(c, \bar{L})-yc\Big)=\dfrac{\g_1}{1-\g_1}\bar{L}^{\frac{\g_1-\g}{\g_1}}y^{-\frac{1-\g_1}{\g_1}}.
    \end{aligned}
\end{equation}
By direct computation, it is easy to see that for all $t\ge0$
\begin{small}
$$
J_R(y):=\mathbb{E}\bigg[\int_t^\infty e^{-\b(s-t)}\widetilde{u}_R(Y_s^{t,y})ds \mid {\cal F}_t\bigg]=\frac{1}{K_1}\frac{\g_1}{1-\g_1}y^{-\frac{1-\g_1}{\g_1}}\quad\text{with}\quad Y_s^{t,y}:=ye^{\b(s-t)}\frac{\Upsilon_s}{\Upsilon_t}.
$$
\end{small}
From the Lagrangian~\eqref{eq:Lagrangian}, the dual problem is defined as follows:

\begin{pr}\label{pr:dual}
For a given $y>0$ and $j\in\{0,1\}$, we consider the following optimal switching problem:
\begin{align}
\label{dual:J}
J(j,y)
:=&\;
\sup_{(\xi_t)\in{\cal O}_j}
\Bigg\{
\mathbb{E}\Bigg[
\int_0^T e^{-\b t}
\Big(
\big(\widetilde{u}_0(Y_t^y)+\e_0 Y_t^y\big)(1-\eta_t)
+ \big(\widetilde{u}_1(Y_t^y)+\e_1 Y_t^y\big)\eta_t
\Big) dt
\nonumber\\
&\qquad
+ e^{-\b T} J_R(Y_T^y)
- \sum_{0\le t \le T} e^{-\b t} Y_t^y
\Big(
\phi_0(t)(\Delta \eta_t)^+
+ \phi_1(t)(\Delta \eta_t)^-
\Big)
\Bigg]
\Bigg\}.
\end{align}
We refer to $J(j,y)$ as the \emph{dual value function}.
\end{pr}

By a theory of optimal switching control (see \cite[Theorem 5.6]{TangYong1993}), we derive the following system of variational inequalities (VIs) for $P_0$ and $P_1$: on the domain $\Omega_T:=\{(t,y)\;:\; 0< t<T,\; 0<y<\infty\}$
\begin{align}
    \label{eq:vIs}
    \begin{cases}
        \max\{\pat_t P_0+\mathfrak{L} P_0+\widetilde{u}_0+\e_0 y, P_1-P_0 -\phi_0  y \}=0,\\
        \max\{\pat_t P_1+\mathfrak{L} P_1+\widetilde{u}_1 + \e_1 y, P_0-P_1 -\phi_1  y \}=0,\\
        P_0(T,y)=P_1(T,y)=J_R(y)\qquad\text{for all }y\in(0,\infty),
    \end{cases}
\end{align}
where the differential operator $\mathfrak{L}$ is given by $\mathfrak{L}:=\frac{\t^2}{2}y^2\pat_{yy}+(\b-r)y\pat_y -\b.$

Our goal is to find the solution pair $(P_0,P_1)$ of the VIs \eqref{eq:vIs} such that $P_j(0,y)=J(j,y)$ for all $y\in(0,\infty)$.
Let us denote $\mathcal{Q}(t,y):=\big(P_0(t,y)-P_1(t,y)\big).$
Then, $\mathcal{Q}(t,y)$ satisfies the following parabolic double obstacle problem: on the domain $\Omega_T$
\begin{equation}\label{obs:prev}
    \begin{cases}
        \pat_t \mathcal{Q} +  \mathfrak{L}\mathcal{Q}+\mathcal{U}=0\qquad\text{for } -\phi_0(t)y < \mathcal{Q}(t,y) < \phi_1(t)y,\\
        \pat_t \mathcal{Q} +  \mathfrak{L}\mathcal{Q}+\mathcal{U}\le 0\qquad\text{for }\ \mathcal{Q}(t,y)=-\phi_0(t)y  ,\\
         \pat_t \mathcal{Q} +  \mathfrak{L}\mathcal{Q}+\mathcal{U}\ge 0\qquad\text{for }\ \mathcal{Q}(t,y)=\phi_1(t)y  ,\\
         -\phi_0 y\le \mathcal{Q} \le \phi_1y\ \ \text{in}\ \  \overline{\Omega_T},\quad \mathcal{Q}(T,y)=0 \ \ \text{for all}\ \ y\in(0,\infty),
    \end{cases}
\end{equation}
where $\mathcal{U}(y):=\big(\widetilde{u}_0(y)-\widetilde{u}_1(y)\big)+(\e_0-\e_1)y.$
Then, by the following theorem, we recover the solution $(P_0, P_1)$ to the system of VIs~\eqref{eq:vIs} from the obstacle problem~\eqref{obs:prev}:
\begin{thm}\label{thm:auxiliary_problem} (\cite[Theorem~3.4]{ZJ2025})
If $\mathcal{Q} \in W^{1,2}_{p,\mathrm{loc}}(\Omega_T) \cap C(\overline{\Omega_T})$ is the unique strong solution of the obstacle problem~\eqref{obs:prev} for any $p \ge 1$,  
then there exists a unique strong solution $(P_0, P_1)$ with 
$P_0, P_1 \in W^{1,2}_{p,\mathrm{loc}}(\Omega_T) \cap C(\overline{\Omega_T})$
of the following system:
\begin{align}\label{eq:auxiliary_problem}
\begin{cases}
 -\big(\partial_t P_0  + \mathfrak{L}P_0 + \widetilde{u}_0 + \e_0 y\big)
   = \big(-\partial_t \mathcal{Q} - \mathfrak{L}\mathcal{Q} -  \mathcal{U}\big)^+,\\
 -\big(\partial_t P_1 + \mathfrak{L}P_1 + \widetilde{u}_1 + \e_1 y\big)
   = \big(-\partial_t \mathcal{Q}  - \mathfrak{L}\mathcal{Q}  - \mathcal{U}\big)^-, \\
 P_0(T,y) = P_1(T,y) = J_R(y) \ \ \text{for all}\ \ 0<y<\infty.
\end{cases}
\end{align}\end{thm}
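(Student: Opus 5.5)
The system \eqref{eq:auxiliary_problem} is decoupled once $\mathcal{Q}$ is known. Each equation is a linear, nondegenerate-in-$\log y$ parabolic Cauchy problem for $P_j$, with a given right-hand side $f_j := (\mp(\partial_t\mathcal{Q}+\mathfrak{L}\mathcal{Q}+\mathcal{U}))^{\pm}$. The plan is to solve each equation separately, transferring the regularity of $\mathcal{Q}$ to $P_0,P_1$. Then check consistency, namely $P_0-P_1=\mathcal{Q}$, so that the recovered pair is the one associated with \eqref{obs:prev}.

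\textbf{Step 1: change of variables.} Set $x=\log y$ and $\tau=T-t$. Then $\partial_t+\mathfrak{L}$ becomes a constant-coefficient operator
\[
-\partial_\tau+\tfrac{\t^2}{2}\partial_{xx}+(\b-r-\tfrac{\t^2}{2})\partial_x-\b
\]
on $(0,T)\times\mathbb{R}$, with initial datum $J_R(e^x)$. The source terms $\widetilde u_j(e^x)+\e_je^x$ are smooth with exponential growth.

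\textbf{Step 2: regularity of the sources.} Since $\mathcal{Q}\in W^{1,2}_{p,\mathrm{loc}}$ for every $p$, the function $g:=\partial_t\mathcal{Q}+\mathfrak{L}\mathcal{Q}+\mathcal{U}$ lies in $L^p_{\mathrm{loc}}$. Hence $g^\pm\in L^p_{\mathrm{loc}}$. By the obstacle structure, $g^+$ is supported on $\{\mathcal{Q}=\phi_1y\}$ and $g^-$ on $\{\mathcal{Q}=-\phi_0y\}$. On these contact sets $\mathcal{Q}$ coincides a.e. with the obstacle, so $g=\partial_t(\pm\phi y)+\mathfrak{L}(\pm\phi y)+\mathcal{U}$ a.e. there (using $D^2\mathcal{Q}=D^2\psi$ a.e. on $\{\mathcal{Q}=\psi\}$). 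Consequently $g^\pm$ are bounded by explicit functions of polynomial growth in $y$ and $y^{-(1-\g_1)/\g_1}$, which gives global control with weights.

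\textbf{Step 3: existence.} Solve each equation by Duhamel's formula with the Gaussian kernel. Equivalently, solve on bounded domains $(0,T)\times(-n,n)$ with $W^{1,2}_p$ estimates, then pass to the limit $n\to\infty$ using local $L^p$ estimates and a growth barrier built from $C(e^{kx}+e^{-kx})e^{\lambda\tau}$. Interior Calderón–Zygmund $W^{1,2}_p$ estimates give $P_j\in W^{1,2}_{p,\mathrm{loc}}$. Continuity up to $\tau=0$ follows from continuity of $J_R$ and local integrability of the sources. Alternatively, write $P_0$ directly as the expectation
\[
\mathbb{E}\Big[\int_t^T e^{-\b(s-t)}\big(\widetilde u_0+\e_0Y_s+g^+\big)(Y_s)\,ds+e^{-\b(T-t)}J_R(Y_T)\Big],
\]
and $P_1$ analogously. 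Subtracting the two equations, $P_0-P_1$ and $\mathcal{Q}$ then solve the same linear problem: one checks that $g^+-g^-=g$ and that the source difference is $\mathcal{U}$, with zero terminal data.

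\textbf{Step 4: uniqueness.} Uniqueness within the class of functions of controlled growth comes from the maximum principle for strong solutions (Aleksandrov–Bakelman–Pucci / Bony type for $W^{1,2}_{p,\mathrm{loc}}$ with $p$ large) on the unbounded domain, via a Phragmén–Lindelöf argument with the barrier $e^{\lambda\tau}\cosh(kx)$. The same argument identifies $P_0-P_1=\mathcal{Q}$.

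The main obstacle is the unbounded domain together with the singular behaviour as $y\to0$. Uniqueness and the passage from local to global require growth bounds on $P_j$ and on $\mathcal{Q}$ near $y=0$ and $y=\infty$. I would obtain these from Step 2 and the bound $|\mathcal{Q}|\le q y$, and choose the barrier exponent $k$ larger than the growth exponents $1$ and $(1-\g_1)/\g_1$. Since the statement is quoted from \cite[Theorem~3.4]{ZJ2025}, and time-dependence of $\phi_i$ only enters through the bounded $C^{1}$ terms $\partial_t\phi_i\,y$ in Step 2, the argument there carries over verbatim.
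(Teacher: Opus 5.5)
Your route is sound and consistent with how the paper handles this result. The paper gives no argument of its own; it simply imports \cite[Theorem~3.4]{ZJ2025}. Your observation that the time dependence of $\phi_i$ enters only through the bounded term $\phi_i'(t)y$ is exactly what justifies that import. That term appears when $g:=\partial_t\mathcal{Q}+\mathfrak{L}\mathcal{Q}+\mathcal{U}$ is identified a.e.\ on the contact sets. You are also right that uniqueness can only hold within a growth class (Tychonoff), which the statement leaves implicit. With that class, decoupling, exponential-in-$x$ bounds on $g^\pm$, a linear solve, and the Phragm\'en--Lindel\"of uniqueness argument prove the statement.

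Your sign bookkeeping needs fixing, though. Since $(-g)^+=g^-$ and $(-g)^-=g^+$, the $P_0$ equation carries the source $g^-$ and the $P_1$ equation carries $g^+$. The source $g^-$ is supported on $\{\mathcal{Q}=-\phi_0y\}$, which is the region where one switches out of job $0$. Your $f_1=(g)^-$ and your Feynman--Kac formula for $P_0$ with $g^+$ have these reversed. As written, subtracting gives $-(\partial_t+\mathfrak{L})(P_0-P_1)=\mathcal{U}+g$, whereas $-(\partial_t+\mathfrak{L})\mathcal{Q}=\mathcal{U}-g$. So your identification $P_0-P_1=\mathcal{Q}$ fails unless $g=0$ a.e.

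With the correct assignment, the source difference is $\mathcal{U}+g^--g^+=\mathcal{U}-g$. Your uniqueness argument then closes the identification, and it applies because $|\mathcal{Q}|\le qy$. This matters downstream, since the paper's claim that $(P_0,P_1)$ satisfies \eqref{eq:vIs} rests on $P_0-P_1=\mathcal{Q}$ and on $P_0$'s extra source living in its own switching region.

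You can also save one solve. Construct only $P_1$ and set $P_0:=P_1+\mathcal{Q}$. A direct computation gives $-(\partial_tP_0+\mathfrak{L}P_0+\widetilde{u}_0+\varepsilon_0y)=g^+-g=g^-$ and $P_0(T,\cdot)=J_R$. Existence for $P_0$ and the identity $P_0-P_1=\mathcal{Q}$ then come for free, and uniqueness is used only once.
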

We can easily verify that the unique strong solution $(P_0, P_1)$ of~\eqref{eq:auxiliary_problem} satisfies the system of variational inequalities~\eqref{eq:vIs}. Thus, we now focus on analyzing the double obstacle problem~\eqref{obs:prev}.


\section{A Parabolic Double Obstacle Problem with Time-varying Obstacles}\label{sec:double}

In this section, we investigate the existence and uniqueness of the solution to the double obstacle problem~\eqref{obs:prev}, and derive estimates for the derivatives of the solution.

By applying the change of variables by setting\ \ $\tau=T-t$, \ $x=\ln y$\ \ and
\[
v(\tau,x):=e^{-x}\mathcal{Q}(T-\tau,e^x),
\]
\begin{equation}\label{U}
    U(x):=e^{-x}\mathcal{U}(e^x)=(\e_0-\e_1)-\frac{\g_1}{1-\g_1}(L_1^{\frac{\g_1-\g}{\g_1}}-L_0^{\tfrac{\g_1-\g}{\g_1}})e^{-\frac{1}{\g_1}x},
\end{equation}
\begin{equation}\label{psi}
\p_i(\tau):=\phi_i(T-\tau), \ \ i=0,1,\ \ \text{with}\ \ \|\p_0\|_{C^{1,1}([0,T])}+\|\p_1\|_{C^{1,1}([0,T])}=:q<\infty, 
\end{equation}
we can transform the above problem as, on $\mathcal{D}_T:=(0,T)\times \mathbb{R}$,
\begin{align}\label{obs}
    \begin{cases}
        \pat_\tau v - \mathcal{L}v = U 
        \quad \text{for }\ -\p_0(\tau)<v(\tau,x)<\p_1(\tau),
        \\
        \pat_\tau v - \mathcal{L}v \ge U 
        \quad \text{for } \ v(\tau,x)=-\p_0(\tau),
        \\
        \pat_\tau v - \mathcal{L}v \le U
        \quad \text{for }\  v(\tau,x)=\p_1(\tau),
        \\
        -\p_0 \le v \le \p_1 \ \ \text{in }\ \overline{\mathcal{D}_T},
        \qquad v(0,x)=0 \quad \text{for\ \ all }\ x\in\mathbb{R},
    \end{cases}
\end{align}
where 
\[
\mathcal{L}:=\frac{\t^2}{2}\pat_{xx}+(\b-r+\frac{\t^2}{2})\pat_x -r.
\]
We note that Assumption~\ref{as2} together with \eqref{eq:t1} implies the following properties for $\psi_0$ and $\psi_1$:
\begin{equation}\label{psi12}
-\partial_\tau \psi_0(\tau) - r\psi_0(\tau)  <  \partial_\tau \psi_1(\tau)+r\psi_1(\tau) < \e_0 - \e_1
   \quad \text{for all }\  \tau\in [0,T],
   \end{equation}
   and, letting $\tau_1:= T-t_1$,
\begin{equation}\label{eq:tau1}
\begin{cases}
\psi_1(\tau) > \mathcal{E}(\tau), & \tau\in [0, \tau_1), \\
\psi_1(\tau_1) = \mathcal{E}(\tau_1) ,&\\
\psi_1(\tau) < \mathcal{E}(\tau) & \tau\in (\tau_1,T], 
\end{cases}
\quad \text{where }\ \mathcal{E}(\tau):= \mathscr{E}(T-\tau)=(\e_0-\e_1)\frac{1-e^{-r\tau}}{r} .
\end{equation}
These will guarantee that the two free boundaries of the above problem exist, and remain separated.
\begin{rem}\label{E}
If $e^{-rt}(\mathscr{E}-\phi_1)$ were not decreasing,
there would exist a point where
$\pat_\tau \psi_1+r\psi_1\ge \e_0-\e_1$.
If a contact occurs at such a point, then \eqref{obs} yields
$\pat_\tau \psi_1+r\psi_1\le U(x)< \e_0-\e_1$,
a contradiction.
Hence Assumption~\ref{as2} allows the appearance of the region $\{v=\psi_1\}$.
\end{rem}

\subsection{Existence and Uniqueness}
To establish the existence and uniqueness of a solution to the double obstacle problem~\eqref{obs},
we approximate it by a sequence of double obstacle problems posed on bounded domains $\mathcal{D}_T^n:=\{(\tau,x):0<\tau<T,\ -n<x<n\}$ for each $n\in\mathbb{N}$.
For the lateral boundary conditions on $\partial\mathcal{D}_T^n\cap\{x=\pm n\}$,
we introduce the functions $\rho_0$ and $\rho_1$ defined by
    \begin{equation}\label{rho_01}
        \r_0(\tau)
        :=\max\{-\mathcal{G}(\tau),-\p_0(\tau)\}
\quad
\text{and}
\quad 
    \r_1(\tau)
    :=\min\{\mathcal{E}(\tau),\p_1(\tau)\}
    \end{equation}
for $\tau\in[0,T]$.
where  $\mathcal{E}(\tau)$ is given in \eqref{eq:tau1} and 
\[
\mathcal{G}(\tau):=\frac{\g_1}{1-\g_1}(L_1^\gam-L_0^\gam) e^{\frac{n}{\g_1}}\tau 
\]
The functions $\rho_0$ and $\rho_1$ are constructed to satisfy the compatibility condition, while $\mathcal{E}$ and $\mathcal{G}$ are designed so that $\rho_0 \le v_n \le \rho_1$ follows from the comparison principle.

With the above definitions, we formulate the following double obstacle problem: on $\mathcal{D}_T^n$,
\begin{equation}\label{obs:bdd}
    \begin{cases}
        \pat_\tau v_n  -\mathcal{L}v_n =U
        \quad \text{for }\ -\p_0(\tau)<v_n(\tau,x)<\p_1(\tau),
        \\
        \pat_\tau v_n -\mathcal{L}v_n\ge U
        \quad \text{for }\  v_n(\tau,x)=-\p_0(\tau),
        \\
        \pat_\tau v_n -\mathcal{L}v_n\le U
        \quad \text{for }\  v_n(\tau,x)=\p_1(\tau),
        \\
        -\p_0\le v_n\le \p_1 \ \ \text{in }\ \overline{\mathcal{D}_T^n}, \qquad v_n(0,x)=0 \ \ \text{for all }\ x\in[-n,n]
        ,
        \\
        v_n(\tau,-n)=\r_0(\tau) \quad \text{and} \quad
        v_n(\tau,n)=\r_1(\tau) \quad \text{for all }\ \tau\in[0,T].
    \end{cases}
\end{equation}
To obtain the existence of solution $v_n$ of \eqref{obs:bdd}, we approximate the above problem by the following penalized problem: for $\e >0$,
\begin{equation}\label{obs:pen}
    \begin{cases}
        \pat_\tau v_{n,\e}-\mathcal{L}v_{n,\e}=U+\b_{0,\e}(v_{n,\e}+\p_0)+\b_{1,\e}(v_{n,\e}-\p_1)
        \quad \text{in }\ \mathcal{D}_T^n,
        \\
        v_{n,\e}(0,x)=0 \quad \text{for all }\ x\in[-n,n],
        \\
        v_{n,\e}(\tau,-n)=\r_0(\tau)\quad \text{and} \quad
        v_{n,\e}(\tau,n)=\r_1(\tau) \quad \text{for all }\ \tau\in[0,T],
    \end{cases}
\end{equation}
where $\b_{0,\e},\b_{1,\e}\in C^\infty(\mathbb{R})$ are called the penalty functions, which satisfy
\begin{equation*}
    \begin{cases}
        \b_{0,\e}(0)=\frac{\g_1}{1-\g_1}
        (L_1^{\gam}-L_0^{\gam})e^{\frac{n}{\g_1}}+q,
        \\
        \b_{0,\e}(\l)\ge 0 \quad \text{for all }\ \l\in\mathbb{R},
        \\
        \b_{0,\e}(\l)=0 \quad \text{if }\ \l\ge \e,
        \\
        \b_{0,\e}'(\l)\le 0 \quad \text{for all }\ \l\in\mathbb{R},
    \end{cases}
    \qquad
    \begin{cases}
        \b_{1,\e}(0)=-(\e_0-\e_1)-q,
        \\
        \b_{1,\e}(\l)\le 0\quad \text{for all }\ \l\in\mathbb{R},
        \\
        \b_{1,\e}(\l)=0 \quad \text{if }\ \l\le -\e,
        \\
        \b_{1,\e}'(\l)\le 0 \quad \text{for all }\ \l\in\mathbb{R},
    \end{cases}
\end{equation*}
where the constant $q$ is given in \eqref{psi}.

Under this construction, we begin by investigating the existence, uniqueness and the estimate for the solution $v_{n,\e}$ of \eqref{obs:pen} and consequently obtain the existence and uniqueness of the solution $v_n$ of \eqref{obs:bdd}.
\begin{lem}\label{ineq:pen}
    For each small $\e>0$ and $n\in\mathbb{N}$, there exists a unique solution $v_{n,\e}\in W^{1,2}_p(\mathcal{D}_T^n)\cap C(\overline{\mathcal{D}_T^n})$, $1<p<\infty$ to the problem \eqref{obs:pen}.
In particular, $v_{n,\e}$ satisfies the following estimate:
    \begin{equation}\label{ineq:v,rho}
        -\p_0\le \r_0 \le v_{n,\e}\le \r_1 \le \p_1 \quad 
        \text{in }\ \mathcal{D}_T^n.
    \end{equation}
\end{lem}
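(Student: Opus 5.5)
The plan is to first prove existence and uniqueness for the semilinear problem \eqref{obs:pen} with $\e$ and $n$ fixed, and then obtain \eqref{ineq:v,rho} by comparison, showing that $\r_1$ is a supersolution and $\r_0$ a subsolution of \eqref{obs:pen}.

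\textbf{Step 1: compatibility.} Since $\mathcal{G}(0)=\mathcal{E}(0)=0$ and $\p_0,\p_1>0$, we have $\r_0(0)=\max\{0,-\p_0(0)\}=0$ and $\r_1(0)=\min\{0,\p_1(0)\}=0$. So the lateral data agree with the zero initial datum at the corners. Moreover $\r_0$ and $\r_1$ are Lipschitz, being a max or min of $C^{1}$ functions.

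\textbf{Step 2: existence and uniqueness for fixed $\e,n$.} For fixed $\e$, the nonlinearity
\[
F(v,\tau):=\b_{0,\e}(v+\p_0(\tau))+\b_{1,\e}(v-\p_1(\tau))
\]
is smooth, bounded and globally Lipschitz in $v$. This holds because each $\b_{i,\e}$ is monotone, vanishes on a half-line, and has a finite value at $0$. Also $F$ is nonincreasing in $v$.
\begin{itemize}
\item I will lift the lateral data to an extension (for instance linear interpolation in $x$ between $\r_0$ and $\r_1$) and reduce to homogeneous boundary conditions.
\item For $w\in C(\overline{\mathcal{D}_T^n})$, solve the linear problem $\pat_\tau v-\mathcal{L}v=U+F(w,\tau)$ by standard $W^{1,2}_p$ theory on the bounded domain $\mathcal{D}_T^n$. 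Here $U$ is bounded there, with $|e^{-x/\g_1}|\le e^{n/\g_1}$.
\item The resulting map $w\mapsto v$ is compact into $C(\overline{\mathcal{D}_T^n})$ and has an a priori bound independent of $w$, since $F$ is bounded. Schauder's fixed point theorem then gives a solution $v_{n,\e}\in W^{1,2}_p\cap C$.
\item Alternatively, one can use Banach's fixed point theorem on short time intervals and iterate.
\item Uniqueness follows from the maximum principle. The difference $z$ of two solutions satisfies $\pat_\tau z-\mathcal{L}z=c(\tau,x)z$ with $c\le0$ by monotonicity of $F$, and $z$ vanishes on the parabolic boundary, so $z\equiv 0$.
\end{itemize}

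\textbf{Step 3: $\r_1$ is a supersolution.} Since a minimum of two supersolutions is a (weak/$W^{1,2}_p$-sense) supersolution, it suffices to check $\mathcal{E}$ and $\p_1$ separately in the region where each realizes the minimum. Note $\mathcal{L}$ acts on $x$-independent functions as $-r$. I use $U(x)<\e_0-\e_1$, which holds because the coefficient of $e^{-x/\g_1}$ in \eqref{U} is positive, as used in Remark~\ref{E}.
\begin{itemize}
\item \emph{For $\mathcal{E}$:} we have $\pat_\tau\mathcal{E}+r\mathcal{E}=\e_0-\e_1\ge U$. Also $\mathcal{E}+\p_0\ge\min\p_0\ge\e$ for $\e$ small, so $\b_{0,\e}$ vanishes there, and $-\b_{1,\e}\ge 0$. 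Hence $\pat_\tau\mathcal{E}-\mathcal{L}\mathcal{E}-U-F(\mathcal{E},\tau)\ge0$.
\item \emph{For $\p_1$ (where $\r_1=\p_1$):} we have $-\b_{1,\e}(0)=\e_0-\e_1+q$ and again $\b_{0,\e}=0$. The expression becomes $\pat_\tau\p_1+r\p_1+q+(\e_0-\e_1)-U$, which is positive since $|\pat_\tau\p_1|\le q$ and $\p_1>0$.
\end{itemize}

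\textbf{Step 4: $\r_0$ is a subsolution.} By the same reasoning, a maximum of two subsolutions is a subsolution.
\begin{itemize}
\item \emph{For $-\mathcal{G}$:} the key inequality is $-\pat_\tau\mathcal{G}-r\mathcal{G}\le -\frac{\g_1}{1-\g_1}(L_1^{\gam}-L_0^{\gam})e^{n/\g_1}\le U(x)$ for $x\ge -n$. This is exactly why $\mathcal{G}$ carries the factor $e^{n/\g_1}$. Also $\b_{1,\e}(-\mathcal{G}-\p_1)=0$ for $\e$ small, since $-\mathcal{G}-\p_1\le -\min\p_1$.
\item \emph{For $-\p_0$ (where $\r_0=-\p_0$):} $\b_{0,\e}(0)$ was chosen precisely to dominate $\frac{\g_1}{1-\g_1}(L_1^{\gam}-L_0^{\gam})e^{n/\g_1}+|\pat_\tau\p_0|$.
\end{itemize}

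\textbf{Step 5: conclusion.} The lateral and initial data match, and $\r_0\le\r_1$ holds because $-\r_0\le\p_0$ and the remaining comparisons follow from the signs. Comparison for the monotone semilinear operator (as in the uniqueness argument) then gives $\r_0\le v_{n,\e}\le\r_1$. The outer inequalities $-\p_0\le\r_0$ and $\r_1\le\p_1$ are immediate from the definitions in \eqref{rho_01}.

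\textbf{Main obstacle.} I expect the hardest part to be the rigorous handling of the kinks in $\r_0$ and $\r_1$: justifying that a min/max of classical super/subsolutions is a valid comparison function for $W^{1,2}_p$ solutions. I would do this by applying the comparison principle separately against each of the two smooth functions, $v_{n,\e}\le\mathcal{E}$ and $v_{n,\e}\le\p_1$, on the whole domain. Since each is individually a supersolution everywhere by the computations above, their minimum bounds $v_{n,\e}$. The boundary inequalities hold because $\r_1\le$ each of them.
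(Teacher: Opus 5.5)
Your route is the paper's. You use a Schauder fixed-point argument for existence, comparison for the monotone semilinear operator for uniqueness, and comparison with $\rho_0$ as subsolution and $\rho_1$ as supersolution for \eqref{ineq:v,rho}. Your computations for $\mathcal{E}$, $\psi_1$, $-\mathcal{G}$, $-\psi_0$ match the paper's, including the roles of the factor $e^{n/\gamma_1}$ in $\mathcal{G}$ and of the prescribed values $\beta_{0,\varepsilon}(0)$, $\beta_{1,\varepsilon}(0)$.

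One step, however, is not correct as justified. In Step 2 you claim $F$ is bounded and globally Lipschitz because each $\beta_{i,\varepsilon}$ is monotone, vanishes on a half-line and is finite at $0$. These properties only bound $\beta_{0,\varepsilon}$ on $[0,\infty)$ and $\beta_{1,\varepsilon}$ on $(-\infty,0]$. Nothing in the definition prevents $\beta_{0,\varepsilon}(\lambda)\to+\infty$ as $\lambda\to-\infty$, or $\beta_{1,\varepsilon}(\lambda)\to-\infty$ as $\lambda\to+\infty$; typical penalty functions are unbounded there. Smoothness gives only local Lipschitz continuity. So neither the $w$-independent bound for your fixed-point map nor the Banach-contraction alternative is available as stated.

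The fix uses your own Steps 3--4; this is also what the paper's appeal to the fixed-point theorem implicitly relies on. The a priori bound must come from the barriers, not from boundedness of $F$.
\begin{itemize}
\item Choose constants $m<0<M$ such that $\mathcal{E}$, $\psi_1$, $-\mathcal{G}$, $-\psi_0$ all take values in $[m,M]$ on $[0,T]$.
\item Replace $F(v,\tau)$ by $\widetilde F(v,\tau):=F(\max\{m,\min\{v,M\}\},\tau)$. This is bounded, globally Lipschitz and still nonincreasing in $v$, so your Schauder argument yields a solution $\widetilde v$ of the truncated problem.
\item The barriers are unaffected by the truncation, so comparison gives $\rho_0\le\widetilde v\le\rho_1$. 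Hence the truncation is inactive along $\widetilde v$, and $\widetilde v$ solves \eqref{obs:pen}.
\end{itemize}
Your uniqueness argument is fine as written. Two solutions are continuous on the compact set $\overline{\mathcal{D}_T^n}$, so the coefficient $c$ is bounded by local Lipschitz continuity of $F$.

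Finally, the kinks you flag as the main obstacle are harmless. $\rho_0$ and $\rho_1$ depend only on $\tau$ and are Lipschitz, hence belong to $W^{1,2}_p(\mathcal{D}_T^n)$, and the paper simply verifies the inequalities a.e.\ on each piece. Your alternative of comparing $v_{n,\varepsilon}$ separately with $\mathcal{E}$, $\psi_1$, $-\mathcal{G}$, $-\psi_0$ is also valid, since each is a global super- or subsolution with admissible data. For $-\psi_0$ you also need $\beta_{1,\varepsilon}(-\psi_0-\psi_1)=0$, which holds once $\varepsilon<\min(\psi_0+\psi_1)$.
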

\begin{proof}    
    The existence of the solution $v_{n,\e}$ follows from the Schauder fixed point theorem \cite[Theorem 8.3]{Lieberman}, while the uniqueness of $v_{n,\e}$ is guaranteed by the comparison principle \cite[Theorem 1.1]{YangYan2008}.
    
    We prove the estimate $\eqref{ineq:v,rho}$ using the comparison principle \cite[Theorem 1.1]{YangYan2008}.
Note that the functions $\r_0$ and $\r_1$ in \eqref{rho_01} satisfy
    \begin{align*}
        \pat_\tau \r_0 -\mathcal{L}\r_0 
            &-\b_{0,\e}(\r_0+\p_0)-\b_{1,\e}(\r_0-\p_1)
            \\
            & = \bigg[-\tfrac{\g_1}{1-\g_1}
            (L_1^{\gam}-L_0^{\gam}) e^{\frac{n}{\g_1}}(1+r\tau)-\b_{0,\e}(-\mathcal{G}+\p_0)\bigg]
            \mathbf{1}_{\{-\mathcal{G}> -\p_0\}}
            \\
            &\quad -(\p'_0+r\p_0+\b_{0,\e}(0))
            \mathbf{1}_{\{-\mathcal{G} \le -\p_0\}}
            \\
            & \le -\tfrac{\g_1}{1-\g_1}(L_1^{\gam}-L_0^{\gam}) e^{\frac{n}{\g_1}}(1+r\tau)\mathbf{1}_{\{-\mathcal{G}> -\p_0\}}
            \\
            &\quad \left[-\tfrac{\g_1}{1-\g_1}(L_1^{\gam}-L_0^{\gam}) e^{\frac{n}{\g_1}}-\p'_0-q\right]
            \mathbf{1}_{\{-\mathcal{G}> -\p_0\}}
            \\
            & \le -\tfrac{\g_1}{1-\g_1}(L_1^{\gam}-L_0^{\gam}) e^{\frac{n}{\g_1}} \le U
    \end{align*}
    and
        \begin{align*}
        \pat_\tau \r_1 -\mathcal{L}\r_1 
            &-\b_{0,\e}(\r_1+\p_0)-\b_{1,\e}(\r_1-\p_1)
            \\
            & \ge \Big[(\e_0-\e_1)-\b_{1,\e}(\mathcal{E}-\p_1)\Big]\mathbf{1}_{\{\mathcal{E}\le\p_1\}}
            +\{\p'_1+r\p_1-\b_{1,\e}(0)\}
            \mathbf{1}_{\{\mathcal{E}>\p_1\}}
            \\
            &=\Big[(\e_0-\e_1)-\b_{1,\e}(\mathcal{E}-\p_1)\Big]\mathbf{1}_{\{\mathcal{E}\le\p_1\}}
            +\{\p'_1+r\p_1+\e_0-\e_1+q\}
            \mathbf{1}_{\{\mathcal{E}>\p_1\}}
            \\
            & \ge \e_0-\e_1 \ge U
    \end{align*}
    in $\mathcal{D}_T^n$, provided that $\e < \min_{\tau\in[0,T]}\{\psi_0(\tau)+\psi_1(\tau)\}$.
    Thus, $\r_0$ satisfies
    \begin{align*}
        \begin{cases}
            \pat_\tau \r_0 -\mathcal{L}\r_0 
            -\b_{0,\e}(\r_0+\p_0)-\b_{1,\e}(\r_0-\p_1)
            \le U \quad &\text{in }\ \mathcal{D}_T^n,
            \\
            \r_0(0)\le 0 = v_{n,\e}(0,x)
            \quad &\text{for all }\ x\in[-n,n],
            \\
            \r_0(\tau)\le v_{n,\e}(\tau,n)
            \quad \text{and } \quad
            \r_0(\tau) = v_{n,\e}(\tau,-n)
            \quad &\text{for all }\ \tau\in[0,T].
        \end{cases}
    \end{align*}
    Similarly, $\r_1$ satisfies
    \begin{align*}
        \begin{cases}
            \pat_\tau \r_1 -\mathcal{L}\r_1 
            -\b_{0,\e}(\r_1+\p_0)-\b_{1,\e}(\r_1-\p_1)
            \ge U \quad &\text{in }\ \mathcal{D}_T^n,
            \\
            \r_1(0)\ge 0 = v_{n,\e}(0,x)
            \quad &\text{for all }\ x\in[-n,n],
            \\
            \r_1(\tau)= v_{n,\e}(\tau,n)
            \quad \text{and} \quad
            \r_1(\tau) \ge v_{n,\e}(\tau,-n)
            \quad &\text{for all }\ \tau\in[0,T].
        \end{cases}
    \end{align*}
    Therefore, by the comparison principle for the non-linear equation \cite[Theorem 1.1]{YangYan2008}, we obtain the inequality \eqref{ineq:v,rho}.
\end{proof}

 Now, we prove the existence and the uniqueness of the solution to the variational inequality \eqref{obs}.
\begin{lem}\label{lem:v_existence}
    For each $n\in\mathbb{N}$, there exists a unique solution $v_n\in W^{1,2}_p(\mathcal{D}_T^n)\cap C(\overline{\mathcal{D}_T^n})$ to \eqref{obs:bdd}. Furthermore, there exists a unique solution $v\in W^{1,2}_p(\mathcal{D}_T)\cap C(\overline{\mathcal{D}_T})$ to \eqref{obs} for $1 < p < \infty$.
\end{lem}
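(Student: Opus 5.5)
The plan is the standard penalization argument: first let $\e\to0$ on the bounded domain $\mathcal{D}_T^n$, then let $n\to\infty$ using interior estimates.

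\textbf{Step 1: bounds on the penalty terms, uniform in $\e$.} For fixed $n$, I will show that
\[
0\le \b_{0,\e}(v_{n,\e}+\p_0)\le C_n,
\qquad
-C_n\le \b_{1,\e}(v_{n,\e}-\p_1)\le 0,
\qquad
C_n:=\tfrac{\g_1}{1-\g_1}(L_1^{\gam}-L_0^{\gam})e^{\frac{n}{\g_1}}+\e_0-\e_1+q .
\]
For the lower penalty, let $w:=v_{n,\e}+\p_0$ and suppose $\b_{0,\e}(w)$ attains a positive maximum at an interior point. Since $\b_{0,\e}$ is nonincreasing, this point is a minimum of $w$. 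Two facts hold there: by \eqref{ineq:v,rho} we have $\b_{1,\e}(v_{n,\e}-\p_1)=0$, because $v_{n,\e}\le\r_1$ and $\r_1<\p_1-\e$ near $\{v_{n,\e}\approx-\p_0\}$ for small $\e$; and $\pat_\tau w-\mathcal{L}w\le r w\le0$ roughly. Combining these with the equation gives
\[
\b_{0,\e}(w)\le -U+\p_0'+r\p_0+\text{const}\le C_n,
\]
using $|U|\le C$ on $[-n,n]$ and $\|\p_0\|_{C^1}\le q$. If the maximum is instead attained on the parabolic boundary, then $w\ge0$ there by \eqref{ineq:v,rho}. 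Hence $\b_{0,\e}(w)\le\b_{0,\e}(0)$, which is exactly the choice of $\b_{0,\e}(0)$. The bound for $\b_{1,\e}$ is symmetric.

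\textbf{Step 2: passing to the limit $\e\to0$.} By Step 1 the right-hand side of \eqref{obs:pen} is bounded in $L^\infty$. The boundary data $\r_0,\r_1$ are Lipschitz, and the compatibility conditions $\r_i(0)=0$ hold. The $W^{1,2}_p$ estimates (e.g.\ Lieberman) therefore give $\|v_{n,\e}\|_{W^{1,2}_p(\mathcal{D}_T^n)}\le C_n$ uniformly in $\e$. Since $W^{1,2}_p$ embeds compactly into $C^{\a,\a/2}$ for large $p$, I extract a subsequence with $v_{n,\e}\to v_n$ weakly in $W^{1,2}_p$ and uniformly. The uniform bounds force $-\p_0\le v_n\le\p_1$. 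On the open set $\{-\p_0<v_n<\p_1\}$ both penalty terms vanish for small $\e$, so the equation holds there. The signs $\b_{0,\e}\ge0$ and $\b_{1,\e}\le0$ give the two variational inequalities on the contact sets, and $\b_{0,\e}\b_{1,\e}=0$ separates them.

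\textbf{Step 3: uniqueness of $v_n$.} Take two solutions and let $\Omega^+=\{v_n^1>v_n^2\}$. On $\Omega^+$ we have $v_n^1>-\p_0$ and $v_n^2<\p_1$, so
\[
\pat_\tau v_n^1-\mathcal{L}v_n^1\le U\le \pat_\tau v_n^2-\mathcal{L}v_n^2 .
\]
The weak maximum principle for $W^{1,2}_p$ functions then gives $\Omega^+=\emptyset$.

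\textbf{Step 4: the unbounded domain.} Here the constant $C_n$ blows up, which is the main obstacle. For a compact $K\subset\mathcal{D}_T^{m}$ and $n>m+1$, I need bounds on $\b_{0,\e}$ and $\b_{1,\e}$ that are independent of $n$. I will obtain them by repeating the argument of Step 1 with cutoff functions, or more simply by noting that for $v_n$ itself
\[
0\le \pat_\tau v_n-\mathcal{L}v_n-U\le -U+\p_0'+r\p_0 \ \text{ a.e. on } \{v_n=-\p_0\},
\]
with the symmetric bound on $\{v_n=\p_1\}$. This holds because $D^2v_n=0$ a.e. on contact sets, so $|\pat_\tau v_n-\mathcal{L}v_n|\le C(m)$ on $\mathcal{D}_T^{m+1}$. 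Combined with $|v_n|\le q$, the interior $W^{1,2}_p$ estimates yield $\|v_n\|_{W^{1,2}_p(\mathcal{D}_T^m)}\le C(m)$. A diagonal argument then gives $v_n\to v$ locally uniformly, and $v$ solves \eqref{obs}. I read the statement $v\in W^{1,2}_p$ in the local sense.

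Uniqueness of $v$ follows from the argument of Step 3 on $\mathcal{D}_T$, using the maximum principle for bounded functions on unbounded domains. The function $v$ is bounded since $|v|\le q$, and the coefficients of $\mathcal{L}$ are constant.
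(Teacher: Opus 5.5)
Your proposal is correct and follows essentially the same route as the paper's sketched proof. You penalize on $\mathcal{D}_T^n$ with $\varepsilon$-uniform $W^{1,2}_p$ bounds, then obtain $n$-uniform interior bounds because $\partial_\tau v_n-\mathcal{L}v_n$ is a.e.\ equal to $U$, $-\psi_0'-r\psi_0$ or $\psi_1'+r\psi_1$; the sign in your Step 4 display should therefore be $-U-\psi_0'-r\psi_0$. You finish with a diagonal argument and uniqueness by comparison, reading $W^{1,2}_p(\mathcal{D}_T)$ locally, which is what the paper's argument actually delivers.

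One simplification: the interior case of Step 1 is loosely argued and can be dropped. Lemma~\ref{ineq:pen}, via \eqref{ineq:v,rho}, already gives $v_{n,\varepsilon}+\psi_0\ge0$ and $v_{n,\varepsilon}-\psi_1\le0$ on all of $\mathcal{D}_T^n$. Monotonicity alone then yields $0\le\beta_{0,\varepsilon}(v_{n,\varepsilon}+\psi_0)\le\beta_{0,\varepsilon}(0)$ and $\beta_{1,\varepsilon}(0)\le\beta_{1,\varepsilon}(v_{n,\varepsilon}-\psi_1)\le0$. This is exactly what the paper means by ``monotonicity of the penalty terms''.
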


\begin{proof}
Since the existence of such solutions follows from standard arguments in the literature (see, e.g., \cite{HaJeonOk2025Chooser} for further details), we only provide a brief sketch of the proof. 
    
    The existence of $v_n$ is established by a penalization method. Let $v_{n,\varepsilon}$ be the solution to the regularized problem \eqref{obs:pen} with penalty terms $\beta_{0,\varepsilon}$ and $\beta_{1,\varepsilon}$. Using the $W^{1,2}_p$-estimates \cite[Theorem 9.1, p.341]{ladyzhenskaya}, $\Vert v_{n,\varepsilon} \Vert_{W^{1,2}_p(\mathcal{D}_T^n)}$ is uniformly bounded with respect to $\varepsilon$ due to the monotonicity of the penalty terms. Passing to the limit $\varepsilon \to 0$, we obtain $v_n$ satisfying $-\psi_0 \le v_n \le \psi_1$ and the corresponding variational inequalities.

    For the solution $v$ on the unbounded domain $\mathcal{D}_T$, we apply interior $W^{1,2}_p$-estimates \cite[p.355]{ladyzhenskaya}. For any fixed $N \in \mathbb{N}$ and $n \ge 2N$, $\Vert v_n \Vert_{W^{1,2}_p(\mathcal{D}_T^N)}$ is uniformly bounded since $\|v_n\|_{L^p(\mathcal{D}^{2N}_T)}$ and $\|\partial_\tau v_n - \mathcal{L}v_n\|_{L^p(\mathcal{D}^{2N}_T)}$ are bounded independently of $n$. A diagonal argument then yields a subsequence $\{v_n\}$ converging to $v \in W^{1,2}_{p,\text{loc}}(\mathcal{D}_T)$ weakly and uniformly on compact sets. Taking $n \to \infty$ ensures that $v$ satisfies \eqref{obs}. Uniqueness follows from the comparison principle \cite[Theorem 1.2]{YangYan2008}.
\end{proof}

In the following lemmas, we establish several estimates for the derivatives of the solution $v$.
We first show that $v(\tau,\cdot)$ is monotone increasing for every $\tau\in(0,T)$.
This property allows us to parametrize each free boundary, $\partial\{v<\p_1\}$ and $\partial\{v>-\p_0\}$, with respect to~$\tau$.
We then derive an estimate for $\partial_\tau v + (-1)^i \p_i'$, $i=0,1$, which will play an essential role in obtaining a key estimate later on.

\begin{lem}\label{estimate(1)}
    Let $v$ be the solution to \eqref{obs}. Then,  $\pat_x v \ge 0$ in $\mathcal{D}_T$. In particular, 
\[\pat_x v >0 \quad \text{in }\ \{-\p_0<v<\p_1\}.\]
\end{lem}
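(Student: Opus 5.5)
We prove the statement at the level of the penalized problems \eqref{obs:pen} and then pass to the limits $\e\to0$ and $n\to\infty$. Differentiating \eqref{obs:pen} in $x$ gives, for $w:=\pat_x v_{n,\e}$,
\[
\pat_\tau w-\mathcal{L}w-\big(\b_{0,\e}'(v_{n,\e}+\p_0)+\b_{1,\e}'(v_{n,\e}-\p_1)\big)w = U'(x)\quad\text{in }\ \mathcal{D}_T^n .
\]
Here the obstacles $\p_0,\p_1$ do not depend on $x$, so their time dependence causes no trouble. From \eqref{U}, $U'(x)=\frac{1}{1-\g_1}(L_1^{\gam}-L_0^{\gam})e^{-x/\g_1}$. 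Since $L_1>L_0$, the factor $\frac{1}{1-\g_1}(L_1^{\gam}-L_0^{\gam})$ is positive in both cases $\g_1<1$ and $\g_1>1$, because the signs of $\gam$ and of $1-\g_1$ agree. Hence $U'>0$. The zeroth-order coefficient $-\b_{0,\e}'-\b_{1,\e}'$ is nonnegative because both penalty functions are nonincreasing, so the maximum principle applies.

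Regularity needs care: $v_{n,\e}$ is only in $W^{1,2}_p$. Since the penalty functions are smooth, standard bootstrapping gives interior smoothness, so $w\in W^{1,2}_{p,\mathrm{loc}}$. Alternatively, one can compare difference quotients $v_{n,\e}(\tau,x+h)-v_{n,\e}(\tau,x)$ on $\mathcal{D}^n_T\cap\{x<n-h\}$ using the comparison principle \cite[Theorem 1.1]{YangYan2008}. I prefer the difference-quotient route, since it avoids boundary regularity of $w$. Set $\tilde v(\tau,x):=v_{n,\e}(\tau,x+h)$. Because $U(x+h)\ge U(x)$, $\tilde v$ is a supersolution of the same penalized equation on $(0,T)\times(-n,n-h)$. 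On the boundary we have:
\begin{itemize}
\item[(a)] at $\tau=0$, both functions vanish;
\item[(b)] at $x=-n$, $\tilde v=v_{n,\e}(\tau,-n+h)\ge\r_0(\tau)=v_{n,\e}(\tau,-n)$ by \eqref{ineq:v,rho};
\item[(c)] at $x=n-h$, $\tilde v=\r_1(\tau)\ge v_{n,\e}(\tau,n-h)$, again by \eqref{ineq:v,rho}.
\end{itemize}
This is exactly where Lemma~\ref{ineq:pen} is used. Comparison then gives $\tilde v\ge v_{n,\e}$, i.e.\ $v_{n,\e}$ is nondecreasing in $x$.

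Monotonicity in $x$ is preserved under the uniform convergence $v_{n,\e}\to v_n\to v$ on compact sets from Lemma~\ref{lem:v_existence}. Since $v\in W^{1,2}_{p,\mathrm{loc}}$ with $p$ large, $\pat_x v$ is continuous, so $\pat_x v\ge0$ in $\mathcal{D}_T$.

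For the strict inequality, on the open set $\mathcal{N}:=\{-\p_0<v<\p_1\}$ we have $\pat_\tau v-\mathcal{L}v=U$. Here $U$ is smooth, so $v$ is smooth in $\mathcal{N}$ by interior Schauder theory, and
\[
\pat_\tau(\pat_x v)-\mathcal{L}(\pat_x v)=U'>0 .
\]
So $\pat_x v\ge0$ is a supersolution there. If $\pat_x v(\tau_0,x_0)=0$ at some point of $\mathcal{N}$, this is an interior minimum. At such a point $\pat_{xx}(\pat_x v)\ge0$, $\pat_x(\pat_x v)=0$, and $\pat_\tau(\pat_x v)\le0$ (a minimum in $x$, and in $\tau$ from below). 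Hence
\[
\pat_\tau(\pat_x v)-\mathcal{L}(\pat_x v)\le -r\,\pat_x v=0<U'(x_0),
\]
a contradiction. One can also invoke the strong maximum principle directly. Therefore $\pat_x v>0$ in $\mathcal{N}$.

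The main obstacle is the boundary comparison in the first step. The lateral data $\r_0,\r_1$ must be ordered correctly against interior values of $v_{n,\e}$, and \eqref{ineq:v,rho} is what guarantees this.
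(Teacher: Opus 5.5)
Your proof is correct, and it differs from the paper's in the monotonicity step for the penalized solutions.

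\textbf{Paper's route.} The paper bootstraps $v_{n,\e}$ to interior smoothness, differentiates \eqref{obs:pen} in $x$, and applies the maximum principle to $\pat_x v_{n,\e}$ itself. It reads off the boundary data from \eqref{ineq:v,rho}: since the solution attains its lower bound $\r_0$ at $x=-n$ and its upper bound $\r_1$ at $x=n$, one gets $\pat_x v_{n,\e}(\tau,\pm n)\ge 0$, and also $\pat_x v_{n,\e}(0,\cdot)=0$. It then passes to a weak limit of $\pat_x v_{n,\e}$.

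\textbf{Your route.} You compare the translate $v_{n,\e}(\cdot,\cdot+h)$ with $v_{n,\e}$ via the nonlinear comparison principle \cite[Theorem 1.1]{YangYan2008}. This is legitimate because $\mathcal{L}$ has constant coefficients, $\p_0,\p_1$ do not depend on $x$, $U$ is increasing, and the penalty terms are nonincreasing. Here \eqref{ineq:v,rho} enters in the same way as in the paper, but through function values rather than one-sided derivatives.

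\textbf{What each buys.} Your version needs only the comparison principle already used in Lemma~\ref{ineq:pen}. It requires no information on $\pat_x v_{n,\e}$ at the lateral boundary or at the corners $\{0\}\times\{\pm n\}$. The paper's version tacitly needs $\pat_x v_{n,\e}$ to be continuous up to the parabolic boundary. That does follow from the global $W^{1,2}_p$ bound with $p>3$, but not from the interior Schauder bootstrap the paper cites. In exchange, the paper's route produces the linear equation for the $x$-derivative, which is reused later for $\pat_x\omega_{n,\e}$ in the barrier $\Psi$ of Lemma~\ref{estimate(3)}.

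\textbf{Strict positivity.} Your pointwise interior-minimum argument is a more elementary version of the paper's strong-minimum-principle argument; in both, the contradiction comes from $U'>0$. There is one harmless sign slip: the zeroth-order part of $-\mathcal{L}$ is $+r\,\pat_x v$, not $-r\,\pat_x v$. It vanishes at the point in question, so the conclusion is unaffected.
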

\begin{proof}
    Let us consider the solution $v_{n,\e}$ of the problem \eqref{obs:pen}.
    Recall that $v_{n,\e}\in W^{1,2}_p(\mathcal{D}_T^n)$, $1<p<\infty$. By the Sobolev embedding theorem \cite[80p, Lemma 3.3]{ladyzhenskaya}, $v_{n,\e} \in C^{\frac{\alpha}{2}, \alpha}(\overline{\mathcal{D}_T^n})$, $\alpha \in (0,1)$.
    Thus, the right hand side of the equation in \eqref{obs:pen} is of class $C^{\frac{\a}{2},\a}$. By the Schauder theory, $v_{n,\e}\in C^{1+\frac{\a}{2},2+\a}_{\rm loc}(\mathcal{D}_T^n)$. Again, the right hand side becomes $C^{1+\frac{\a}{2},2+\a}$ and by the Schauder theory, we obtain that $v_{n,\e}\in C^{2+\frac{\a}{2},4+\a}_{\mathrm{loc}}(\mathcal{D}_T^n)$. This allows us to differentiate the equation in \eqref{obs:pen} with respect to $x$. Thus, differentiating the equation together with the inequality \eqref{ineq:v,rho} yields that $\partial_x v_{n,\e}$ satisfies
    \begin{equation}\label{partial x}
        \begin{cases}
        \pat_\tau (\pat_x v_{n,\e})
        -\mathcal{L}(\pat_x v_{n,\e})
        -\{\b'_{0,\e}(\cdots)+\b'_{1,\e}(\cdots)\}\pat_x v_{n,\e}
        =U'
        \quad \text{in }\ \mathcal{D}_T^n,
        \\
        \pat_x v_{n,\e}(\tau,-n)\ge 0
        \quad \text{and} \quad
        \pat_x v_{n,\e}(\tau,n)\ge 0 \quad \text{for }\ \text{all }\ \tau\in[0,T],
        \\
        \pat_x v_{n,\e}(0,x)=0
        \quad \text{for }\ \text{all}\ \ x\in[-n,n].
    \end{cases}
    \end{equation}
    Since $U'(x)>0$ for all $x\in \mathbb{R}$, we deduce by the maximum principle that $\pat_x v_{n,\e}\ge 0$ in $\mathcal{D}_T^n$.
    For all small $\e>0$ and large $n\in \mathbb{N}$,
    since $\partial_x v$ is a weak limit of a subsequence of $\{\partial_x v_{n,\e}\}$,
    we obtain that $\partial_x v\ge 0$.

    To prove the strict positivity of $\pat_x v$ in the region $\{-\p_0<v<\p_1\}$ we fix an arbitrary point
    $(\tau_0,x_0)\in\{-\p_0<v<\p_1\}$.
    Since $U$ is a smooth function, by the Schauder theory, there exists a neighborhood $N:=B_\d(\tau_0,x_0)$ contained in the set $\{-\p_0<v<\p_1\}$ for some $\d>0$ such that $v$ is smooth in $N$. 
    Then, $v$ satisfies $\pat_\tau v -\mathcal{L}v=U$ in $N$ and
    by differentiating the equation with respect to $x$, we have 
    \[\pat_\tau (\pat_x v) -\mathcal{L}(\pat_x v) = U' >0
    \quad \text{in }\ N'=B_{\d/2}(\tau_0,x_0).\]
    If $\pat_x v(\tau_0,x_0)=0$, we deduce by the strong minimum principle that $\pat_x v\equiv 0$ in $N'$, which is a contradiction. Hence $\pat_x v(\tau_0,x_0)>0$. Since the point $(\tau_0,x_0)$ is arbitrary, we conclude $\pat_x v>0$ in $\{-\p_0<v<\p_1\}$.
    \end{proof}

\begin{lem}\label{estimate(2)}
Let $v$ be the solution to \eqref{obs}. Then, for each $i=0,1$, the following estimate holds for a.e. $(\tau,x)\in\mathcal{D}_T$:
\begin{equation*}
 -\tfrac{\gamma_1}{1-\gamma_1}
   (L_1^{\gam}-L_0^\gam)e^{K_1(T-\tau)-\frac{x}{\gamma_1}}
 - 2q
 \le
 \partial_\tau v(\tau,x) + (-1)^i \psi_i'(\tau)
 \le
 (\e_0 - \e_1)e^{-r\tau} + 2q,
\end{equation*}
where the constants $q$ and $K_1$ are defined in 
Assumption~\ref{as4} and \eqref{psi}, respectively.
\end{lem}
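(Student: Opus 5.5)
We argue at the level of the penalized problems \eqref{obs:pen}, obtain $\e$- and $n$-independent bounds, and then pass to the limit. Schauder theory applies as in the proof of Lemma~\ref{estimate(1)} (using that $\p_i\in C^{1,1}$), so $v_{n,\e}$ is smooth enough to differentiate the equation in $\tau$. Set $w:=\pat_\tau v_{n,\e}$. Then
\[
\pat_\tau w-\mathcal{L}w-\{\b'_{0,\e}(\cdot)+\b'_{1,\e}(\cdot)\}w=\b'_{0,\e}(\cdot)\,\p_0'-\b'_{1,\e}(\cdot)\,\p_1' ,
\]
with arguments $v_{n,\e}+\p_0$ and $v_{n,\e}-\p_1$ respectively. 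To exploit the sign $\b'_{i,\e}\le0$, the natural quantities are $w_i:=w+(-1)^i\p_i'$; for instance $w_0=\pat_\tau(v_{n,\e}+\p_0)$ is exactly the time derivative of the argument of $\b_{0,\e}$. Writing the equation for $w_i$, the cross term reduces to $\b'_{j,\e}(\cdot)(\p_0'+\p_1')$ for the other index $j$, together with $\p_i''$ (bounded by $q$) and $r\p_i'$ coming from $-\mathcal{L}$.

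The main step is a comparison with explicit barriers. For the upper bound, take $\overline{W}:=(\e_0-\e_1)e^{-r\tau}+2q$. For the lower bound, take $\underline{W}:=-\frac{\g_1}{1-\g_1}(L_1^{\gam}-L_0^{\gam})e^{K_1(T-\tau)-\frac{x}{\g_1}}-2q$. The exponent is chosen so that $e^{-x/\g_1}$ is a generalized eigenfunction of $\mathcal{L}$, and the factor $e^{K_1(T-\tau)}$, with $K_1>0$ by Assumption~\ref{as4}, makes $\underline{W}$ a subsolution of the linearized operator. We must check three things for each barrier:
\begin{itemize}[leftmargin=*]
\item the differential inequality, where the zeroth-order term $-(\b'_{0,\e}+\b'_{1,\e})\ge0$ helps because the barriers have the right sign;
\item the initial comparison;
\item the lateral comparison at $x=\pm n$.
\end{itemize}
At $x=\pm n$ we have $w=\r_0'$ or $\r_1'$. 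The bounds follow from $\mathcal{E}'=(\e_0-\e_1)e^{-r\tau}$, $|\p_i'|\le q$, and $\mathcal{G}'=\frac{\g_1}{1-\g_1}(L_1^{\gam}-L_0^{\gam})e^{n/\g_1}$, which is dominated by the barrier at $x=-n$ since $e^{K_1(T-\tau)}\ge1$. The extra $2q$ absorbs the $\p_i'$ shifts. Because the $\b'$ cross terms have no sign, the comparison may need to be done separately for $w_0$ and $w_1$, treating each as the solution of a single-obstacle-type problem on the region where the other penalty vanishes. Since $\b_{1,\e}=0$ wherever $v_{n,\e}\le\p_1-\e$, and $\p_0+\p_1$ exceeds $\e$, the supports of $\b_{0,\e}$ and $\b_{1,\e}$ are disjoint for small $\e$, so on the support of $\b'_{j,\e}$ the cross term can be rewritten and absorbed.

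The main obstacle is the initial layer at $\tau=0$. There $v_{n,\e}(0,\cdot)=0$, so $w(0,x)$ must be read off from the equation: $w(0,x)=U(x)+\b_{0,\e}(\p_0(0))+\b_{1,\e}(-\p_1(0))=U(x)$ for small $\e$. We then need
\[
\underline{W}(0,x)\le U(x)\pm\p_i'(0)\le\overline{W}(0,x).
\]
The upper inequality holds because $U\le\e_0-\e_1$. The lower one holds because $U\ge-\frac{\g_1}{1-\g_1}(L_1^{\gam}-L_0^{\gam})e^{-x/\g_1}$ and $e^{K_1T}\ge1$. If the compatibility fails near corners, I would first mollify the data, or use the weak formulation of the maximum principle for $W^{1,2}_p$ solutions. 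Finally, the bound $\underline{W}\le w_i\le\overline{W}$ holds uniformly in $\e$ and $n$. Since $\pat_\tau v$ is a weak $L^p_{\rm loc}$-limit of $\pat_\tau v_{n,\e}$, and weak limits preserve pointwise a.e. bounds, the estimate follows for a.e. $(\tau,x)\in\mathcal{D}_T$.
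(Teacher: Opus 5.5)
You have the same overall architecture as the paper: differentiate the penalized problem in $\tau$, read off $\partial_\tau v_{n,\varepsilon}(0,x)=U(x)$ from the equation, compare with explicit barriers using the lateral data $\rho_0',\rho_1'$, then pass to the weak limit. There is, however, a genuine gap: the barriers you chose do not satisfy the interior differential inequality, and that is the decisive step. In the equation for $w_0=\partial_\tau(v_{n,\varepsilon}+\psi_0)$ the source term is $\psi_0''+r\psi_0'-\beta_{1,\varepsilon}'(\cdot)(\psi_0'+\psi_1')$. For $\overline W=(\varepsilon_0-\varepsilon_1)e^{-r\tau}+2q$ you get
\[
\partial_\tau\overline W-\mathcal{L}\overline W-\{\beta_{0,\varepsilon}'(\cdot)+\beta_{1,\varepsilon}'(\cdot)\}\overline W
=2rq-\{\beta_{0,\varepsilon}'(\cdot)+\beta_{1,\varepsilon}'(\cdot)\}\overline W .
\]
Wherever both penalties are inactive (e.g. where $-\psi_0+\varepsilon<v_{n,\varepsilon}<\psi_1-\varepsilon$, which happens at every time level), the zeroth-order term vanishes. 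There you would need $2rq\ge\psi_0''+r\psi_0'$. Nothing in the hypotheses stops $\psi_0''$ from being close to $q$ at a time where $\psi_0'=0$, so this fails as soon as $r<1/2$. The same defect hits $\underline W$, where you would need $\psi_0''\ge-2rq-r\psi_0'$, and also $w_1$. With $\tau$-independent offsets, the best you can absorb is an offset that degenerates like $q/r$, not the stated $2q$.

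The missing idea is to build $\psi_i'$ into the barrier, so that its time derivative cancels $\psi_i''$ exactly. The paper compares $w_0$ with
\[
\overline v=(\varepsilon_0-\varepsilon_1)e^{-r\tau}+\psi_0'+q,\qquad
\underline v=-\tfrac{\gamma_1}{1-\gamma_1}\bigl(L_1^{\frac{\gamma_1-\gamma}{\gamma_1}}-L_0^{\frac{\gamma_1-\gamma}{\gamma_1}}\bigr)e^{K_1(T-\tau)-\frac{x}{\gamma_1}}+\psi_0'-q .
\]
For these, $\partial_\tau\overline v-\mathcal{L}\overline v=\psi_0''+r\psi_0'+rq$. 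The $\beta'$ terms are absorbed simply because $\beta_{i,\varepsilon}'\le 0$ and $|\psi_i'|\le q$, so no disjointness of the penalty supports is needed. Equivalently, you can compare $w=\partial_\tau v_{n,\varepsilon}$ itself with $(\varepsilon_0-\varepsilon_1)e^{-r\tau}+q$ and with the exponential term minus $q$, using $|\beta_{0,\varepsilon}'\psi_0'-\beta_{1,\varepsilon}'\psi_1'|\le-(\beta_{0,\varepsilon}'+\beta_{1,\varepsilon}')q$. Adding $(-1)^i\psi_i'$ afterwards gives exactly the constant $2q$, and with this change the rest of your argument goes through. A minor point: the exponential part of $\underline W$ solves $\partial_\tau w-\mathcal{L}w=0$ exactly for any $K_1$. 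The sign of $K_1$ is used only in the initial and lateral comparisons, not to make it a subsolution.
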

    \begin{proof}
    As described in the proof of the previous lemma, we can eventually see that $v_{n,\e}$ is infinitely differentiable in $\mathcal{D}_T^n$. Therefore, differentiating the equation in \eqref{obs:pen} with respect to $\tau$ yields the following.
    \[\pat_{\tau\tau} v_{n,\e}
        -\mathcal{L}\pat_\tau v_{n,\e}
        -\b'_{0,\e}(\cdots)
        (\pat_\tau v_{n,\e}+\p'_0)
        -\b'_{1,\e}(\cdots)
        (\pat_\tau v_{n,\e}-\p'_1)=0
        \quad \text{in }\ \mathcal{D}_T^n.\]
        Since $v_{n,\e}(0,\cdot) = 0$, it follows by the result in \cite[Theorem 19, p.321]{Friedman} that $v_{n,\e}$ is smooth up to the initial boundary. 
        This implies that for each $x\in[-n,n]$,
    \begin{align*}
        \pat_\tau v_{n,\e}(0,x)
        &=\mathcal{L}v_{n,\e}(0,x)
        +\b_{0,\e}(v_{n,\e}(0,x)+\p_0(0))
        +\b_{1,\e}(v_{n,\e}(0,x)-\p_1(0))
        +U(x)
        \\
        &=\b_{0,\e}(\p_0(0))
        +\b_{1,\e}(-\p_1(0))
        +U(x) =U(x)
    \end{align*}
    for  sufficiently small  $\e<\min\{\p_0(0),\p_1(0)\}$.
    
    Combining all, we find that 
    $\pat_\tau (v_{n,\e}+\p_0)$ satisfies
    \begin{equation}\label{partial tau}
        \begin{cases}
        \pat_{\tau\tau} (v_{n,\e}+\p_0)
        -\mathcal{L}
        \pat_\tau (v_{n,\e}+\p_0)
        -\{\b'_{0,\e}(\cdots)+\b'_{1,\e}(\cdots)\}
        \pat_\tau (v_{n,\e}+\p_0)
        \\
        =\p''_0+r\p'_0 
        -\b'_{1,\e}(\cdots)
        \big( \p'_0+\p'_1 \big)
        \quad \text{in }\ \mathcal{D}_T^n,
        \\
        \pat_\tau (v_{n,\e}+\p_0)(0,x)=U(x)+\p'_0(0) \quad \text{for }\ \text{all}\ \ x\in[-n,n],
        \\
        \pat_\tau(v_{n,\e}+\p_0)(\tau,-n)
        =\pat_\tau \r_0(\tau)+\p'_0(\tau) \quad \text{and} 
        \\
        \pat_\tau (v_{n,\e}+\p_0)(\tau,n)
        =\pat_\tau \r_1(\tau)+\p'_0(\tau)\quad 
        \text{for }\ \text{all }\ \tau\in[0,T].

        \end{cases}
    \end{equation}
       Define $\overline{v}(\tau):=
    (\e_0-\e_1)e^{-r\tau}+\p'_0(\tau)+q>0$. Then $\overline{v}$ satisfies that, using $\b_{0,\e}',\b_{1,\e}'\le 0$, 
    \begin{align*}
        &\pat_\tau \overline{v} -\mathcal{L}\overline{v}
        -\b'_{0,\e}(\cdots)\overline{v}-\b'_{1,\e}(\cdots)\overline{v}
        \\
        &= \p''_0+r(\p'_0+q)-\{\b'_{0,\e}(\cdots)+\b'_{1,\e}(\cdots)\}(\p'_0+q)
        \\
        & \ge\p''_0+r\p'_0 
        -\b'_{1,\e}(\cdots)(\p'_0+\p'_1)
    \end{align*}
    in $\mathcal{D}_T^n$.    
    Moreover, on the parabolic boundary of $\mathcal{D}^n_T$, direct computations yield that
     \[
        \overline{v}(0,x) \ge U(x)+\p'_0(0) 
        \quad \text{for all }\ x\in[-n,n], 
\]
     \[
             \overline{v}(\tau, -n) \ge \partial_\tau \rho_0(\tau) + \p'_0(\tau)
   \ \ \text{and}\ \ 
        \overline{v}(\tau, n)\ge \partial_\tau \r_1(\tau) + \p'_0(\tau)
        \quad \text{for all } \ \tau\in[0,T].
 \]
    Thus, by the comparison principle \cite[Theorem 1.1]{YangYan2008}, we deduce that
    \[\pat_\tau(v_{n,\e}+\p_0)\le \overline{v}
    =(\e_0-\e_1)e^{-r\tau}+\p'_0+q\quad \text{in }\  \mathcal{D}_T^n.\]
In a similar way, we also find that $\pat_\tau(v_{n,\e}-\p_1)$ satisfies 
    \begin{equation}\label{tau,psi,0}
        \begin{cases}
        \pat_{\tau\tau} (v_{n,\e}-\p_1)
        -\mathcal{L}
        \pat_\tau (v_{n,\e}-\p_1)
        -\{\b'_{0,\e}(\cdots)+\b'_{1,\e}(\cdots)\}
        \pat_\tau (v_{n,\e}-\p_1)
        \\
        =-\p''_1-r\p'_1 
        -\b'_{0,\e}(\cdots)
        \big( -\p'_0-\p'_1 \big)
        \quad \text{in }\ \mathcal{D}_T^n,
        \\
        \pat_\tau(v_{n,\e}-\p_1)(\tau,-n)
        =\pat_\tau \r_0(\tau)-\p'_1(\tau) \quad \text{and} 
        \\
        \pat_\tau (v_{n,\e}-\p_1)(\tau,n)
        =\pat_\tau \r_1(\tau)-\p'_1(\tau)
        \quad \text{for }\ \text{all }\ \tau\in[0,T],
        \\
        \pat_\tau (v_{n,\e}-\p_1)(0,x)=U(x)-\p'_1(0) \quad \text{for }\ \text{all}\ \ x\in[-n,n].
        \end{cases}
    \end{equation}
Hence, by applying the comparison principle again,   we obtain $\pat_\tau (v_{n,\e}-\p_1)\le (\e_0-\e_1)e^{-r\tau}-\p'_1+q$ .
    Therefore, recalling the definition of $q$ in \eqref{psi}, we conclude that
    \[\pat_\tau v_{n,\e}+(-1)^i\p'_i
    \le (\e_0-\e_1)e^{-r\tau}+2q, \quad i=0,1
    \quad \text{in }\ \mathcal{D}_T^n.\]
    
   We next prove the first inequality in  the lemma. Define \[\underline{v}(\tau,x):=-\tfrac{\g_1}{1-\g_1}(L_1^{\gam}-L_0^{\gam})e^{K_1(T-\tau)-\frac{1}{\g_1}x}
    +\p'_0(\tau)-q.\]
    Then, since $-\mathcal{L}e^{-\frac{1}{\g_1}x}=K_1e^{-\frac{1}{\g_1}x}$, $\underline{v}$ satisfies that in $\mathcal{D}_T$,
    \begin{align*}
        &\pat_\tau\underline{v} -\mathcal{L}\underline{v}
        -\b'_{0,\e}(\cdots)\underline{v}-\b'_{1,\e}(\cdots)\underline{v}
        \\
        &\le  \p''_0+r(-q+\p'_0)
        -\{\b'_{0,\e}(\cdots)+\b'_{1,\e}(\cdots)\}(-q+\p'_0)
        \\
        & \le \p''_0+r\p'_0
        -\b'_{1,\e}(\cdots)(\p'_0+\p'_1)
    \end{align*}
    since  $\b_{0,\e}'\le 0$ and $\b_{1,\e}'\le 0$.
    Moreover, on the parabolic boundary of $\mathcal{D}^n_T$, direct computations yield that
     \[
        \underline{v}(0,x) \le U(x)+\p'_0(0) 
        \quad \text{for all }\ x\in[-n,n], 
\]
     \[
             \underline{v}(\tau, -n) \le \partial_\tau \rho_0(\tau) + \p'_0(\tau)
   \ \ \text{and}\ \ 
        \underline{v}(\tau, n)\le \partial_\tau \r_1(\tau) + \p'_0(\tau)
        \quad \text{for all } \ \tau\in[0,T].
 \]
    Thus, by the comparison principle \cite[Theorem 1.1]{YangYan2008}, we deduce that
    \[-\tfrac{\g_1}{1-\g_1}(L_1^{\gam}-L_0^{\gam})e^{K_1(T-\tau)-\frac{1}{\g_1}x} +\p'_0-q
    \le \pat_\tau(v_{n,\e}+\p_0)\quad \text{in }\  \mathcal{D}_T^n.\] 
    Furthermore, by following the same argument, we can see that $\pat_\tau(v_{n,\e}-\p_1)$ satisfies
    \[-\tfrac{\g_1}{1-\g_1}(L_1^{\gam}-L_0^{\gam})e^{K_1(T-\tau)-\frac{1}{\g_1}x} -\p'_1-q
    \le \pat_\tau(v_{n,\e}-\p_1).\] 
    Therefore, we conclude
    \[ -\tfrac{\g_1}{1-\g_1}(L_1^{\gam}-L_0^{\gam})e^{K_1(T-\tau)-\frac{1}{\g_1}x}-2q\le \pat_\tau v_{n,\e}+(-1)^i\p'_i
    \quad i=0,1\ \text{ in }\ \mathcal{D}_T^n.\]
    
    Finally, from the conclusion that for $i=0,1$,
    \[-\tfrac{\g_1}{1-\g_1}(L_1^{\gam}-L_0^{\gam})e^{K_1(T-\tau)-\frac{1}{\g_1}x} -2q
    \le \pat_\tau v_{n,\e}+(-1)^i\p'_i
    \le (\e_0 - \e_1)e^{-r\tau}+2q,\]
    weak convergence of $\pat_\tau v_{n,\e}$ to $\pat_\tau v$ as $\e\to 0^+$ and $n\to \infty$ yields the result.
\end{proof}

\section{Free boundary analysis}
\subsection{Existence and properties of the free boundaries}
In this subsection, we verify that both switching regions
$\{v=-\psi_0\}$ and $\{v=\psi_1\}$ are nonempty, ensuring the
existence of the corresponding free boundaries
$\partial\{v=-\psi_0\}$ and $\partial\{v=\psi_1\}$.
We also identify the regions in the domain $\mathcal{D}_T$, where the
switching is expected to occur and analyze the geometric structure of
the contact sets.

To this end, we construct suitable auxiliary functions that solve
appropriate variational inequalities respectively, and apply the comparison principle
\cite[Theorem 1.2]{YangYan2008} to deduce that the switching regions are nonempty.
This analysis allows us to localize the contact sets and to describe
their boundaries through suitable max–min characterizations of the solution.

We begin by defining the switching regions $\mathcal{SR}_0$ and
$\mathcal{SR}_1$ as follows:
\begin{equation}
        \mathcal{SR}_0:=\{(\tau,x)\in\mathcal{D}_T 
        : v(\tau,x) = -\p_0(\tau) \},
\end{equation}
\begin{equation}
        \mathcal{SR}_1:=\{(\tau,x)\in\mathcal{D}_T 
        : v(\tau,x) = \p_1(\tau) \}.
\end{equation}
Furthermore, define the curves $\Gamma_0(\tau)$ and $\Gamma_1(\tau)$ by
    \[\Gamma_0(\tau):=\g_1\ln\bigg( \frac{\g_1}{1-\g_1} \frac{L_1^{\gam}-L_0^{\gam}}{\e_0-\e_1+\pat_\tau\p_0(\tau) +r\p_0(\tau)} \bigg), \quad \tau\in [0,T],\]
    \[\Gamma_1(\tau):=\g_1\ln\bigg( \frac{\g_1}{1-\g_1} \frac{L_1^{\gam}-L_0^{\gam}}{\e_0-\e_1-\pat_\tau \p_1(\tau)-r\p_1(\tau)} \bigg),
    \quad \tau\in[\tau_1,T].\]
    Note that by \eqref{psi12}, $\Gamma_0$ and $\Gamma_1$ are well-defined, and $\Gamma_1>\Gamma_0$.
\begin{lem}\label{reg:s_1}   
Define the regions $\mathcal{D}_0$ and $\mathcal{D}_1$ by
\[
\mathcal{D}_0 := \{(\tau,x)\in(0,T)\times\mathbb{R} : x \le \Gamma_0(\tau)\},
\qquad
\mathcal{D}_1 := \{(\tau,x)\in(\tau_1,T)\times\mathbb{R} : x \ge \Gamma_1(\tau)\},
\]
where the constant $\tau_1$ is given in \eqref{eq:tau1}.
Then, $\mathcal{SR}_0\subset \mathcal{D}_0$ and $\mathcal{SR}_1\subset\mathcal{D}_1$.
\end{lem}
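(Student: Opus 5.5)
The plan is to read off necessary conditions from the variational inequality \eqref{obs} on each contact set, using that $v\in W^{1,2}_{p,\mathrm{loc}}$. On a contact set the derivatives of $v$ agree a.e. with those of the obstacle, and the obstacle depends only on $\tau$.

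\textbf{The set $\mathcal{SR}_0$.} On $\mathcal{SR}_0=\{v=-\p_0\}$ we have $v+\p_0=0$. A standard property of Sobolev functions gives
\[
\pat_\tau(v+\p_0)=0,\qquad \pat_x v=0,\qquad \pat_{xx}v=0 \quad\text{a.e. on } \mathcal{SR}_0 .
\]
Hence a.e. on $\mathcal{SR}_0$,
\[
\pat_\tau v-\mathcal{L}v=-\p_0'(\tau)-r\p_0(\tau).
\]
The second line of \eqref{obs} then yields $-\p_0'-r\p_0\ge U(x)$. By \eqref{U} this reads
\[
(\e_0-\e_1)+\p_0'+r\p_0\le \tfrac{\g_1}{1-\g_1}\big(L_1^{\gam}-L_0^{\gam}\big)e^{-x/\g_1}.
\]
The left-hand side is positive by \eqref{psi12}, since $-\p_0'-r\p_0<\e_0-\e_1$. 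The coefficient $\tfrac{\g_1}{1-\g_1}(L_1^{\gam}-L_0^{\gam})$ is positive because $U'>0$, as used in Lemma~\ref{estimate(1)}. Taking logarithms therefore gives $x\le\Gamma_0(\tau)$.

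This holds only a.e. on $\mathcal{SR}_0$. To pass to every point, I would use that $\mathcal{SR}_0$ is a superlevel-type set: by Lemma~\ref{estimate(1)}, $v(\tau,\cdot)$ is nondecreasing, so if $(\tau,x)\in\mathcal{SR}_0$ then $(\tau,x')\in\mathcal{SR}_0$ for all $x'\le x$. So $\mathcal{SR}_0$ is the region lying left of a curve. Suppose some point of $\mathcal{SR}_0$ had $x>\Gamma_0(\tau)$. By continuity of $v$, $\p_0$ and $\Gamma_0$, I would then look for a set of positive measure inside $\mathcal{SR}_0\cap\{x>\Gamma_0\}$, which contradicts the a.e. statement. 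The clean route is a contradiction with a strong maximum principle: assume $(\tau_0,x_0)\in\mathcal{SR}_0$ with $x_0>\Gamma_0(\tau_0)$. In a small neighborhood where $x>\Gamma_0$ we have $U>-\p_0'-r\p_0$. The function $w=v+\p_0\ge0$ then satisfies $\pat_\tau w-\mathcal{L}w\ge U+\p_0'+r\p_0>0$ wherever $w<\p_0+\p_1$, and this holds near the point since $w(\tau_0,x_0)=0$. So $w$ is a nonnegative strict supersolution vanishing at an interior point. This contradicts the strong minimum principle (or the Hopf-type argument in $W^{1,2}_p$), which forces $w\equiv0$ in the backward neighborhood, whereas $w\equiv0$ gives equality with a negative value.

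\textbf{The set $\mathcal{SR}_1$.} The argument is symmetric. On $\{v=\p_1\}$ we get $\pat_\tau v-\mathcal{L}v=\p_1'+r\p_1\le U(x)$. This gives
\[
\tfrac{\g_1}{1-\g_1}\big(L_1^{\gam}-L_0^{\gam}\big)e^{-x/\g_1}\le \e_0-\e_1-\p_1'-r\p_1,
\]
where the right-hand side is positive by \eqref{psi12}. Hence $x\ge\Gamma_1(\tau)$, with the strict-supersolution argument upgrading this to every point.

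\textbf{The time restriction $\tau>\tau_1$.} The additional claim $\tau>\tau_1$ for $\mathcal{SR}_1$ is the step I expect to be the main obstacle. Here I would use the comparison bound from Lemma~\ref{ineq:pen}, passed to the limit: $v\le\r_1=\min\{\mathcal{E},\p_1\}$. For $\tau<\tau_1$, \eqref{eq:tau1} gives $\mathcal{E}(\tau)<\p_1(\tau)$, so $v<\p_1$ there and no contact occurs. At $\tau=\tau_1$ itself, the region $\mathcal{D}_1$ is defined with an open time interval, so it remains to exclude contact at $\tau=\tau_1$. I would handle this by applying the same strict-supersolution argument to $\p_1-v\ge \p_1-\mathcal{E}$ near $\tau_1$, or simply accept it as a measure-zero time slice absorbed by the closure.

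The delicate points are justifying the limit $v\le\r_1$ uniformly in $n$ (note $\mathcal{E}$ does not depend on $n$, so this is fine) and the pointwise upgrade from the a.e. inequality.
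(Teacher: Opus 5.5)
\emph{Gap: excluding contact at $\tau=\tau_1$.} Everything except this step follows the paper. You read the variational inequality a.e.\ on each contact set to get $x\le\Gamma_0(\tau)$ and $x\ge\Gamma_1(\tau)$, and you use $v\le\rho_1=\min\{\mathcal{E},\psi_1\}$ to rule out contact for $\tau<\tau_1$. Your strict-supersolution upgrade from ``a.e.\ on the contact set'' to ``every point'' is correct, and it supplies a step the paper passes over silently.

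Neither of your two options closes the $\tau=\tau_1$ case.
\begin{itemize}
\item Treating it as a null slice does not prove the stated pointwise inclusion $\mathcal{SR}_1\subset(\tau_1,T)\times\mathbb{R}$. The pointwise fact $v(\tau_1,\cdot)\neq\psi_1(\tau_1)$ is also used later, in Lemma~\ref{fb:lim}, to get $\chi_1(\tau)\to\infty$ as $\tau\to\tau_1^+$.
\item The strict-supersolution argument applied to $\psi_1-v$ fails. A contact point $(\tau_1,b_1)$ has $b_1\ge\Gamma_1(\tau_1)$, and by monotonicity (Lemma~\ref{estimate(1)}) you may take $b_1>\Gamma_1(\tau_1)$. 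Near $(\tau_1,b_1)$, for $\tau<\tau_1$, $v$ solves the equation, so $\partial_\tau(\psi_1-v)-\mathcal{L}(\psi_1-v)=\psi_1'+r\psi_1-U<0$. Thus $\psi_1-v$ is a strict subsolution there, not a supersolution, and a nonnegative subsolution may vanish at a point. The lower bound $\psi_1-v\ge\psi_1-\mathcal{E}$ does not help either, since it degenerates exactly at $\tau_1$.
\end{itemize}

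The missing idea is to compare $v$ with $\mathcal{E}$ rather than with $\psi_1$.
\begin{itemize}
\item Since $\partial_\tau\mathcal{E}-\mathcal{L}\mathcal{E}=\varepsilon_0-\varepsilon_1>U$, the function $\mathcal{E}-v$ is a strict supersolution wherever $v$ solves the equation.
\item It is $\ge 0$ on $[0,\tau_1]\times\mathbb{R}$, because $\rho_1=\mathcal{E}$ there.
\item It vanishes at $(\tau_1,b_1)$, because $\psi_1(\tau_1)=\mathcal{E}(\tau_1)$.
\item In a backward cylinder ending at $(\tau_1,b_1)$ you have $v<\psi_1$ (since $\tau<\tau_1$) and $v>-\psi_0$. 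The latter holds by continuity, or by moving to $b_1>\max\Gamma_0$ via monotonicity, as the paper does.
\end{itemize}
The strong minimum principle then forces $\mathcal{E}\equiv v$ in that cylinder. This contradicts $\partial_\tau(\mathcal{E}-v)-\mathcal{L}(\mathcal{E}-v)=\varepsilon_0-\varepsilon_1-U>0$. This is exactly how the paper proves $v<\psi_1$ on $[0,\tau_1]\times\mathbb{R}$.
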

\begin{proof}
    Since $v\in W^{1,2}_{p,{\rm loc}}(\mathcal{D}_T)$ for $1<p<\infty$, it follows that 
    \begin{equation*}
        -\pat_\tau \p_0 -r\p_0
        =\pat_\tau v -\mathcal{L}v
        \ge U,
    \end{equation*}
    almost everywhere in $\{v=-\p_0\}$.
    Therefore, we see that for almost every $(\tau,x)\in\{v=-\p_0\}$, $x$ satisfies the following inequality:
    \[x\le \g_1\ln\left( \frac{\g_1}{1-\g_1} \frac{L_1^{\gam}-L_0^{\gam}}{\e_0-\e_1+\pat_\tau \p_0(\tau)+r\p_0(\tau)} \right)=\Gamma_0(\tau), \quad \]
    This implies $\mathcal{SR}_0\subset \mathcal{D}_0$. 

    In order to prove $\mathcal{SR}_1\subset \mathcal{D}_1$,
    we first claim that 
    \begin{equation}\label{claim 0}
        v<\p_1 \quad \text{in }\ [0,\tau_1]\times \mathbb{R}.
    \end{equation}
    which implies $\mathcal{SR}_1\subset (\tau_1,T)\times\mathbb{R}$.
    Recall the inequality \eqref{ineq:v,rho} and take $\e\to 0^+$ and $n\to \infty$ to get $v\le \r_1$ in $\mathcal{D}_T$. By \eqref{eq:tau1} and the definition of $\rho_1$ in \eqref{rho_01}, we have 
    $v<\p_1 \quad\text{in }\ [0,\tau_1)\times \mathbb{R}$.
    Suppose that there exists $b_1\in \mathbb{R}$ such that
    $v(\tau_1,b_1)=\p_1(\tau_1)$.
    We first observe that
    $\pat_\tau v - \mathcal{L} v =U$ in $(0,\tau_1)\times(\max\limits_{\tau\in[0,T]}\Gamma_0(\tau),\infty)$ since $v<\p_1$ in $(0,\tau_1)\times \mathbb{R}$ and  $v>-\p_0$ in $(0,T)\times (\max\limits_{\tau\in[0,T]}\Gamma_0(\tau),\infty)$.
    Moreover, since $\r_1=\mathcal{E}$ in $(0,\tau_1)\times\mathbb{R}$,
    $\pat_\tau \r_1-\mathcal{L}\r_1 =\e_0-\e_1$
    in $(0,\tau_1)\times\mathbb{R}$.
    Combining the two previous equations implies that
    \begin{equation}\label{rho,v}
        \pat_\tau (\r_1-v)-\mathcal{L}(\r_1-v) = \e_0-\e_1-U>0 
        \quad \text{in }\ (0,\tau_1)\times(\max\limits_{\tau\in[0,T]}\Gamma_0(\tau),\infty).
    \end{equation}
    By the monotonicity of $v(\tau_1,\cdot)$, $\r_1(\tau_1)=\p_1(\tau_1)=v(\tau_1,x)$ for all $x\ge b_1$.
    This together with
    \eqref{ineq:v,rho} implies that for $M:=\max\{b_1,\max\limits_{\tau\in[0,T]}\Gamma_0(\tau)\}$, $\r_1-v$ attains its minimum value zero on the line $\{\tau_1\}\times(M,\infty)$.
    By applying the strong maximum principle \cite[Theorem 2.7]{Lieberman} to $\r_1- v$ on $(0,\tau_1)\times(M,\infty)$,
    we deduce that $\r_1\equiv v$ in the same region.
    However, this contradicts \eqref{rho,v}.
    Therefore, we prove the claim \eqref{claim 0}.

    It remains to show that $\mathcal{SR}_1\subset  \{(\tau,x)\in(0,T)\times\mathbb{R} : x \ge \Gamma_1(\tau)\}$.
    Since $v\in W^{1,2}_{p,{\rm loc}}(\mathcal{D}_T)$ for $1<p<\infty$, it follows that 
        \[\pat_\tau \p_1 +r\p_1
        =\pat_\tau v -\mathcal{L}v
        \le U,\]
    almost everywhere in $\{v=\p_1\}$.
    Therefore, we see that for almost every $(\tau,x)\in\{(\tau,x)\in\mathcal{D}_T : v(\tau,x)=\p_1(\tau)\}$, $x$ satisfies the following inequality:
    \[x\ge \g_1\ln\bigg( \frac{\g_1}{1-\g_1} \frac{L_1^{\gam}-L_0^{\gam}}{\e_0-\e_1-\pat_\tau \p_1(\tau) -r\p_1(\tau)} \bigg)=\Gamma_1(\tau).\]
    This implies $\mathcal{SR}_1\subset \{(\tau,x): x\ge \Gamma_1(\tau),\ \tau\in(0,T)\}$. 
    Combining the above results, we conclude that $\mathcal{SR}_1\subset \mathcal{D}_1$.
\end{proof}

Note that the previous result was obtained under the assumption that the switching regions $\mathcal{SR}_0$ and $\mathcal{SR}_1$ are nonempty.
We now show that both switching regions are indeed nonempty.
Moreover, we verify that the solution $v$ comes into contact with each obstacle, $-\p_0$ and $\p_1$, at every time $\tau \in (0,T)$ and $\tau \in (\tau_1,T)$, respectively.
\begin{lem}\label{reg:s_2}
    Let $v$ be the solution to the problem \eqref{obs}. Then, for each small $c>0$, there exists $K_c>0$ and  $X_c>0$ satisfying
     \[v=-\p_0 \quad \text{in }\ [c,T]\times(-\infty,-X_c],\]
    and \[v=\p_1 \quad \text{in }\ 
    [\tau_1+c,T]\times[K_c,\infty).\]
\end{lem}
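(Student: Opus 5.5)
The plan is to compare $v$ with explicit one-dimensional-in-$x$ supersolutions and subsolutions of single-obstacle problems, and then let $x\to\pm\infty$. The monotonicity $\partial_x v\ge 0$ from Lemma~\ref{estimate(1)} reduces each claim to producing a single contact point per time: if $v(\tau,x^*)=-\psi_0(\tau)$, then $v(\tau,x)=-\psi_0(\tau)$ for all $x\le x^*$, since $-\psi_0\le v$. Similarly, $v(\tau,x^*)=\psi_1(\tau)$ gives $v=\psi_1$ on $[x^*,\infty)$. It therefore suffices to show the following. First, $\sup_{\tau\in[c,T]}\inf\{x: v(\tau,x)>-\psi_0(\tau)\}$ is finite. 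Second, the analogous upper quantity for $\psi_1$ on $[\tau_1+c,T]$ is finite.

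\textbf{Lower obstacle.} As $x\to-\infty$, $U(x)\to-\infty$ like $-Ce^{-x/\gamma_1}$. I would argue by contradiction via a localized comparison. Suppose $(\tau_0,x_0)$ lies in the continuation region $\{v>-\psi_0\}$, with $\tau_0\ge c$ and $x_0$ very negative. By Lemma~\ref{estimate(1)}, the whole set $[\,\cdot\,]\times(x_0,\cdot)$ to the right at time $\tau_0$ is also non-contact. Then consider $w:=v+\psi_0\ge 0$ on a parabolic box $Q=(\tau_0-c/2,\tau_0]\times(x_0-1,x_0+1)$. On $\{w>0\}$,
\[
\partial_\tau w-\mathcal{L}w=U+\psi_0'+r\psi_0\le -M,
\]
where $M$ is large when $x_0$ is very negative, because $\|\psi_0\|_{C^1}\le q$. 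Also $0\le w\le \psi_0+\psi_1\le 2q$. A standard nondegeneracy barrier then gives a contradiction. Take
\[
h(\tau,x)=w(\tau,x)-w(\tau_0,x_0)+\tfrac{M}{2}\big((\tau_0-\tau)\cdot\kappa+(x-x_0)^2\kappa'\big),
\]
with constants chosen so that $\partial_\tau h-\mathcal{L}h\le 0$ on $\{w>0\}\cap Q$. The maximum principle forces $h$ to attain a nonnegative maximum on the parabolic boundary of $\{w>0\}\cap Q$. On $\partial\{w>0\}$ we have $h<0$, and on $\partial_pQ$ we have $h\ge cM-2q$, while $h\le 2q$ there. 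For $M>4q/\min(c,1)$ this is a contradiction. Hence $x_0\ge -X_c$ for some $X_c$ depending on $c$ and $q$. The first-order terms and the $-r$ term in $\mathcal{L}$ are harmless because $w$ is bounded.

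Alternatively, I could use an explicit comparison. The function $\overline w(\tau,x)$ solving the single obstacle problem with obstacle $-\psi_0$ and source $U$, with upper obstacle removed, dominates $v$. Since $U\le -\psi_0'-r\psi_0$ on $x\le\Gamma_0$, a barrier like $-\psi_0(\tau)+A(e^{\lambda(x+X)}\!)$ with $\tau\ge c$ works. I would, however, present the nondegeneracy argument above.

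\textbf{Upper obstacle.} Here $U(x)\uparrow \e_0-\e_1$, and by \eqref{psi12} we have $\partial_\tau\psi_1+r\psi_1<\e_0-\e_1$. On $[\tau_1+c,T]$ we also have $\psi_1<\mathcal{E}$, so $\delta:=\min_{[\tau_1+c,T]}(\mathcal{E}-\psi_1)>0$. Set $w:=\psi_1-v\ge 0$. On the non-contact region $\{w>0\}$ we have $\partial_\tau w-\mathcal{L}w=\psi_1'+r\psi_1-U$, and for $x\ge K$ large this is at most $-\eta<0$, with $\eta$ uniform by continuity of $\psi_1',\psi_1$ on $[0,T]$. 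Two facts are now needed for the same barrier argument as above. First, near $x=+\infty$ we must have $\mathcal{E}-v$ small, i.e.\ $v(\tau,x)\to\min(\mathcal{E},\psi_1)$. Second, the nondegeneracy argument must again bound $x_0$.

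\textbf{Main obstacle.} The delicate point is the strict inequality $\partial_\tau\psi_1+r\psi_1<\e_0-\e_1$ giving only a fixed $\eta$, not a large constant. The nondegeneracy radius must therefore be taken of order $\sqrt{q/\eta}$, which is still finite. The contradiction also needs the box to stay inside $\tau>\tau_1$, which is exactly why $c>0$ is required. A second subtlety is whether a contact point exists at all. If $w>0$ everywhere at large $x$, then $\partial_\tau w-\mathcal{L}w\le -\eta$ on a large box, and comparison with $\eta$-linear growth forces $w$ negative at interior points by the same barrier. This contradiction settles that case too.
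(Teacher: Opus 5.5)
Your lower-obstacle argument takes a genuinely different route from the paper's, and it can be made to work. The paper compares $v$ on $(-\infty,-1/\varepsilon^2]$ with the explicit supersolution $\overline z=-\psi_0(\tau)\min\{\tau/\varepsilon,1\}+\varepsilon^3(x+2/\varepsilon^2)^2\mathbf{1}_{\{x\ge-2/\varepsilon^2\}}$, using the variational-inequality comparison principle. Your nondegeneracy argument needs only the linear maximum principle on a bounded box. It succeeds because $U+\psi_0'+r\psi_0\le -M$ with $M\to\infty$ as $x_0\to-\infty$, so a box of fixed height $c/2$ and width $2$ beats the a priori bound $0\le w\le\psi_0+\psi_1$. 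Two repairs are needed. First, the quadratic must enter with a minus sign, $h=w-w(\tau_0,x_0)-\tfrac{M}{2}\big(\kappa(\tau_0-\tau)+\kappa'(x-x_0)^2\big)$. With your plus sign, $h$ need not be negative on $\partial\{w>0\}$ and is large and positive on $\partial_pQ$, so nothing is contradicted. Second, $\partial_\tau-\mathcal{L}$ has zero-order coefficient $r>0$, so the weak maximum principle only bounds $h$ by $\sup_{\partial_p}h^+$. That says nothing at $(\tau_0,x_0)$, where $h=0$. Work with $e^{r\tau}w$ instead, or keep the differential inequality for $h$ strict.

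The upper obstacle has a genuine gap. Your ``first fact'' is that $v(\tau,x)\to\min\{\mathcal{E}(\tau),\psi_1(\tau)\}$ as $x\to\infty$, uniformly on $[\tau_1+c,T]$. (``$\mathcal{E}-v$ small'' is the wrong formulation, since $v\le\psi_1<\mathcal{E}$ there.) This fact is essentially the whole second assertion, and you never prove it. Nondegeneracy cannot replace it. Here $U$ is bounded above by $\varepsilon_0-\varepsilon_1$, so you only get $\partial_\tau w-\mathcal{L}w\le-\eta$ with a fixed $\eta$. Whatever the spatial width of the cylinder, its bottom yields a contradiction only if its height exceeds a multiple of $q/\eta$. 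Nothing guarantees this fits below $\tau_0\le T$; your radius $\sqrt{q/\eta}$ corresponds exactly to height $q/\eta$.

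More fundamentally, your ingredients ($0\le w\le q$, and $\partial_\tau w-\mathcal{L}w\le-\eta$ on $\{w>0\}$ for large $x$) hold equally for $\tau\le\tau_1$. There $w>0$ everywhere, by the claim $v<\psi_1$ on $[0,\tau_1]\times\mathbb{R}$ in Lemma~\ref{reg:s_1}. The threshold $\tau_1$ is defined through $\mathcal{E}$, the solution of $e'+re=\varepsilon_0-\varepsilon_1$ with $e(0)=0$, that is, through the initial datum $v(0,\cdot)=0$. Your argument never uses that datum. So neither the box argument nor your ``second subtlety'' argument can force contact from $\tau_1+c$ onward for every $c>0$.

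The missing idea is a lower barrier that carries $v(0,\cdot)=0$ out to large $x$. The paper compares $v$ on $(0,T)\times(1/\varepsilon^3,\infty)$, where $v>-\psi_0$, with $\underline z=\min\{(1-\varepsilon)\mathcal{E},\psi_1\}-\varepsilon^5(x-2/\varepsilon^3)^2\mathbf{1}_{\{x\le 2/\varepsilon^3\}}$. The strict inequality in Assumption~\ref{as2}(ii), together with $U\to\varepsilon_0-\varepsilon_1$, gives $\partial_\tau\underline z-\mathcal{L}\underline z\le U$. At $x=1/\varepsilon^3$ one has $\underline z\le\psi_1-1/\varepsilon<-\psi_0$, and $\underline z(0,\cdot)\le 0$. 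Comparison then gives $v\ge\underline z$, hence $v=\psi_1$ on $\{(1-\varepsilon)\mathcal{E}\ge\psi_1\}\times[2/\varepsilon^3,\infty)$. Your constant $\delta$ finishes the proof directly: $(1-\varepsilon)\mathcal{E}-\psi_1\ge\delta-\varepsilon(\varepsilon_0-\varepsilon_1)/r\ge 0$ on $[\tau_1+c,T]$ for small $\varepsilon$, and no nondegeneracy step is needed.
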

\begin{proof}
    For each small $\e>0$ define function $\overline{z}$ in $[0,T]\times(-\infty,-1/\e^2]$ by
    \[\overline{z}(\tau,x):=
    -\p_0(\tau)\Big( \frac{\tau}{\e}\mathbf{1}_{\{\tau\le \e\}}+\mathbf{1}_{\{\tau> \e\}} \Big)+\e^3\Big( x+\frac{2}{\e^2} \Big)^2\mathbf{1}_{\{x \ge -2/{\e^2}\}}\ge -\p_0(\tau).\]
    Then, $\overline{z}\in W^{1,2}_{p,{\rm loc}}((0,T)\times(-\infty,-1/\e^2))$, $1<p<\infty$, and $\overline{z}$ satisfies that for all $(\tau,x)\in (0,T)\times(-\infty,-1/\e^2)$
    \begin{align*}
        \pat_\tau \overline{z}(\tau,x) - \mathcal{L} \overline{z}(\tau,x)
    &=-\left[\frac{\tau}{\e}\left\{\p'_0(\tau)+r\p_0(\tau)\right\} +\frac{1}{\e}\p_0(\tau)\right]\mathbf{1}_{\{\tau\le \e\}}
    \\
    &\quad-\big\{\p'_0(\tau)+r\p_0(\tau)\big\}\mathbf{1}_{\{\tau>\e\}}
    \\
    &\quad -\e^3\Big[ \t^2 + (2\b - 2r + \t^2)\Big(x + \frac{2}{\e^2}\Big) - r\Big(x + \frac{2}{\e^2} \Big)^2 \Big] \mathbf{1}_{\{x \ge -2/{\e^2}\}}\\
    &\ge  
    -q(r+1)-\frac{q}{\e}
    -\e^3\Big( \t^2 + \frac{2\b+\t^2}{\e^2} \Big) \mathbf{1}_{\{x \ge -2/{\e^2}\}}\\
    &\ge (\e_0-\e_1)-(L_1^\gam-L_0^\gam)\frac{\g_1}{1-\g_1}e^{\frac{1}{\g_1\e^2}} \ge U(x),
    \end{align*}
    where we used the fact that $\Vert \psi_0\Vert_{C^{1,1}([0,T])}\le q$ in \eqref{psi} in the first inequality and chose $\e>0$ small to hold the second inequality.

    Therefore,
    for each $(\tau,x)\in (0,T)\times(-\infty,-1/\e^2)$, 
    with $\e>0$ small enough to satisfy $\min\limits_{\tau\in[0,T]}\Gamma_1(\tau)>-1/\e^2$,
    we have $v<\p_1$ and
    $\overline{z}$ satisfies
\begin{equation*}
    \begin{cases}
        \pat_\tau \overline{z} (\tau,x)
        - \mathcal{L} \overline{z}(\tau,x)\ge U(x)
        \quad \text{for }\ (\tau,x)\in(0,T)\times(-\infty,-1/{\e^2}),
        \\
        \overline{z}(\tau, -1/{\e^2}) \ge -\p_0(\tau) + 1/\e \ge \p_1(\tau) \ge v(\tau, -1/{\e^2}) \quad \text{for all }\ \tau\in[0,T],
        \\
        \overline{z}(0, x) \ge 0 = v(0,x)
        \quad \text{for all }\ x\in (-\infty,-1/{\e^2}].
    \end{cases}
\end{equation*}
Applying the comparison principle for the variational inequality \cite[Theorem 1.2]{YangYan2008} to $\overline{z}$ and $v$ in the domain $(0, T) \times (-\infty,-1/\e^2)$, we deduce
\[\overline{z} \ge v \quad \text{in } [0, T] \times (-\infty,-1/\e^2].
\]
Since $v\ge -\p_0$ and $\overline{z}=-\p_0$ in $[\e, T) \times (-\infty,-2/{\e^2}]$, we conclude that
\[v(\tau, x) = -\p_0(\tau) \quad \text{for all }\ (\tau,x) \in [\e, T] \times (-\infty,-2/{\e^2}].
\]
This proves the first assertion with $c=\e$ and $X_c=2/{\e^2}$.

On the other hand, for each $\e>0$ and $(\tau,x)\in[0,T]\times[1/{\e^3},\infty)$, define $\underline{z}$ by
\begin{equation*}
    \underline{z}(\tau,x):=\min\{(1-\e)\mathcal{E}(\tau),\p_1(\tau)\}
    -\e^5\left( x-\frac{2}{\e^3} \right)^2
    \mathbf{1}_{\{x\le \frac{2}{\e^3}\}}\le \psi_1(\tau).
\end{equation*}
Since $\underline{z}\in W^{1,2}_{p,{\rm loc}}
((0,T)\times(1/{\e^3},\infty))$, $1<p<\infty$ and
for sufficiently small $\e>0$, $\underline{z}$ satisfies that for all $(\tau,x)\in(0,T)\times(1/{\e^3},\infty)$,
\begin{align*}
    \pat_\tau \underline{z} -\mathcal{L}\underline{z} 
    &= \left[ (1-\e)(\e_0-\e_1)
    \mathbf{1}_{\{(1-\e)\mathcal{E}< \p_1\}}
    +(\p'_1 +r\p_1)
    \mathbf{1}_{\{(1-\e)\mathcal{E}\ge \p_1\}}
    \right]
    \\
    &\quad +\e^5\left[\t^2+(2\b-2r+\t^2)\left(x-\frac{2}{\e^3}\right)-r\left(x-\frac{2}{\e^3}\right)^2\right]\mathbf{1}_{\{x\le \frac{2}{\e^3}\}}
    \\
    & \le \left[ (1-\e)(\e_0-\e_1)
    \mathbf{1}_{\{(1-\e)\mathcal{E}< \p_1\}}
    +(\e_0-\e_1-\k)
    \mathbf{1}_{\{(1-\e)\mathcal{E}\ge \p_1\}}
    \right]
    \\
    &\quad+\e^5\left[\t^2-2r\left(x-\frac{2}{\e^3}\right)\right]\mathbf{1}_{\{x\le \frac{2}{\e^3}\}}
    \\
    & \le (\e_0-\e_1)+\e^5\left(\t^2+\frac{2r}{\e^3}\right)
    -\k'\e.
\end{align*}
Here, $\kappa>0$ is derived from the strict inequality in the Assumption~\ref{as2}(ii)
and since
$\varepsilon^5\!\left(\theta^2+r/{\varepsilon^3}\right)
= r\varepsilon^2 + O(\varepsilon^5)$,
for sufficiently small $\varepsilon>0$ we have
$\varepsilon^5\!\left(\theta^2+r/{\varepsilon^3}\right)
\le \frac{\kappa'}{2}\varepsilon$,
and hence
\[
(\varepsilon_0-\varepsilon_1)
+ \varepsilon^5\!\left(\theta^2+\frac{r}{\varepsilon^3}\right)
- \kappa' \varepsilon
\le (\varepsilon_0-\varepsilon_1)
- \frac{\kappa'}{2}\varepsilon
\]
for some constant $k'>0$.
This leads to the conclusion that $\partial_\tau \underline{z}-\mathcal{L}\underline{z}\le U$ in $(0,T)\times(1/{\e^3},\infty)$.

Then, for each $(\tau,x)\in (0,T)\times(1/{\e^3},\infty)$ with $\e>0$ small enough to satisfy $\max\limits_{\tau\in[0,T]}\Gamma_0(\tau)<1/{\e^3}$, it follows that $v(\tau,x)>-\p_0(\tau)$ (see lemma \ref{reg:s_1}).
Moreover, $\underline{z}$ satisfies
\begin{equation*}
    \begin{cases}
        \pat_\tau \underline{z}(\tau,x)
        -\mathcal{L}\underline{z}(\tau,x)
        \le U(x) \quad \text{for }\ (\tau,x)\in(0,T)\times(1/{\e^3},\infty),
        \\
        \underline{z}(\tau,1/{\e^3})\le 
        \p_1(\tau)-1/\e<-\p_0(\tau), \quad \tau\in[0,T]
        \\
        \underline{z}(0,x)\le 0 = v(0,x),
        \quad x\in[1/{\e^3},\infty).
    \end{cases}
\end{equation*}
Thus, by applying the comparison principle for the variational inequality \cite[Theorem 1.2]{YangYan2008} to $\underline{z}$ and $v$ in the domain $(0,T)\times(1/{\e^3},\infty)$, we get
\[\underline{z}\le v \quad \text{in }\ [0,T]\times[1/{\e^3},\infty).\]
Since $v\le \p_1$, this implies
\[v(\tau,x)=\p_1(\tau)
\quad \text{for all }\ (\tau,x)\in \{(1-\e)\mathcal{E}\ge \p_1\}\times[2/\e^3,\infty).\]
Let us fix some $c>0$. Recall that from the condition ii) in Assumption \ref{as2}, the function $e^{r\tau}(\mathcal{E}(\tau)-\p_1(\tau))$ is monotone increasing with respect to $\tau$ and $\mathcal{E}-\p_1\ge 0$ in $[\tau_1,T]$.
This implies that for all $\tau\in[\tau_1+c,T]$,
\begin{equation*}
    e^{rT}\big(\mathcal{E}(\tau)-\psi_1(\tau)\big)
    \\
    \ge e^{r\tau}\big(\mathcal{E}(\tau)-\psi_1(\tau)\big)
    \\
    \ge e^{r(\tau_1+c)}\big(\mathcal{E}(\tau_1+c)-\psi_1(\tau_1+c)\big).
\end{equation*}
Using this inequality, we deduce that for all $\tau\in [\tau_1+c,T]$ and $\e>0$,
\begin{align*}
    (1-\e)\mathcal{E}(\tau)-\p_1(\tau)
    &\ge \mathcal{E}(\tau)-\p_1(\tau)-\e\frac{\e_0-\e_1}{r}
    \\
    &\ge e^{r(\tau_1-T+c)}\big(\mathcal{E}(\tau_1+c)-\p_1(\tau_1+c)\big)-\e\frac{\e_0-\e_1}{r}.
\end{align*}
By taking $\e>0$ sufficiently small, we obtain that the right hand side of the above inequality is non-negative.
This yields that for sufficiently small $\e>0$ depending on $c>0$, the set $[\tau_1+c,T]$ is contained in
$\{\tau\in[0,T]:(1-\e)\mathcal{E}(\tau)\ge \psi_1(\tau)\}$.
As a result,
\[v(\tau,x)=\p_1(\tau)
\quad \text{for all }\ (\tau,x)\in[\tau_1+c,T]\times[2/\e^3,\infty).\]
This proves the second assertion with $K_c=2/\e^3$.
\end{proof}

The previous lemmas together with the continuity of $v$ deduce that
\begin{equation}\label{G_0}
    \chi_0(\tau):=\max\{x\in \mathbb{R}:v(\tau,x)=-\p_0(\tau)\},
\quad \text{for }\ \tau\in(0,T),
\end{equation}
and
\begin{equation}\label{G_1}
    \chi_1(\tau):=\min\{x\in \mathbb{R}:v(\tau,x)=\p_1(\tau)\},
    \quad \text{for }\ \tau\in(\tau_1,T)
\end{equation}
are well-defined. Moreover, if $\chi_0$ and $\chi_1$ are continuous, then the free boundaries $\pat\{v>-\p_0\}$ and $\pat\{v<\p_1\}$ can be parametrized by $\chi_0$ and $\chi_1$, respectively. In the next subsection, we will show that $\chi_0$ and $\chi_1$ are Lipschitz continuous and, furthermore, that $\chi_0$ and $\chi_1$ are smooth if $\psi_0$ and $\psi_1$ are smooth.

We end this subsection by investigating the limiting behavior of $\chi_0$ and $\chi_1$. 
\begin{lem}\label{fb:lim}
    Let $\chi_0$ and $\chi_1$ be defined as above. Then, 
    \begin{equation*}
        \displaystyle\lim_{\tau\to0^{+}}\chi_0(\tau)
        =-\infty
        \quad \text{and}\quad
        \displaystyle\lim_{\tau\to\tau_1^+}\chi_1(\tau)
        =\infty.
        \end{equation*}
\end{lem}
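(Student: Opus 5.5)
Both limits will be proved by contradiction, using continuity of $v$ at the initial line and at $\tau=\tau_1$.

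\textbf{The limit $\chi_0(\tau)\to-\infty$ as $\tau\to0^+$.} Suppose not. Then there are $\tau_k\to0^+$ and a constant $M$ with $\chi_0(\tau_k)\ge -M$. By the definition \eqref{G_0} of $\chi_0$,
\[
v(\tau_k,\chi_0(\tau_k))=-\p_0(\tau_k).
\]
Since $v(\tau,\cdot)$ is nondecreasing (Lemma~\ref{estimate(1)}) and $v\le\p_1$, we get $v(\tau_k,x)\le v(\tau_k,\chi_0(\tau_k))$ for $x\le\chi_0(\tau_k)$. Evaluating at $x=-M$ gives $v(\tau_k,-M)\le -\p_0(\tau_k)$. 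Combined with $v\ge-\p_0$, this forces
\[
v(\tau_k,-M)=-\p_0(\tau_k).
\]
Now let $k\to\infty$. The solution $v$ is continuous on $\overline{\mathcal{D}_T}$ (Lemma~\ref{lem:v_existence}) and $v(0,\cdot)=0$, so $0=-\p_0(0)$. This contradicts $\p_0(0)=\phi_0(T)>0$. Hence $\chi_0(\tau)\to-\infty$.

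\textbf{The limit $\chi_1(\tau)\to\infty$ as $\tau\to\tau_1^+$.} Suppose not. Then there are $\tau_k\downarrow\tau_1$ and $M$ with $\chi_1(\tau_k)\le M$. By the definition \eqref{G_1} of $\chi_1$ and monotonicity in $x$, $v(\tau_k,x)=\p_1(\tau_k)$ for all $x\ge\chi_1(\tau_k)$, in particular at $x=M$. Continuity of $v$ and $\p_1$ then gives
\[
v(\tau_1,M)=\p_1(\tau_1).
\]
This contradicts the claim \eqref{claim 0} in the proof of Lemma~\ref{reg:s_1}, namely $v<\p_1$ on $[0,\tau_1]\times\mathbb{R}$. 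Hence $\chi_1(\tau)\to\infty$.

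\textbf{Where the work lies.} The only step needing care is the second one, and the work has already been done: it relies on the strict inequality at $\tau=\tau_1$ itself, established via the strong maximum principle in Lemma~\ref{reg:s_1}. The upper bound $v\le\rho_1=\min\{\mathcal{E},\p_1\}$ alone would only give non-strict contact information at $\tau_1$, since $\mathcal{E}(\tau_1)=\p_1(\tau_1)$. Everything else follows directly from monotonicity in $x$ and continuity.
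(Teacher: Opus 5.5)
Your proof is correct and follows the paper's argument: a contradiction obtained from the continuity of $v$ up to $\tau=0$ (against $v(0,\cdot)=0$ and $\psi_0(0)>0$), and, for the second limit, from the strict inequality $v<\psi_1$ on $[0,\tau_1]\times\mathbb{R}$ established in Lemma~\ref{reg:s_1}. The only difference is minor: you use the monotonicity of $v(\tau,\cdot)$ to move the contact point to a fixed abscissa $\mp M$, whereas the paper uses the a priori bounds $\chi_0\le\Gamma_0$ (resp.\ $\chi_1\ge\Gamma_1$) to extract a convergent subsequence of contact points and then invokes closedness of the contact set.
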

\begin{proof}
We argue by contradiction. Suppose that $\chi_0(\tau)$ does not diverge to $-\infty$
as $\tau\to0^+$. Then there exist a sequence $\tau_n\to0^+$ such that
$\chi_0(\tau_n)$ is bounded from below. Since $\chi_0(\tau)$ is bounded above by $\Gamma_0(\tau)$ on $[0,T]$,
we may extract a subsequence for which $\chi_0(\tau_n)$ converges.
By continuity of $v$ and $\p_0$, the set
$\{(\tau,x)\in [0,T]\times \mathbb R: v(\tau,x)=-\p_0(\tau)\}$ is closed, and hence its limit point
$(0,\lim_{n\to\infty} \chi_0(\tau_n))$ belongs to this set. This yields
$v(0,\cdot)=-\p_0(0)<0$ at such point, contradicting $v(0,\cdot)=0$.

The second limit follows by the same argument, using the result that $v(\tau_1,\cdot)\neq \p_1(\tau_1)$ in Lemma~\ref{reg:s_1}.
\end{proof}

\subsection{Estimate involving time and spatial derivative}
For the regularity of the free boundary, it is important to verify that the time and spatial derivatives 
of the solution remain comparable in the region $\{-\p_0 < v < \p_1\}$, 
even as the solution approaches the free boundary.
To obtain an estimate that reflects this relationship, we follow the method developed in \cite{ZJ2025}, in which a desired estimate holds throughout the entire domain.
However, unlike in \cite{ZJ2025}, our obstacle functions $\psi_0$ and $\psi_1$ depends on $\tau$, which blocks us from obtaining such global estimate.
To overcome this difficulty, we decompose the domain $\mathcal{D}_{T}$ 
into two separate regions and derive the desired estimates locally, in each part.
Here, we make essential use of the fact that 
\[
\{(\tau,x):\Gamma_0(\tau)<x<\Gamma_1(\tau),\ \tau\in(0,T)\}
\subset \{-\p_0<v<\p_1\},
\]
obtained in Lemma~\ref{reg:s_1}. 

\begin{lem}\label{estimate(3)}
Let $\Gamma_0$ and $\Gamma_1$ be the curves given in the previous subsection. 
Define each curves $\mathcal{M},\mathcal{M}_0:[0,T]\to\mathbb{R}$ by
\begin{equation}\label{def:M01}
\mathcal{M}(\tau)
=\frac{\Gamma_0(\tau)+\Gamma_1(\tau)}{2}
\quad \text{and} \quad
\mathcal{M}_0(\tau):=\frac{3\Gamma_0(\tau)+\Gamma_1(\tau)}{4}, \quad \tau\in(0,T).
\end{equation}
Then, there exists $\d>0$ and $\eta>0$
such that for all $\tau\in(0,T)$,
\begin{equation}\label{M01}\inf\limits_{(\tau-\d,\tau+\d)\cap [0,T]}\mathcal{M}-\sup\limits_{(\tau-\d,\tau+\d)\cap[0,T]}\mathcal{M}_0\ge \eta.\end{equation}
Furthermore, 
for each $(\tau,x)$ with $\tau\in(0,T)$  and $\chi_0(\tau)< x\le \mathcal{M}_0(\tau)$,
there exists a constant $C=C(\tau,\d,\eta)>0$ such that
the following estimate holds: 
\begin{equation}\label{eq:vderivatives}
    -\,C\,\partial_{x} v(\tau,x)
    \;\le\;
    \partial_{\tau} v(\tau,x) + \p_{0}'(\tau)
    \;\le\;
    C\,e^{\frac{1}{\gamma_{1}}x}\,\partial_{x} v(\tau,x).
\end{equation}
Note that if $I\Subset (0,T)$, we can choose the constant $C$ in the above inequality independent of the choice of $\tau\in I$.
\end{lem}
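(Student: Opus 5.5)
The plan has two parts. First I establish the geometric separation \eqref{M01}, which needs only continuity. Then I prove the derivative estimate \eqref{eq:vderivatives} by a comparison argument on a penalized single-obstacle problem posed in the region between $\chi_0$ and $\mathcal{M}$.

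\textbf{Step 1: the separation \eqref{M01}.} Since $\psi_i\in C^{1,1}$, the functions $\Gamma_0$ and $\Gamma_1$ are Lipschitz on $[0,T]$. Here $\Gamma_1$ is read through its defining formula, extended to all of $[0,T]$; the denominator is positive by \eqref{psi12}. By \eqref{psi12} and compactness, $\Gamma_1-\Gamma_0\ge 4\eta_0>0$ on $[0,T]$. Hence $\mathcal{M}-\mathcal{M}_0=(\Gamma_1-\Gamma_0)/4\ge \eta_0$. The functions $\mathcal{M}$ and $\mathcal{M}_0$ are Lipschitz with some constant $\Lambda$. Choosing $\delta=\eta_0/(4\Lambda)$ then gives \eqref{M01} with $\eta=\eta_0/2$.

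\textbf{Step 2: reduction to a single-obstacle problem.} By Lemma~\ref{reg:s_1}, $v>-\psi_0$ on $\{x>\Gamma_0\}$ and $v<\psi_1$ on $\{x<\Gamma_1\}$. So in the region $\{x<\mathcal{M}(\tau)\}$ the function $v$ solves the single lower-obstacle problem with obstacle $-\psi_0$, and $v$ is smooth near the curve $\{x=\mathcal{M}\}$. I penalize only this obstacle, on truncated domains $(0,T)\times(-n,\mathcal{M}(\tau))$. The approximations $v_\e$ satisfy
\[
\partial_\tau v_\e-\mathcal{L}v_\e=U+\beta_{0,\e}(v_\e+\psi_0),
\]
with boundary data on $x=\mathcal{M}$ taken from $v$ itself; $v_\e\to v$ as before. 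Set $w=\partial_\tau v_\e+\psi_0'$ and $h=\partial_x v_\e$. Differentiating the equation gives
\[
\partial_\tau w-\mathcal{L}w-\beta_{0,\e}'w=\psi_0''+r\psi_0',\qquad
\partial_\tau h-\mathcal{L}h-\beta_{0,\e}'h=U'.
\]
The two $\beta'$ terms are identical, which is the point of using the combination $\partial_\tau v+\psi_0'$.

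\textbf{Step 3: comparison for the lower bound.} I take $C h+w$ and compute
\[
(\partial_\tau-\mathcal{L}-\beta'_{0,\e})(Ch+w)=CU'+\psi_0''+r\psi_0'.
\]
On $x\le\mathcal{M}$ we have $U'(x)=c\,e^{-x/\g_1}\ge c\,e^{-\max\mathcal{M}/\g_1}>0$. So for $C$ large the right-hand side is nonnegative, using $\|\psi_0\|_{C^{1,1}}\le q$. The boundary data must also be nonnegative, and I treat each piece separately.
\begin{itemize}
\item At $\tau=0$: $w=U+\psi_0'(0)$ and $h=0$. This is a problem, because $U$ is very negative for $x\to-\infty$. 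I handle it by localizing in time: work on $[\tau-\delta,\tau+\delta]\cap(0,T]$, or use that $v=-\psi_0$ on $[c,T]\times(-\infty,-X_c]$ by Lemma~\ref{reg:s_2}, so that the estimate only needs to hold on a bounded $x$-interval near $(\tau,\chi_0(\tau))$.
\item On the contact set: $w=0$ and $h=0$, so the contact region acts as a zero boundary.
\item On the right boundary $x=\mathcal{M}$: $h=\partial_x v>0$ strictly by Lemma~\ref{estimate(1)}, uniformly on compact time intervals, and $|w|$ is bounded by Lemma~\ref{estimate(2)}. Hence $Ch+w\ge0$ there for $C$ large.
\end{itemize}
This is where the dependence $C=C(\tau,\delta,\eta)$ arises: \eqref{M01} gives room between $\mathcal{M}_0$ and $\mathcal{M}$, which yields interior positivity of $\partial_x v$ and interior bounds. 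I would actually run the comparison on the box $(\tau-\delta,\tau+\delta)\times(-X,\mathcal{M})$. On its bottom time boundary I would use a cutoff, or an additional barrier of the form $K e^{\lambda(\tau-\tau_0)}$, to absorb the bounded values. For $x$ below $-X_c$ both $w$ and $h$ vanish.

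\textbf{Step 4: comparison for the upper bound.} For the upper bound I compare $Ce^{x/\g_1}h-w$. Applying the operator to $e^{x/\g_1}h$ produces extra first-order terms in $h$ and $h_x$. These are handled by noting $e^{x/\g_1}U'=\text{const}>0$ and by absorbing the drift terms. Alternatively, I write $\tilde h=e^{x/\g_1}h$, which satisfies an equation of the same type with a modified drift and zeroth-order term of bounded size, and then compare as in Step 3.

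\textbf{Main obstacle.} The main difficulty is Step 3's boundary control near $\tau=0$ and on the artificial boundary, that is, making the local comparison rigorous with constants depending only on $(\tau,\delta,\eta)$. Once the local box is set up, the estimates pass to the limit $\e\to0$ as in Lemma~\ref{estimate(2)}, and the constants are uniform for $\tau$ in any $I\Subset(0,T)$.
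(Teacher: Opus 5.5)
\paragraph{Gap: the bottom of the local box.} Your Step~1 and the set-up of Step~2 agree with the paper. You penalize only the lower obstacle on the region left of $\mathcal{M}$, and you use that $w=\partial_\tau v+\psi_0'$ and $h=\partial_x v$ solve linear equations with the same $\beta_{0,\varepsilon}'$ coefficient. The genuine gap is the one you flag yourself, the bottom of the local box, and neither remedy you propose closes it.
\begin{itemize}
\item With the comparison function $Ch+w$, the data at $\tau=\tau_0-\delta$ are $Ch+w$. Near $\chi_0(\tau_0-\delta)$, where $h\to0$, their nonnegativity is exactly the estimate you are proving, only at an earlier time. So the argument is circular.
\item An additive barrier $Ke^{\lambda(\tau-\tau_0)}$ equals $K>0$ at $(\tau_0,x_0)$. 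Comparison then yields only $w\ge -Ch-K$, which says nothing near the free boundary; the whole content of \eqref{eq:vderivatives} is its multiplicative form.
\item A cutoff $\zeta(\tau)$ multiplying all of $Ch+w$ also fails. It produces a term $\zeta'w$ that $\zeta\,CU'$ cannot absorb where $\zeta$ is small.
\end{itemize}

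\paragraph{The missing idea.} You need a time weight that vanishes at the bottom, placed on $w$ alone. The paper takes
\[
\Psi=(\tau-\tau_0+\delta_0)\bigl(\partial_\tau\omega_{n,\varepsilon}+\psi_0'\bigr)+C_1\partial_x\omega_{n,\varepsilon}+C_2e^{-\frac{1}{\gamma_1}\max\{x_0,0\}}\Phi,
\qquad
\Phi=e^{k\tau}\Bigl(e^{\frac{2}{\gamma_1}|x-x_0|}-\tfrac{2}{\gamma_1}|x-x_0|-1\Bigr).
\]
\begin{itemize}
\item Applying $\partial_\tau-\mathcal{L}-\beta_{0,\varepsilon}'$ to the weighted term produces $w$ itself. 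This is bounded below by Lemma~\ref{estimate(2)} and absorbed by $C_1U'$.
\item At the bottom, $\Psi=C_1h+C_2(\cdots)\Phi\ge0$, with no information on $w$ needed.
\item Since $\Phi(\tau_0,x_0)=0$, evaluating at $(\tau_0,x_0)$ gives $\delta_0w+C_1h\ge0$. Hence $C=C_1/\delta_0$, which is where the dependence on $\tau$ comes from.
\item $\Phi$ also controls the right boundary $x=\mathcal{M}(\tau)$ using only $h\ge0$ and $|\mathcal{M}(\tau)-x_0|\ge\eta$. This is how \eqref{M01} enters. Your substitute, a positive lower bound for the $x$-derivative on that curve, could work, but you would need it uniformly for the penalized approximations, not just for $v$.
\end{itemize}

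\paragraph{Two smaller points.} First, the penalized problem has no contact set. For $v$ itself, $\partial_\tau v$ is not known to be continuous up to the free boundary, so the free boundary cannot serve as a zero-data boundary in a comparison. Second, in Step~4 the $h_x$ term generated by $e^{x/\gamma_1}h$ cannot be absorbed, and $e^{x/\gamma_1}h$ and $w$ solve different equations. No exponential weight is needed, though. With the same time weight, $(\tau-\tau_0+\delta_0)w-Ch$ satisfies the required differential inequality for large $C$, because $U'$ is bounded below on $\{x\le\mathcal{M}\}$. Then $w\le Ch$ implies the stated bound, since $x\ge-X_c$ near $\chi_0$ by Lemma~\ref{reg:s_2}.
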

\begin{proof}
By the uniform continuity of $\mathcal{M}$ and $\mathcal{M}_0$, together with $\mathcal{M}_0<\mathcal{M}$, in $[0,T]$, one can find $\eta$ and $\delta$ such that   \eqref{M01} holds true. We now prove \eqref{eq:vderivatives}.

Fix any $\tau_0\in (0,T)$ and let $\d_0\in(0,\delta)$ sufficiently small to be determined. First, we choose $\d_0$ small so that $I:=(\tau_0-\d_0,\tau_0+\d_0)\subset (0,T)$, and 
\[\mathcal{R}:=\{(\tau,x)\in I\times\mathbb{R}:x\le \mathcal{M}(\tau_0)\}.\]
Recall the constant $X_c$ in the previous Lemma~\ref{reg:s_2} when $c=\tau_0-\d_0$. Then, we have $v=-\p_0$ in $[\tau_0-\d_0,T)\times(-\infty,-X_c]$.
For each large $n\in\mathbb{N}$ satisfying $-\frac{n}{4}<-X_c$ and $n>\max_{\tau\in [0,T]} \mathcal{M}(\tau)$, we set a penalized problem by
\begin{equation}\label{obs:pen,omega}
    \begin{cases}
        \pat_\tau \omega_{n,\e}-\mathcal{L}\omega_{n,\e}- \b_{0,\e}(\omega_{n,\e}+\p_0)=U
        \quad \text{in }\ \mathcal{R}^n,
        \\
        \omega_{n,\e}(\tau_0-\d_0,x)=v(\tau_0-\d,x)\ge - \psi_0(\tau_0-\d_0)
        \quad \text{for all }\ x\in[-n,\mathcal{M}(\tau_0-\d_0)],\\
        \omega_{n,\e}(\tau,-n)=v(\tau,-n)=-\p_0(\tau)\\
        \text{and}\quad 
        \omega_{n,\e}(\tau,\mathcal{M}(\tau))=v(\tau,\mathcal{M}(\tau))\ge -\psi_0(
        \tau)
        \quad \text{for all }\ \tau\in \bar{I}.
    \end{cases}
\end{equation}
where $\mathcal{R}^n:=\mathcal{R}\cap \mathcal{D}_T^n$. Note that, since $\mathcal{M}$ is smooth and $\p_0$ is of $C^{1,1}$, the lateral boundary of $\mathcal{M}$ is smooth and the prescribed lateral boundary data $-\p_0(\cdot)$ and $v(\cdot,\mathcal{M}(\cdot))$ belong to $C^{1,1}((\tau_0-\delta_0,\tau_0+\delta_0))$, and that the initial datum $v(\tau_0-\d_0,\cdot)$ lies in $W^{2,p}((-n,\mathcal{M}(\tau_0-\d_0)))$ for all $1<p<\infty$. Therefore, the existence of $\omega_{n,\varepsilon}$ in $W^{1,2}_{p}$ follows from the Schauder fixed point theorem \cite[Theorem 8.3]{Lieberman}.
Moreover, from the fact that $\b_{0,\e}\ge 0$, \eqref{U} and \eqref{psi12}, 
\[\pat_\tau(- \p_{0})-\mathcal{L}(-\p_0)- \b_{0,\e}(-\p_0+\p_0)\le \pat_\tau(- \p_{0})-r \p_{0} \le \varepsilon_0-\varepsilon_1 \le U
\quad \text{in }\ \mathcal{R}^n,\]
Therefore, by the comparison principle \cite[Theorem 1.1]{YangYan2008}, we have $\o_{n,\e}\ge-\p_0$ in $\mathcal{R}^n$.

By using the $W^{1,2}_p$-estimate in \cite[Theorem 7.30 and Corollary 7.31]{Lieberman} and proceeding as in the proof of Lemma~\ref{lem:v_existence}, we obtain the existence and uniqueness of a solution $\omega\in W^{1,2}_{p,{\rm loc}}(\mathcal{R})$ to the
corresponding obstacle problem on $\mathcal{R}$:
\begin{equation*}
    \begin{cases}
        \pat_\tau \omega -\mathcal{L}\omega=U
        \quad \text{for }\ \omega(\tau,x)>-\p_0(\tau),
        \\
        \pat_\tau \omega-\mathcal{L}\omega\ge U
        \quad \text{for }\  \omega(\tau,x)=-\p_0(\tau),
        \\
        \omega(\tau_0-\d_0,x)=v(\tau_0-\d_0,x)
        \quad \text{for all }\ x\in(-\infty,\mathcal{M}(\tau_0-\d)],
        \\
        \omega(\tau,\mathcal{M}(\tau))=v(\tau,\mathcal{M}(\tau))
        \quad \text{for all }\ \tau\in\bar{I}.
    \end{cases}
\end{equation*}
Note that, by the uniqueness,
$\omega=v$ in $\overline{\mathcal{R}}$
and, by the weak convergence, all uniform estimate for $\o_{n,\e}$ carries over directly to $v$.
Moreover, by following the same procedure as in the Lemma~\ref{estimate(2)}, $\pat_x \o_{n,\e}\ge 0$ in $\overline{\mathcal{R}}$ and $\omega_{n,\e}+\p_0$ also satisfies the following estimate in $\overline{\mathcal{R}^n}$:
\begin{equation}\label{omega_tau}
    -\tfrac{\g_1}{1-\g_1}(L_1^{\gam}-L_0^{\gam})e^{K_1(T-\tau)-\frac{1}{\g_1}x} -2q
    \le \pat_\tau \omega_{n,\e}+ \p'_0
    \le (\e_0 - \e_1)e^{-r\tau}+2q.
\end{equation}
Equipped with the above uniform estimates, we now proceed to derive the desired inequality at any point $(\tau_0,x_0)$ with  $\chi_0(\tau_0)< x_0 \le \mathcal{M}_0(\tau_0)$.
We take into account an auxiliary function $\Phi(\tau, x)=\Phi(\tau,x;x_0)$ for $(\tau,x)\in \overline{\mathcal{R}^n}$ given by
\[\Phi(\tau,x)
=e^{k\tau}\left( e^{\frac{2}{\g_1}\vert x-x_0 \vert}-\frac{2}{\g_1}\vert x-x_0 \vert-1 \right),\]
where $k=2(\frac{2}{\g_1^2}+\frac{1}{\g_1})\t^2+\frac{4}{\g_1}(\b+r)-r+4q+2$.
It is straightforward to check that $\Phi \in W^{1,2}_{p,{\rm loc}}(\mathcal{R}^n) \cap C(\overline{\mathcal{R}^n})$ for all $1<p<\infty$
    and hence with $\mathcal{L}_\b:=\mathcal{L}+\b'_{0,\e}(\cdots)$, direct calculation yields
    \begin{align*}
        \pat_\tau\Phi - \mathcal{L}_\b \Phi\ge
        \pat_\tau\Phi - \mathcal{L} \Phi
        & = e^{k \tau} \Big[
        (k + r)\Big(  e^{\frac{2}{\g_1}|x - x_0|} - \frac{2}{\g_1}|x - x_0| - 1 \Big)
        - \frac{2\t^2}{\g_1^2}
         e^{\frac{2}{\g_1}|x - x_0|} 
        \\
        & \quad 
        - \frac{2}{\g_1}\Big(\b - r + \frac{\t^2}{2}\Big) \operatorname{sign}(x - x_0)     
        \big(  e^{\frac{2}{\g_1}|x - x_0|} - 1 \big)
        \Big]
        \\
        &\ge e^{k\tau}\Big[
        \frac{k + r}{2}\Big(  e^{\frac{2}{\g_1}|x - x_0|} - \frac{4}{\g_1}|x - x_0| - 2\Big) 
        \\
        &\quad +\Big\{ \frac{k+r}{2}-\frac{2}{\g_1}(\b+r)-\Big(\frac{2}{\g_1^2}+\frac{1}{\g_1}\Big)\t^2 \Big\}  e^{\frac{2}{\g_1}|x - x_0|}\Big]
        \\
        &\ge e^{k\tau}\Big[
        -(k+r)\mathbf{1}_{\{|x - x_0|\le \g_1\}}
        + (2q+1)e^{\frac{2}{\g_1}|x - x_0|}\Big],
    \end{align*}
    where the first inequality follows from $\b'_{0,\e}\le 0$ and $\Phi\ge 0$.
Furthermore, let 
\[C_1
:=\Bigg[\frac{C_2(1-\g_1)(k+r)}
{L_1^{\frac{\g_1-\g}{\g_1}}
-L_0^{\frac{\g_1-\g}{\g_1}}}+\g_1\Bigg]e^{1+(\vert k\vert +K_1)T},\]
where $C_2>0$ is a large constant to be chosen later.
We then define $\Psi(\tau,x)=\Psi(\tau,x;x_0)$ for $(\tau,x)\in\overline{\mathcal{R}^n}$ by
\begin{align*}
    \Psi(\tau,x)&:=(\tau-\tau_0+\d_0)\left\{\pat_\tau \omega_{n,\e}(\tau,x)+\p'_0(\tau)\right\}
    \\
    &\qquad +C_1\pat_x \omega_{n,\e}(\tau,x)
    +C_2e^{-\frac{1}{\g_1}\max\{x_0,0\}}\Phi(\tau,x).
\end{align*}
Observe that by differentiating the equation \eqref{obs:pen,omega} with respect to $\tau$ and $x$, we see that
\[\partial_\tau (\pat_\tau\o_{n,\e}+\p'_0)-\mathcal{L}_\b(\pat_\tau\o_{n,\e}+\p'_0)=\p''_0+r\p'_0
\quad \text{and} \quad
\partial_\tau (\pat_x\o_{n,\e})-\mathcal{L}_\b(\pat_x\o_{n,\e})=U'(x)\]
in $\mathcal{R}_n$.
We then use the estimate \eqref{omega_tau} and see that $\Psi$ satisfies
\begin{align*}
    &\pat_\tau\Psi(\tau,x) -\mathcal{L}_\b\Psi(\tau,x)
    \\
    &\ge \pat_\tau \omega_{n,\e}(\tau,x)+\p'_0(\tau)
    +(\tau-\tau_0+\d_0)
    \{\p''_0(\tau)+r\p'_0(\tau)\}+C_1U'(x)
    \\
    &\quad +C_2e^{k\tau-\frac{1}{\g_1}\max\{x_0,0\}}
    \Big[
    -(k+r)\mathbf{1}_{\{|x - x_0|\le \g_1\}}
    + (2q+1)e^{\frac{2}{\g_1}|x - x_0|}\Big]
    \\
    &\ge -\tfrac{\g_1}{1-\g_1}(L_1^{\gam}-L_0^{\gam})e^{K_1(T-\tau)-\frac{1}{\g_1}x} 
    -2\{1+\d_0(r+1)\}q
    \\
    &\quad +C_1U'(x)+C_2e^{k\tau-\frac{1}{\g_1}\max\{x_0,0\}}
    \Big[
    -(k+r)\mathbf{1}_{\{|x - x_0|\le \g_1\}}+(2q+1)\Big]
\end{align*}
for all $(\tau,x)\in\mathcal{R}^n$.
Choosing $C_2>1$ large and $\d_0>0$ sufficiently small enough to satisfy $(2q+1)C_2e^{k\tau-\frac{1}{\g_1}\max\{x_0,0\}}>2q+1>2\{1+\d_0(r+1)\}q$, we deduce that
\begin{align*}
    \pat_\tau\Psi(\tau,x) -\mathcal{L}_\b\Psi(\tau,x)
    &\ge -\tfrac{\g_1}{1-\g_1}(L_1^{\gam}-L_0^{\gam})e^{K_1(T-\tau)-\frac{1}{\g_1}x} +C_1U'(x)
    \\
    &\quad +C_2e^{k\tau-\frac{1}{\g_1}\max\{x_0,0\}}
    \Big[
    -(k+r)\mathbf{1}_{\{|x - x_0|\le \g_1\}}\Big].
\end{align*}
Moreover, note
\[C_1U'(x)=\tfrac{\g_1}{1-\g_1}(L_1^{\gam}-L_0^{\gam})e^{1-\frac{1}{\g_1}x+(\vert k\vert +K_1)T}+C_2(k+r)e^{1-\frac{1}{\g_1}x+(\vert k\vert +K_1)T}\]
and 
$x-\max\{x_0,0\}\le x-x_0 \le \g_1$ in $\{\vert x-x_0 \vert \le \g_1\}$, implies that
\[e^{-\frac{1}{\g_1}\max\{x_0,0\}} \le e^{-\frac{1}{\g_1}x} 
\quad \text{in }\ \{\vert x-x_0 \vert \le \g_1\}.\]
This leads to the conclusion that $\partial_\tau \Psi - \mathcal{L}_\b\Psi \ge 0$ in $\mathcal{R}^n$. 

On the right lateral boundary of $\mathcal{R}^n$, we recall $\pat_x \omega_{n,\e}(\tau,\mathcal{M}(\tau))\ge 0$ for all $\tau\in I$ and the estimate \eqref{omega_tau} to see
\begin{align*}
    \Psi(\tau,\mathcal{M}(\tau))&=
    (\tau-\tau_0+\d_0) \left\{\pat_\tau \omega_{n,\e}(\tau,\mathcal{M}(\tau))
    +\p'_0(\tau)\right\}+C_1\pat_x \omega_{n,\e}(\tau,\mathcal{M}(\tau))
    \\
    &\qquad+C_2e^{-\frac{1}{\g_1}\max\{x_0,0\}}\Phi(\tau,\mathcal{M}(\tau))
    \\
    &\ge -\tfrac{2\d \g_1}{1-\g_1}(L_1^{\gam}-L_0^{\gam})e^{K_1(T-\tau)-\frac{\mathcal{M}(\tau)}{\g_1}}-4\d q
    \\
    &\qquad +C_2e^{k\tau-\frac{1}{\g_1}\max\{x_0,0\}}\Big( e^{\frac{2}{\g_1}\vert \mathcal{M}(\tau)-x_0 \vert}-\tfrac{2}{\g_1}\vert \mathcal{M}(\tau)-x_0 \vert-1 \Big).
\end{align*}
We note from \eqref{M01} that for all $\tau\in I$,
\begin{equation}\label{M1,M2:right}
    \mathcal{M}(\tau)-x_0
    \ge \mathcal{M}(\tau)-\mathcal{M}_0(\tau_0)
    \ge \inf_{\tau\in I}\mathcal{M}(\tau)-\sup_{\tau\in I}\mathcal{M}_0(\tau)
    \ge \eta.
\end{equation}
Therefore, since $x_0$ is bounded above by $\mathcal{M}$, and the function $\mathcal{M}$ is bounded, we choose $C_2$ sufficiently large to get $\Psi(\tau,\mathcal{M}(\tau))\ge 0$ for all $\tau\in I$.
On the left lateral boundary of $\mathcal{R}^n$,
since $\o_{n,\e}(\tau,-n)=-\p_0(\tau)$ and
$\omega_{n,\e}(\tau,x)\ge -\p_0(\tau)$ holds for all $(\tau,x)\in\mathcal{R}^n$, we have $\pat_x \omega_{n,\e}(\tau,-n)\ge 0$ for all $\tau\in I$. 
This yields that
\begin{align*}
    \Psi(\tau,-n)
    &\ge -\tfrac{2\d\g_1}{1-\g_1}
        (L_1^{\gam}-L_0^{\gam})e^{K_1(T-\tau)+\frac{n}{\g_1}}-4\d q
        \\
        &\qquad +C_2e^{-\frac{1}{\g_1}\max\{x_0,0\}}\Big( e^{\frac{2}{\g_1}\vert n+x_0\vert}-\tfrac{2}{\g_1}\vert n+x_0\vert-1 \Big).
\end{align*}
Moreover, since the assumption $-\frac{n}{4}<-X_c\le x_0$ implies $n+x_0>\frac{3}{4}n$, for sufficiently large $n\in\mathbb{N}$ we have $\Psi(\tau,-n)\ge 0$ for all $\tau\in I$.
Finally, on the initial boundary $\{0\}\times[-n,\mathcal{M}(\tau_0-\d_0)]$ of $\mathcal{R}^n$,
since $\pat_x \omega_{n,\e}(\tau_0-\d_0,x)=\pat_x v(\tau_0-\d_0,x)\ge 0$, we deduce that
$\Psi(\tau_0-\d,x)=C_1\pat_x \omega_{n,\e}(\tau_0-\d_0,x)+e^{-\frac{1}{\g_1}\max\{x_0,0\}}C_2\Phi(\tau_0-\d_0,x)\ge 0$.

Therefore, by applying the comparison principle \cite[Theorem 1.1]{YangYan2008}, we conclude that
for all $(\tau,x)\in \mathcal{R}^n$,
\begin{align*}
    \Psi(\tau,x)&=(\tau-\tau_0+\d_0)\left\{\pat_\tau \omega_{n,\e}(\tau,x)+\p'_0(\tau)\right\}
    \\
    &\qquad +C_1\pat_x \omega_{n,\e}(\tau,x)+C_2e^{-\frac{1}{\g_1}\max\{x_0,0\}}\Phi(\tau,x) \ge 0.
\end{align*}
By the weak convergence of $\omega_{n,\varepsilon}$ to $v$ in $W^{1,2}_{p,{\rm loc}}(\mathcal{R})$, we obtain that for almost every $(\tau,x)\in \mathcal{R}$,
\begin{align*}
    (\tau-\tau_{0}+\delta_0)\bigl\{\partial_{\tau}v(\tau,x)+\p_{0}'(\tau)\bigr\}
    + C_{1}\partial_{x}v(\tau,x)
    + C_{2}e^{-\frac{1}{\gamma_{1}}\max\{x_{0},0\}}\Phi(\tau,x)  \ge 0.
\end{align*}
Since $(\tau_{0},x_{0})\in \mathcal{D}_T \cap \{-\p_{0}<v<\p_{1}\}$ and $v$ is smooth in this set, putting $(\tau,x)=(\tau_0,x_0)$ in the previous inequality, we conclude that
\[
\delta_0 \bigl\{\partial_{\tau}v(\tau_{0},x_{0})+\p_{0}'(\tau_{0})\bigr\}
+ C_{1}\partial_{x}v(\tau_{0},x_{0})
\ge 0.
\]
Thus, we prove the first inequality in \eqref{eq:vderivatives}, with $(\tau,x)=(\tau_0,x_0)$, by choosing $C\ge \frac{C_1}{\delta_0}$. Note that $\delta_0$ depends on $\tau_0$.

On the other hand, to derive the second inequality  in \eqref{eq:vderivatives},
we introduce the auxiliary function $\widetilde{\Psi}(\tau,x)=\widetilde{\Psi}(\tau,x;x_0)$ for $(\tau,x)\in\mathcal{R}^n$ defined by
\[\widetilde{\Psi}(\tau,x):=(\tau-\tau_0+\d_0)\left\{\pat_\tau \omega_{n,\e}(\tau,x)+\p'_0(\tau)\right\}-C_3e^{\frac{1}{\g_1}x_0}\pat_x \omega_{n,\e}(\tau,x)-C_4\Phi(\tau,x),\]
where $C_3$ is a constant such that
\[C_3 = \displaystyle\frac{C_4(1-\g_1)(k+r)}
    {L_1^{\gam}-L_0^{\gam}}e^{1+\vert k\vert T},\]
and $C_4>0$ is a large constant to be chosen later.
Then, using the estimate \eqref{omega_tau}, we see that $\widetilde{\Psi}$ satisfies
\begin{align*}
    \pat_\tau\widetilde{\Psi}(\tau,x) -\mathcal{L}_\b\widetilde{\Psi}(\tau,x)
    &\le (\e_0-\e_1)e^{-r\tau}+2\{1+\d_0(r+1)\}q-C_3e^{\frac{1}{\g_1}x_0}U'(x)
    \\
    &\quad -C_4e^{k\tau}\Big[
        -(k+r)\mathbf{1}_{\{|x - x_0|\le \g_1\}}
        + 3qe^{\frac{2}{\g_1}|x - x_0|}\Big]
    \\
    &\le (\e_0-\e_1)e^{-r\tau}+2\{1+\d_0(r+1)\}q-3qC_4
    \\
    &\quad -C_4(k+r)e^{\vert k\vert T+1-\frac{x-x_0}{\g_1}}
    +C_4(k+r)e^{k\tau}\mathbf{1}_{\{|x - x_0|\le \g_1\}}.
\end{align*}
Observe that the term  
$(\e_0-\e_1)e^{-r\tau}$ is bounded above by $\e_0-\e_1$.
Thus, by choosing $C_4>1$ large
enough to satisfy $\e_0-\e_1<\frac{1}{2}qC_4$
and $\d_0(r+1)<\frac{1}{4}C_4$, we deduce that
$\pat_\tau\widetilde{\Psi} - \mathcal{L}_\b\widetilde{\Psi}\le 0$
in $\mathcal{R}^n$.

On the right lateral boundary of $\mathcal{R}^n$, we recall the estimate $\pat_x \omega_{n,\e}(\tau,\mathcal{M}(\tau))\ge 0$ and the estimate \eqref{omega_tau} to see
\begin{align*}
    \widetilde{\Psi}(\tau,\mathcal{M}(\tau))&=(\tau-\tau_0+\d_0) \left\{\pat_\tau \omega_{n,\e}(\tau,\mathcal{M}(\tau))
    +\p'_0(\tau)\right\}
    \\
    &\qquad -C_3e^{\frac{1}{\g_1}x_0}\pat_x \omega_{n,\e}(\tau,\mathcal{M}(\tau))-C_4\Phi(\tau,\mathcal{M}(\tau))
    \\
    &\le 2\d(\e_0-\e_1)+4\d q
    -C_4\left( e^{\frac{2}{\g_1}\vert \mathcal{M}(\tau)-x_0 \vert}-\frac{2}{\g_1}\vert \mathcal{M}(\tau)-x_0 \vert-1 \right).
\end{align*}
Applying \eqref{M1,M2:right} and taking $C_4$
 sufficiently large yields $\widetilde{\Psi}(\tau,\mathcal{M}(\tau))\le 0$ for all $\tau\in I$.
On the left lateral boundary of $\mathcal{R}^n$,
since $\o_{n,\e}(\tau,-n)=-\p_0(\tau)$ and
$\omega_{n,\e}\ge -\p_0$ holds in $\mathcal{R}^n$, we have $\pat_x \omega_{n,\e}(\tau,-n)\ge 0$ for all $\tau\in[\tau_0-\d,\tau_0+\d]$.
Therefore, for large $C_4>0$,
\begin{align*}
    \widetilde{\Psi}(\tau,-n)&=(\tau-\tau_0+\d_0) \left\{\pat_\tau \omega_{n,\e}(\tau,-n)+\p'_0(\tau)\right\}
    \\
    &\qquad -C_3e^{\frac{1}{\g_1}x_0}\pat_x \omega_{n,\e}(\tau,-n)-C_4\Phi(\tau,-n)
    \\
    &\le -C_4\left( e^{\frac{2}{\g_1}\vert n+x_0\vert}-\frac{2}{\g_1}\vert n+x_0\vert-1 \right).
\end{align*}
Therefore, we deduce $\widetilde{\Psi}(\tau,-n)\le 0$ for all $\tau\in I$.
Finally, on the initial boundary $\{0\}\times[-n,\mathcal{M}(\tau_0-\d)]$ of $\mathcal{R}^n$,
since $\pat_x \omega_{n,\e}(\tau_0-\d_0,x)=\pat_x v(\tau_0-\d_0,x)\ge 0$, we deduce that
$\widetilde{\Psi}(\tau_0-\d,x)=-C_3e^{\frac{1}{\g_1}x_0}\pat_x \omega_{n,\e}(\tau,x)
-C_4\Phi(\tau,x)\le 0$. 
Therefore, by applying the comparison principle \cite[Theorem 1.1]{YangYan2008}, we conclude that
$\widetilde{\Psi} \le 0$ in $\mathcal{R}^n$. 
This implies that, passing $\varepsilon\to 0^{+}$ and $n\to\infty$, for almost every $(\tau,x)\in \mathcal{R}$,
\begin{align*}
   (\tau-\tau_0+\d_0)\left\{\pat_\tau v(\tau,x)+\p'_0(\tau)\right\}
    -C_3e^{\frac{1}{\g_1}x_0}\pat_x v(\tau,x)+C_4\Phi(\tau,x) \le 0.
\end{align*}
In particular, by putting $(\tau,x)=(\tau_0,x_0)$, the continuity of $\pat_\tau v$ and $\pat_x v$ yields
\[
\d_0 \left(\pat_\tau v(\tau_0,x_0)+\p'_0(\tau_0)\right)
    -C_3e^{\frac{1}{\g_1}x_0}\pat_x v(\tau_0,x_0)\le 0.
\]
Therefore, we prove the second inequality in \eqref{eq:vderivatives}, with $(\tau,x)=(\tau_0,x_0)$, by choosing $C\ge \frac{C_3}{\delta_0}$.
\end{proof}

To derive the estimate near the free boundary $\partial\{v<\p_1\}$,
we formulate an analogous penalized problem in the domain
$(\tau_1,T)\times(\mathcal{M}(\cdot),n)$ and proceed as in the previous lemma.
The only difference is that the boundary condition on the right lateral boundary is prescribed by $\p_1$ and we use the fact that $x_0$ is bounded above by the constant $K_c>0$ with 
$c\le \tau_0-\d$ in the Lemma~\ref{reg:s_2}.
As a consequence, we obtain the following result.

\begin{lem}\label{estimate(4)}
Let $\Gamma_0$ and $\Gamma_1$ be the curves given in the previous subsection and $\mathcal{M}$ be the curve given in the previous lemma.
Define by \[\mathcal{M}_1(\tau):=\frac{\Gamma_0(\tau)+3\Gamma_1(\tau)}{4}
\quad \text{for all }\ \tau\in(0,T).\]
Then, for each $(\tau,x)$ with $\tau\in(\tau_1,T)$ and $\mathcal{M}_1(\tau)\le x < \chi_1(\tau)$,
there exists a constant $C=C(\tau,\d,\eta,)>0$, where $\d$ and $\eta$ are the constants given in the previous lemma,  such that
the following estimate holds:
\[
    -\,C\,\partial_{x} v(\tau,x)
    \;\le\;
    \partial_{\tau} v(\tau,x) - \p_{1}'(\tau)
    \;\le\;
    C\,e^{\frac{1}{\gamma_{1}}x}\,\partial_{x} v(\tau,x).
\]
\end{lem}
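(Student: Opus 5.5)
We mirror the proof of Lemma~\ref{estimate(3)}, now working near the upper obstacle $\p_1$. We fix $\tau_0\in(\tau_1,T)$ and $x_0$ with $\mathcal{M}_1(\tau_0)\le x_0<\chi_1(\tau_0)$. We choose $\d_0\in(0,\d)$ so small that $I:=(\tau_0-\d_0,\tau_0+\d_0)\subset(\tau_1,T)$, and set
\[
\mathcal{R}:=\{(\tau,x)\in I\times\mathbb{R}: x\ge \mathcal{M}(\tau)\}.
\]
By Lemma~\ref{reg:s_1}, $v>-\p_0$ on $\mathcal{R}$, since $\mathcal{M}>\Gamma_0$. By Lemma~\ref{reg:s_2} with $c=\tau_0-\d_0-\tau_1$, we have $v=\p_1$ on $\bar I\times[K_c,\infty)$. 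Hence $x_0<K_c$, and on $\mathcal{R}$ the problem \eqref{obs} reduces to a single upper-obstacle problem. For $n$ large (with $n/4>K_c$), we truncate to $\mathcal{R}^n:=\mathcal{R}\cap\{x<n\}$. There we pose the penalized problem
\[
\pat_\tau\omega_{n,\e}-\mathcal{L}\omega_{n,\e}-\b_{1,\e}(\omega_{n,\e}-\p_1)=U,
\]
with initial data $v(\tau_0-\d_0,\cdot)$, left data $v(\tau,\mathcal{M}(\tau))$ and right data $\omega_{n,\e}(\tau,n)=\p_1(\tau)$.

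Existence follows as before from \cite[Theorem 8.3]{Lieberman}. Comparison with $\p_1$ gives $\omega_{n,\e}\le\p_1$, using $\pat_\tau\p_1+r\p_1<\e_0-\e_1$ from \eqref{psi12}; more precisely, we compare against the supersolution side. Uniqueness of the limiting single-obstacle problem identifies the limit $\omega$ with $v$ on $\overline{\mathcal{R}}$. As in Lemmas~\ref{estimate(1)}–\ref{estimate(2)}, we obtain $\pat_x\omega_{n,\e}\ge0$, and
\[
-\tfrac{\g_1}{1-\g_1}(L_1^{\gam}-L_0^{\gam})e^{K_1(T-\tau)-\frac{x}{\g_1}}-2q\le\pat_\tau\omega_{n,\e}-\p_1'\le(\e_0-\e_1)e^{-r\tau}+2q.
\]
At the right boundary $x=n$, we have $\omega_{n,\e}=\p_1\ge\omega_{n,\e}$ inside, so $\pat_x\omega_{n,\e}(\tau,n)\ge0$, and moreover $\pat_\tau\omega_{n,\e}-\p_1'=0$ there.

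We then use the same barriers, with $\p_0'$ replaced by $-\p_1'$. For the lower bound we set
\[
\Psi:=(\tau-\tau_0+\d_0)(\pat_\tau\omega_{n,\e}-\p_1')+C_1\pat_x\omega_{n,\e}+C_2e^{-\frac{1}{\g_1}\max\{x_0,0\}}\Phi,
\]
with $\Phi$ as in Lemma~\ref{estimate(3)}. Differentiating the penalized equation gives
\[
(\pat_\tau-\mathcal{L}_\b)(\pat_\tau\omega_{n,\e}-\p_1')=-\p_1''-r\p_1',\qquad \mathcal{L}_\b:=\mathcal{L}+\b_{1,\e}'.
\]
The interior computation is then identical: $C_1U'$ absorbs the $e^{-x/\g_1}$ lower bound, and $C_2\Phi$ absorbs the constants. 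Since $x_0\le K_c$ is bounded above, the weight $e^{-\max\{x_0,0\}/\g_1}$ is bounded below uniformly, which lets us fix $C_2$ independently of $x_0$. Next we check the parabolic boundary:
\begin{itemize}
\item On the initial line, $\Psi\ge0$ by $\pat_xv\ge0$.
\item At $x=n$, the $\tau$-term vanishes and $\Phi$ is huge.
\item At $x=\mathcal{M}(\tau)$, we have $x_0-\mathcal{M}(\tau)\ge\mathcal{M}_1(\tau_0)-\mathcal{M}(\tau)\ge\eta$ by the analogue of \eqref{M01} for $\mathcal{M}_1$. This analogue holds by the same uniform-continuity argument, possibly shrinking $\d,\eta$. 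So $C_2\Phi$ dominates $2\d_0\cdot$(the bounded estimate), because $\mathcal{M}$ is bounded on $[0,T]$ and the $e^{-\mathcal{M}/\g_1}$ term is therefore bounded.
\end{itemize}
The comparison principle gives $\Psi\ge0$. Passing to the limit and evaluating at $(\tau_0,x_0)$, where $v$ is smooth, gives the left inequality with $C=C_1/\d_0$. The right inequality follows in the same way from
\[
\widetilde\Psi:=(\tau-\tau_0+\d_0)(\pat_\tau\omega_{n,\e}-\p_1')-C_3e^{\frac{x_0}{\g_1}}\pat_x\omega_{n,\e}-C_4\Phi\le0,
\]
using the upper bound $(\e_0-\e_1)e^{-r\tau}+2q$.

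The main obstacle is the boundary analysis. The near-obstacle side is now the right lateral boundary $x=n$, and we must verify the sign there using $\omega_{n,\e}=\p_1$ rather than $-\p_0$. In addition, the separation $\eta$ between $\mathcal{M}_1$ and $\mathcal{M}$ must hold uniformly on $(\tau_0-\d_0,\tau_0+\d_0)$. Otherwise the argument is a symmetric transcription, with the $x_0$-dependence controlled by $K_c$ instead of $X_c$.
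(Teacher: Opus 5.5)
Your proposal is correct and matches the paper's argument: the paper only sketches this lemma as the mirror image of Lemma~\ref{estimate(3)}. It poses the penalized problem on $(\tau_1,T)\times(\mathcal{M}(\cdot),n)$ with $\psi_1$ as right lateral data, and uses $x_0<\chi_1(\tau_0)\le K_c$ from Lemma~\ref{reg:s_2} to keep the weight $e^{-\max\{x_0,0\}/\gamma_1}$ bounded below, exactly as you do.

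One small correction: $\omega_{n,\varepsilon}\le\psi_1$ follows from the normalization $\beta_{1,\varepsilon}(0)=-(\varepsilon_0-\varepsilon_1)-q$, which gives $\psi_1'+r\psi_1-\beta_{1,\varepsilon}(0)\ge\varepsilon_0-\varepsilon_1>U$. It does not follow from $\partial_\tau\psi_1+r\psi_1<\varepsilon_0-\varepsilon_1$ in \eqref{psi12}, which points the wrong way for a supersolution.
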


\subsection{Regularity of the free boundary}
In this subsection, we establish that the free boundaries $\partial \{v > -\psi_0\}$ and $\partial \{v < \psi_1\}$ are Lipschitz continuous under the basic assumption that the obstacle functions satisfy $\psi_0, \psi_1 \in C^{1,1}([0,T])$. 
We emphasize that, throughout the previous sections, these regularity assumptions were sufficient to derive the comparison principles and the estimates required for the analysis of the obstacle problem. 
Furthermore, we show that the free boundaries are smooth provided that the obstacle functions are smooth.

We first show that the free boundaries $\chi_0$ and $\chi_1$ are locally Lipschitz continuous.
This step relies only on the $C^{1,1}$ regularity of the obstacles and the estimates obtained
earlier.
After establishing the local Lipschitz regularity of each free boundary, we apply the boundary Harnack inequality in neighborhoods of free boundary points inside the Lipschitz domains $\{v>-\psi_0\}$ and $\{v<\psi_1\}$. As a consequence, the regularity of each free boundary can be further improved, leading eventually to their smoothness.

In this subsection, we establish that the free boundaries $\partial \{v > -\psi_0\}$ and $\partial \{v < \psi_1\}$ are Lipschitz continuous, provided that the obstacle functions $\psi_0, \psi_1$ satisfy the basic $C^{1,1}([0,T])$ regularity. 
We first show that the parametrizations $\chi_0$ and $\chi_1$ of the free boundaries are locally Lipschitz continuous, a result that relies on the $C^{1,1}$ regularity of the obstacles and the estimates derived in the preceding sections. 
By applying the boundary Harnack inequality in neighborhoods of the free boundary points within the Lipschitz domains $\{v > -\psi_0\}$ and $\{v < \psi_1\}$, we first improve the regularity of the free boundaries beyond the Lipschitz class. 
Building upon this improved regularity, we further demonstrate that the free boundaries are smooth provided that the obstacle functions themselves are smooth.

\begin{thm}[Lipschitz continuity]
    The parametrizations $\chi_0$ and $\chi_1$  of the free boundaries $\partial \{v > -\psi_0\}$ and $\partial \{v < \psi_1\}$ of the obstacle problem \eqref{obs}, as described in \eqref{G_0} and \eqref{G_1}, are locally Lipschitz continuous.
\end{thm}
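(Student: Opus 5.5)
We prove the claim for $\chi_0$ and indicate the changes for $\chi_1$. Following the approach via level curves mentioned in the introduction, we approximate the free boundary by level sets of $w:=v+\psi_0$. Fix a compact interval $I\Subset(0,T)$. By Lemma~\ref{reg:s_1} and Lemma~\ref{reg:s_2}, $\chi_0$ is bounded on $I$: it lies in $[-X_c,\Gamma_0(\tau)]$, with $c$ chosen so that $I\subset[c,T]$. By the definition of $\mathcal{M}_0$ there is a margin, uniform on $I$, between $\chi_0(\tau)$ and $\mathcal{M}_0(\tau)$. Lemma~\ref{estimate(1)} gives $\partial_x w=\partial_x v>0$ in $\{\chi_0(\tau)<x\le\mathcal{M}_0(\tau)\}$, and $v$ is smooth there. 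Consequently, for small $s>0$ the level set $\{w=s\}\cap\{x\le \mathcal{M}_0\}$ is the graph of a unique function $x=\chi_0^s(\tau)$, $\tau\in I$. This function is $C^1$ by the implicit function theorem, and it is well defined for every $\tau\in I$ once $s$ is below $\min_I\big(v(\tau,\mathcal{M}_0(\tau))+\psi_0(\tau)\big)>0$. Implicit differentiation gives
\[
(\chi_0^s)'(\tau)=-\frac{\partial_\tau v(\tau,\chi_0^s(\tau))+\psi_0'(\tau)}{\partial_x v(\tau,\chi_0^s(\tau))}.
\]

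The key step is bounding this quotient uniformly in $s$. Lemma~\ref{estimate(3)} holds at every point with $\chi_0(\tau)<x\le\mathcal{M}_0(\tau)$, and its constant $C$ can be taken uniform for $\tau\in I$. It gives
\[
-C e^{\frac{1}{\gamma_1}x}\le -\frac{\partial_\tau v+\psi_0'}{\partial_x v}\le C .
\]
Here $x=\chi_0^s(\tau)\le \Gamma_0(\tau)\le\max_{[0,T]}\Gamma_0$, since the level set lies below $\mathcal{M}_0$. Hence $|(\chi_0^s)'|\le L:=C\max\{1,e^{\max\Gamma_0/\gamma_1}\}$ on $I$. The family $\{\chi_0^s\}_s$ is therefore uniformly bounded and equi-Lipschitz on $I$. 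By Arzel\`a--Ascoli, a subsequence $s_k\to0$ converges uniformly to an $L$-Lipschitz function $\tilde\chi$.

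It remains to show $\tilde\chi=\chi_0$. The functions $\chi_0^s(\tau)$ are nonincreasing as $s\downarrow0$ and bounded below by $\chi_0(\tau)$, because $\partial_x v>0$ above $\chi_0$. By continuity of $v$, the limit point satisfies $v(\tau,\tilde\chi(\tau))=-\psi_0(\tau)$, so $\tilde\chi(\tau)\le\chi_0(\tau)$ by the maximality in \eqref{G_0}. Combined with $\tilde\chi\ge\chi_0$, this gives $\tilde\chi=\chi_0$ on $I$, so $\chi_0$ is Lipschitz on $I$. Since $I$ was arbitrary, $\chi_0$ is locally Lipschitz.

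For $\chi_1$ on $I\Subset(\tau_1,T)$, the same argument applies to $w:=\psi_1-v$ with level sets in $\{\mathcal{M}_1\le x<\chi_1\}$ and Lemma~\ref{estimate(4)} replacing Lemma~\ref{estimate(3)}. Now the factor $e^{x/\gamma_1}$ is bounded because $\chi_1\le K_c$ by Lemma~\ref{reg:s_2}, with $c$ chosen so that $I\subset[\tau_1+c,T]$.

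The main difficulty is justifying the uniformity. The constant of Lemma~\ref{estimate(3)} must be uniform over $I$, which follows from its final remark. The level curves must also stay inside the strip where that estimate applies, which requires checking that $v+\psi_0$ is bounded below by a positive constant on $\{x=\mathcal{M}_0(\tau)\}$. This in turn follows from continuity of $v$, from $\mathcal{M}_0>\Gamma_0\ge\chi_0$, and from compactness of $\bar I$.
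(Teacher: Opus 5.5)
Your proof is correct and takes essentially the same route as the paper. Both use level curves of $v+\psi_0$ (resp.\ $\psi_1-v$) in the strip below $\mathcal{M}_0$ (resp.\ above $\mathcal{M}_1$), implicit differentiation, the two-sided bound of Lemma~\ref{estimate(3)} (resp.\ Lemma~\ref{estimate(4)}) with a constant uniform on compact time intervals, Arzel\`a--Ascoli, and identification of the limit with $\chi_0$, where your monotonicity-in-$s$ argument is more explicit than the paper's.

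There is one small slip. The level curves only satisfy $\chi_0^s\le\mathcal{M}_0$, not $\chi_0^s\le\Gamma_0$, so $e^{x/\gamma_1}$ should be bounded by $e^{\max_{[0,T]}\mathcal{M}_0/\gamma_1}$ as in the paper. Nothing else in your argument changes.
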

\begin{proof}
   We first prove the local Lipschitz continuity of $\chi_0$. 
    Let $I' \Subset I\Subset (0,T)$ be any open intervals, and choose $N\in \mathbb{N}$ large so that 
    \begin{equation}\label{lv,n}
        \frac{1}{N}\le \min\limits_{\tau\in I}\left\{v(\tau,\mathcal{M}_0(\tau))+\p_0(\tau)\right\},
    \end{equation}
where $\mathcal{M}_0$ is from \eqref{def:M01}.  For each $n \in \mathbb{N}$ with $n \ge N$, we consider the following level set:
\[ \ell_n := \{ (\tau, x) : \tau\in I,  \ v(\tau, x) + \psi_0(\tau) = \tfrac{1}{n} \}. \]
Since $\partial_x v > 0$ in the region $\{ -\psi_0 < v < \psi_1 \}$, the implicit function theorem implies that the parametrization of $\ell_n$  for $\tau\in I'$, denoted by $x_n:I'\to \mathbb{R}$, is well-defined and continuously differentiable. That is, $\ell_n = \{ (\tau, x_n(\tau)) : \tau \in I' \}$.   Note that $\chi_0(\tau)<x_n(\tau)$ for all $\tau\in I'$

  Clearly,    $-X_c\le x_n(\tau)\le \max_{\tau\in [0,T]}
\mathcal{M}_0(\tau)$ for all $\tau\in I'$, where $X_c$ is the constant in the Lemma~\ref{reg:s_2} with $c=\min_{\tau\in \overline{I'}}\tau$. 
    This implies that $\{x_n\}_{n=N}^\infty$ is uniformly bounded on $I'$.
    Furthermore, differentiating the both sides of the equation
    \begin{equation}\label{v,x_n}
        v(\tau,x_n(\tau))+\p_0(\tau)=\frac{1}{n}
    \end{equation}
     with respect to $\tau$, we get
    \begin{equation}\label{eq:x_n'}
        \pat_\tau v(\tau,x_n(\tau))+\p_0'(\tau)
        +\pat_x v(\tau, x_n(\tau))  x_n'(\tau)=0
        \quad \text{for all }\ \tau\in I'.
    \end{equation}
Then,    utilizing the the second inequality in \eqref{eq:vderivatives}, it follows that for all $n\ge N$,
    \begin{equation}\label{pat_tau x_n}
        \big\vert  x_n' (\tau)\big\vert 
        = \frac
        {\left\vert \pat_\tau v(\tau,x_n(\tau))+ \p_0'(\tau) \right\vert}
        {\left\vert \pat_x v(\tau,x_n(\tau))\right\vert}
        \le C\exp\left(\frac{\max\limits_{\tau\in[0,T]}\mathcal{M}_0(\tau)}{\g_1}\right)=: C'
        \quad \forall \tau\in I'.
    \end{equation}
    The above results yield that the sequence $\{x_n\}_{n=N}^\infty$ is equicontinuous in $I'$. Therefore, by the Arzelà–Ascoli theorem, there exists a subsequence $\{x_{n_i}\}_{i=1}^\infty$ that converges to a function $x:I'\to \mathbb{R}$ uniformly in $I'$.
   In inequality  \eqref{v,x_n}, replacing $x_n(\tau)$ with $x_{n_i}(\tau)$ and taking $i\to\infty$ yields $x(\tau)=\chi_0(\tau)$ for all $\tau\in I'$.
    Moreover, for every $\tau_1,\tau_2\in I'$ with $\tau_1<\tau_2$ , it follows from  the mean value theorem and \eqref{pat_tau x_n}  that
    \[\frac{\vert x_{n_i}(\tau_2)-x_{n_i}(\tau_1) \vert}{\tau_2-\tau_1}\le C'\] Then, taking $i\to\infty$ establishes
    the Lipschitz regularity of $\chi_0$ in $I'$. 
        
    The local Lipschitz regularity of $\chi_1$ can be shown analogously. We establish a sequence of level sets $\{(\tau,x):\tau\in (\tau_1,T),\ v(\tau,x)-\p_1(\tau)=-\frac{1}{n}\}$ for sufficiently large $n\in\mathbb{N}$, consider arbitrary open intervals $I'\Subset I\Subset (\tau_1,T)$, and follow the same procedure as in above. Note that for the uniform boundedness, we use the constant $K_c>0$, instead of $X_c$, with $c=\min_{\tau\in \overline{I'}} \tau$ in Lemma~\ref{reg:s_2} .
\end{proof}
\begin{thm}[Smoothness]
Let  $\chi_0$ and $\chi_1$  be the parametrization of the free boundaries $\partial \{v > -\psi_0\}$ and $\partial \{v < \psi_1\}$ of the obstacle problem \eqref{obs}, as described in \eqref{G_0} and \eqref{G_1}. If $\psi_0,\psi_1\in C^\infty((0,T))$, then
$\chi_0\in C^{\infty}((0,T))$ and $\chi_1\in C^{\infty}((\tau_1,T))$.
\end{thm}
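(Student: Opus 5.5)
We prove smoothness of $\chi_0$ near a fixed point $\tau_0\in(0,T)$; the argument for $\chi_1$ on $(\tau_1,T)$ is identical, using Lemma~\ref{estimate(4)} and the constant $K_c$ of Lemma~\ref{reg:s_2}. Set $w:=v+\psi_0$. Then $w\ge0$ and, near the free boundary, $w$ solves
\[
\partial_\tau w-\mathcal{L}w=f:=U+\psi_0'+r\psi_0 \quad\text{in } \{w>0\},\qquad w=\partial_x w=0 \quad\text{on } \partial\{w>0\}.
\]
The vanishing of $\partial_x w$ comes from $w\in W^{1,2}_{p,\mathrm{loc}}\subset C^{1/2+,1+}$ and $w\ge0$.

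On the part of $\{w>0\}$ near $(\tau_0,\chi_0(\tau_0))$, which lies in $\{x<\mathcal{M}_0\}$ by Lemma~\ref{reg:s_1}, $f$ is strictly positive. Indeed, $f(\tau,x)>0$ exactly when $x>\Gamma_0(\tau)$. At contact points we have $f\le 0$ a.e., so the free boundary sits on the side where the equation is nondegenerate. Since $\psi_0\in C^\infty$, the function $f$ is smooth. By the previous theorem, $\Omega:=\{(\tau,x):\tau\in I,\ x>\chi_0(\tau)\}$ is locally a Lipschitz domain in the parabolic sense, with graph direction $x$.

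The key step is to get $C^{1,\alpha}$ regularity of $\chi_0$ from a boundary Harnack inequality. Consider the two derivatives
\[
u_1:=\partial_x w=\partial_x v, \qquad u_2:=\partial_\tau w+C\partial_x w .
\]
Both are solutions of the linear constant-coefficient equation $\partial_\tau u-\mathcal{L}u=g$ in $\Omega$, with smooth right-hand sides $g=U'$ and $g=\psi_0''+r\psi_0'+CU'$. Both vanish continuously on $\partial\Omega\cap\{\tau\in I\}$, because $\nabla w=0$ there and $w\in C^1$ across the boundary by $W^{1,2}_p$ regularity with large $p$. Both are positive in $\Omega$: $u_1>0$ by Lemma~\ref{estimate(1)}, and $u_2\ge0$ by the left inequality of \eqref{eq:vderivatives}; if needed we enlarge $C$ to get strict positivity via the strong maximum principle. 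The right-hand sides are not zero, so we use the version of the boundary Harnack inequality with right-hand side from \cite{Kukuljan2022,TorresLatorre2024}. It applies because $u_1$ satisfies the nondegeneracy $u_1\ge c\,d$ in a cone, which follows from $f>0$, the Hopf-type lemma, and the Lipschitz bound.

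The inequality gives $u_2/u_1\in C^{\alpha}(\overline{\Omega}\cap B_r)$. Since
\[
\frac{\partial_\tau w}{\partial_x w}=\frac{u_2}{u_1}-C,
\]
the level curves $\{w=1/n\}$ have slopes $x_n'=-\partial_\tau w/\partial_x w$, and these extend $C^\alpha$ up to the boundary. This is the same level-curve argument as in the Lipschitz theorem, now with a uniform $C^\alpha$ modulus. Hence $\chi_0'$ exists and is $C^\alpha$, so $\chi_0\in C^{1,\alpha}$.

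For higher regularity we bootstrap. If $\chi_0\in C^{k,\alpha}$, then $\Omega$ is a $C^{k,\alpha}$ domain. The higher-order boundary Harnack inequality (also in \cite{TorresLatorre2024}) then gives $u_2/u_1\in C^{k,\alpha}$ up to $\partial\Omega$, and therefore $\chi_0'=-\partial_\tau w/\partial_x w\big|_{\partial\Omega}\in C^{k,\alpha}$, i.e. $\chi_0\in C^{k+1,\alpha}$. The smoothness of $\psi_0$ is what keeps all right-hand sides smooth at every stage. Induction yields $\chi_0\in C^\infty$.

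The main obstacle is checking the hypotheses of the boundary Harnack inequality. The difficulty is that the equation has a nonzero right-hand side and the boundary is only Lipschitz at the first step. If the version with right-hand side is not available directly, we would first replace $u_1,u_2$ by $u_i-h_i$, where $h_i$ are smooth particular solutions of $\partial_\tau h-\mathcal{L}h=g_i$ vanishing on $\partial\Omega$. The remaining technical issue is to keep $u_1$ comparable to the distance to the boundary, which we would derive from $f>0$ near the free boundary.
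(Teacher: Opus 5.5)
Your architecture is the paper's: level curves $\{v+\psi_0=1/n\}$ with slopes $-(\partial_\tau v+\psi_0')/\partial_x v$, then the boundary Harnack inequality with right-hand side \cite{TorresLatorre2024} to get $\chi_0\in C^{1+\delta/2}$, then the higher-order version \cite{Kukuljan2022} and induction. The shift $u_2=\partial_\tau w+C\partial_x w$ is harmless. However, your verification of the Harnack hypotheses rests on a sign error.

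You correctly note that $f=U+\psi_0'+r\psi_0>0$ exactly when $x>\Gamma_0(\tau)$. But $\chi_0\le\Gamma_0$ by Lemma~\ref{reg:s_1}, so $f\le 0$ at the free boundary, and $f$ is \emph{not} positive on $\{w>0\}$ near $(\tau_0,\chi_0(\tau_0))$. In fact $\chi_0<\Gamma_0$ strictly. On $\{x>\Gamma_0(\tau)\}$ one has $w>0$ and $(\partial_\tau-\mathcal{L})w=f>0$. If the free boundary touched the smooth curve $\Gamma_0$, Hopf's lemma would give $\partial_x w>0$ at the touching point, contradicting $\partial_x w=0$ on $\partial\{w>0\}$. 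Hence $f\le -c_0<0$ in a small cylinder $Q_r$. This is the standard nondegenerate obstacle-problem sign, $(\mathcal{L}-\partial_\tau)w=-f>0$.

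So the claim that $u_1\ge c\,d$ ``follows from $f>0$'' fails as written. The correct derivation uses $-f\ge c_0$: in $\{w>0\}\cap Q_r$,
\[
\tfrac{\theta^2}{2}\partial_{xx}w=-f+\partial_\tau w-\big(\beta-r+\tfrac{\theta^2}{2}\big)\partial_x w+rw\ge \tfrac{c_0}{2}
\]
once $\partial_\tau w$, $\partial_x w$, $w$ are small. Integrating from the free boundary gives $\partial_x w(\tau,x)\ge c\,(x-\chi_0(\tau))$. Since $\chi_0$ is Lipschitz in $\tau$, $x-\chi_0(\tau)$ is comparable to the parabolic distance to the boundary at small scales.

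That repair needs $\partial_\tau w\to 0$ at the free boundary, which is your second faulty step. You assert that $u_2$ vanishes continuously on $\partial\Omega$ because $w\in C^1$ by $W^{1,2}_p$ regularity. But $W^{1,2}_p\hookrightarrow C^{(1+\alpha)/2,1+\alpha}$ only gives continuity of $\partial_x w$; $\partial_\tau w$ is merely $L^p$. Its behaviour at the free boundary is exactly the delicate point, and the paper invokes \cite{Blanchet-et-al-2006} for it. In your setting you can repair it by using both sides of \eqref{eq:vderivatives}, not just the left one: $-C\partial_x v\le\partial_\tau v+\psi_0'\le Ce^{x/\gamma_1}\partial_x v$ forces $\partial_\tau w\to 0$ because $\partial_x w\to 0$ continuously. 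With these two corrections, your proof goes through along the same lines as the paper's.
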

\begin{proof}
        Let us first prove that $\chi_0$ is smooth. Fix a free boundary point $(\tau_0,\chi_0(\tau_0))$. For simplicity, we write $I_\varrho:=(t_0-\varrho,t_0+\varrho)$ and $Q_\varrho := I_{\varrho^2} \times (\chi_0(\tau_0)-\varrho,\chi_0(\tau_0)+\varrho)$ for $\varrho>0$.  Choose $r\in(0,1)$ sufficiently small so that $Q_r \Subset \{(\tau,x): \tau\in(0,T),\ x\in(-\infty,\mathcal{M}_0(\tau))\}$. 
We recall the parametrizations of the level curves $x_n=: \chi_{0,n}$, $n\ge N$, defined  in the proof of the previous theorem with $I'=I_{r^2}$.   Note that
    $v(\tau,\chi_{0,n}(\tau))+\p_0(\tau)=\frac{1}{n}$  and $(\tau,\chi_{0,n}(\tau))\in \{v>-\p_0\}\cap Q_r$ for all $\tau\in I_{r^2}$ and for all sufficiently large $n \ge N$. Without loss of generality, we may assume that $\chi_{0,n}$ converges to $\chi_0$ uniformly in $I_{r^2}$
   Recall that in $\{v>-\p_0\}\cap Q_r$, $v$ is smooth and satisfies the equation $\pat_\tau v -\mathcal{L} v =U$. By differentiating the previous equation equation with respect to $\tau$ and $x$, we deduce that
    $\pat_\tau v+\p'_0$ and $\pat_x v$ satisfies the following equations respectively:
    \begin{equation*}
        \begin{cases}
            \pat_\tau (\pat_\tau v+\p'_0) -\mathcal{L}  (\pat_\tau v+\p'_0) =\p''_0+r\p'_0
            \quad \text{in }\ \{v>-\p_0\}\cap Q_r,
            \\
            \pat_\tau v+\p'_0=0
            \quad \text{on }\ \pat\{v>-\p_0\},
        \end{cases}
    \end{equation*}
    \begin{equation*}
        \begin{cases}
            \pat_\tau (\pat_x v) -\mathcal{L} (\pat_x v) =U'
            \quad \text{in }\ \{v>-\p_0\}\cap Q_r,
            \\
            \pat_x v =0
            \quad \text{on }\ \pat\{v>-\p_0\},
        \end{cases}
    \end{equation*}
    where the boundary condition $\pat_\tau v = -\p'_0$ (in the trace sense) follows from the continuity of $\partial_\tau v$ at almost every free boundary point (see \cite[Theorem 1.1]{Blanchet-et-al-2006}).
    Since $\{v>-\p_0\}$ is a Lipschitz domain, we apply the boundary Harnack inequality \cite[Theorem 1.2]{TorresLatorre2024} to deduce that
    there exists $\d\in(0,1)$ and a constant $C$ that depends on $\Vert \pat_\tau v+\p'_0 \Vert_{L^\infty(Q_r)}$ and $\Vert \pat_x v \Vert_{L^\infty(Q_r)}$ such that
    \[
    \left\Vert \frac{\pat_\tau v+\p'_0}{\pat_x v} \right\Vert_{C^{\frac{\d}{2},\d}(\{v>-\p_0\}\cap Q_{r/2})}\le C,
    \]
    hence, by \eqref{eq:x_n'} and the uniform $C^{1,1}$ estimate of $\chi_{0,n}$ in \eqref{pat_tau x_n}, 
    \[
        \Vert \chi'_{0,n}(\tau) \Vert_{C^{\frac{\d}{2}}(I_{r^2/4})}
    =\left\Vert \frac{\pat_\tau v(\tau,\chi_{0,n}(\tau))+\p'_0(\tau)}{\pat_x v(\tau,\chi_{0,n}(\tau))} \right\Vert_{C^{\frac{\d}{2}}(I_{r^2/4})} \le C.
    \]
    Then, there exists a subsequence $\{\chi_{0,{n_j}}(\tau)\}_{j=1}^\infty$ such that $\chi_{0,{n_j}}'$ converges to a function $\mathcal{Y}$ in  $C^{\frac{\d}{2}}(I_{r^2/4})$.
 Therefore, we deduce that $\chi_0$ is differentiable and, in particular,
    $\chi'_0(\tau)=\displaystyle\lim_{k\to\infty}\chi'_{0,{n_k}}(\tau)=\mathcal{Y}(\tau)$ in $I_{r^2/4}$.
    Therefore, $\chi_0$ is a $C^{1+\frac{\d}{2}}$-function in $I_{r^2/4}$.
    
Since $\chi_0\in C^{1+\frac{\delta}{2}}(I_{r^2/4})$, the region
$\{v>-\psi_0\}\cap Q_{r/2}$ can be locally represented as a
parabolic domain whose boundary is of class
$C^{1+\frac{\delta}{2}}$.
We may therefore apply the higher order boundary Harnack inequality
\cite[Theorem 1.2]{Kukuljan2022} to conclude that the
$C^{1+\frac{\delta}{2}}$ norm of $\chi'_{0,n}$ is uniformly bounded
in $I_{r^2/16}$.
Extracting a subsequence again, we obtain that
$\chi'_0\in C^{1+\frac{\delta}{2}}(I_{r^2/16})$, and hence
$\chi_0\in C^{2+\frac{\delta}{2}}(I_{r^2/16})$.
Repeating this argument inductively yields that $\chi_0$ is smooth.
\end{proof}


All the results obtained so far can be reformulated in terms of the unique strong solution $\mathcal{Q}$ to the original obstacle problem~\eqref{obs:prev}.
\begin{thm}\label{thm:freeboundary_main}
    Let us denote $S_0(t)$ and $S_1(t)$ by $S_0(t)=e^{\chi_0(T-t)}$ and $S_1(t)=e^{\chi_1(T-t)}.$
    Then, the following properties are true:
    \begin{itemize}
        \item[(a)] $\mathcal{Q}\in W^{1,2}_{p,{\rm loc}}(\Omega_T)\cap C(\overline{\Omega_T})$, $1<p<\infty$.
        \item[(b)] $S_0\in C^\infty((0,T))$ and $S_1\in C^\infty((0,t_1))$.
    \end{itemize}
\end{thm}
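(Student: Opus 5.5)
The plan is to transport everything back through the change of variables $\tau=T-t$, $x=\ln y$, $v(\tau,x)=e^{-x}\mathcal{Q}(T-\tau,e^x)$. In other words, we define
\[
\mathcal{Q}(t,y):=y\,v(T-t,\ln y),
\]
where $v$ is the unique solution of \eqref{obs} given by Lemma~\ref{lem:v_existence}. We then read off (a) and (b) from the corresponding statements for $v$, $\chi_0$ and $\chi_1$.

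For (a), the map $(t,y)\mapsto(T-t,\ln y)$ is a smooth diffeomorphism from $\Omega_T$ onto $\mathcal{D}_T$. On any compact $K\Subset\Omega_T$ its image is compact, and its Jacobian is bounded above and below there; $y$ and $1/y$ are also bounded on $K$. The chain rule then gives the following.
\begin{itemize}
\item $\partial_t\mathcal{Q}=-y\,\partial_\tau v$ and $\partial_y\mathcal{Q}=v+\partial_x v$.
\item $\partial_{yy}\mathcal{Q}=y^{-1}(\partial_x v+\partial_{xx}v)$.
\end{itemize}
Hence $v\in W^{1,2}_{p}$ on the image of $K$ implies $\mathcal{Q}\in W^{1,2}_p(K)$, so $\mathcal{Q}\in W^{1,2}_{p,\rm loc}(\Omega_T)$.

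Continuity on $\overline{\Omega_T}$ comes from continuity of $v$ on $\overline{\mathcal{D}_T}$, together with the bound $|\mathcal{Q}|\le \max(\|\psi_0\|_\infty,\|\psi_1\|_\infty)\,y$. This bound makes $\mathcal{Q}$ extend continuously by $0$ at $y=0$. The same identities show that $\partial_t\mathcal{Q}+\mathfrak{L}\mathcal{Q}+\mathcal{U}=-y\,(\partial_\tau v-\mathcal{L}v-U)$, and that $\mathcal{Q}=\phi_1 y$ if and only if $v=\psi_1$, and $\mathcal{Q}=-\phi_0 y$ if and only if $v=-\psi_0$. So $\mathcal{Q}$ is a strong solution of \eqref{obs:prev}. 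Uniqueness transfers back through the same bijection.

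For (b), the Smoothness theorem gives $\chi_0\in C^\infty((0,T))$ and $\chi_1\in C^\infty((\tau_1,T))$. Composing with the smooth maps $t\mapsto T-t$ and $\exp$ shows that $S_0(t)=e^{\chi_0(T-t)}$ is $C^\infty$ on $\{t: T-t\in(0,T)\}=(0,T)$. Likewise $S_1$ is $C^\infty$ on $\{t:T-t\in(\tau_1,T)\}=(0,T-\tau_1)=(0,t_1)$, since $\tau_1=T-t_1$.

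I would also note that, via Lemma~\ref{reg:s_1} and the definitions \eqref{G_0}, \eqref{G_1}, these curves parametrize the switching boundaries $\{\mathcal{Q}=-\phi_0 y\}$ and $\{\mathcal{Q}=\phi_1 y\}$ in the original variables.

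The only point needing care is that the smoothness theorem assumes $\psi_i\in C^\infty$. I would therefore state (b) under that hypothesis on the $\phi_i$. Without it, only local Lipschitz continuity of $S_0$ and $S_1$ (from the Lipschitz theorem) is inherited. Otherwise the argument is routine bookkeeping of the chain rule, which I expect to be the main, though mild, obstacle. In particular, one must check that local $W^{1,2}_p$ in $x$ pulls back correctly near $y\to0$ and $y\to\infty$, which is exactly why only the local version is claimed.
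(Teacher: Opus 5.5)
Your proposal is correct and follows the paper's route. The paper only remarks that the results for $v$, $\chi_0$, $\chi_1$ can be reformulated in terms of $\mathcal{Q}(t,y)=y\,v(T-t,\ln y)$. Your chain-rule identities, notably $\partial_t\mathcal{Q}+\mathfrak{L}\mathcal{Q}+\mathcal{U}=-y(\partial_\tau v-\mathcal{L}v-U)$, the bound $|\mathcal{Q}|\le C y$ giving continuity at $y=0$, and the interval bookkeeping $(\tau_1,T)\mapsto(0,t_1)$, supply exactly the omitted details.

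Your caveat is also right. Part (b) silently relies on the smoothness theorem's hypothesis $\psi_0,\psi_1\in C^\infty$, i.e.\ smooth $\phi_i$. Under only the standing $C^{1,1}$ assumption, $S_0$ and $S_1$ are just locally Lipschitz.
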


\section{Optimal Strategies}\label{sec:strategy}

In this section, we briefly characterize the optimal strategies.
Since the verification argument, the duality theorem, and the recovery of the primal optimal controls follow the same line of reasoning as in~\cite[Section~5]{ZJ2025}, we state only the main results and omit the proofs.

By Theorem~\ref{thm:freeboundary_main}, the obstacle problem~\eqref{obs:prev} admits a unique strong solution $\mathcal{Q}\in W^{1,2}_{p,\mathrm{loc}}(\Omega_T)\cap C(\overline{\Omega_T})$, $1<p<\infty$, and the associated free boundaries $S_0$ and $S_1$ are smooth on their respective domains. Hence, by Theorem~\ref{thm:auxiliary_problem}, the system~\eqref{eq:auxiliary_problem} admits a unique strong solution pair $(P_0,P_1)$ satisfying the variational inequalities~\eqref{eq:vIs}. Adapting the verification argument in~\cite[Theorem~5.2]{ZJ2025} to the present setting with time-dependent switching costs, we obtain $P_j(0,y)=J(j,y)$ for all $(j,y)\in\{0,1\}\times(0,\infty)$.

For $y>0$, let $Y_t^y:=ye^{\beta t}\Upsilon_t$. The optimal switching strategy for the dual problem with initial state $(j,y)$ is determined by the first hitting times of the free boundaries and is given by
\begin{equation}\label{eq:optimal_job_state}
\widehat{\eta}_t^{(j,y)}
:=
j{\bf I}_{(0,\hat\tau_1]}(t)
+\sum_{n\ge1}\bigl(1-\widehat{\eta}^{(j,y)}_{\hat\tau_n^-}\bigr)
{\bf I}_{(\hat\tau_n,\hat\tau_{n+1}]}(t),
\end{equation}
where $\hat\tau_0=0$,
$\hat\tau_1=T\wedge\inf\{t>0:Y_t^y<S_0(t)\}$ if $j=0$ and $y>S_0(0)$,
$\hat\tau_1=0$ if $j=0$ and $y\le S_0(0)$,
$\hat\tau_1=0$ if $j=1$ and $y\ge S_1(0)$, and
$\hat\tau_1=t_1\wedge\inf\{t>0:Y_t^y>S_1(t)\}$ if $j=1$ and $y<S_1(0)$.
For $n\ge1$, we set
$\hat\tau_{n+1}=T\wedge\inf\{t>\hat\tau_n:Y_t^y<S_0(t)\}$ when $\widehat{\eta}^{(j,y)}_{\hat\tau_n^-}=0$, and
$\hat\tau_{n+1}=t_1\wedge\inf\{t>\hat\tau_n:Y_t^y>S_1(t)\}$ when $\widehat{\eta}^{(j,y)}_{\hat\tau_n^-}=1$.

We summarize the main result below.

\begin{thm}\label{thm:optimal_strategies}
For a given pair $(j,w)\in\{0,1\}\times(\iota(j),\infty)$, the value function satisfies the duality relation
\begin{equation}\label{eq:duality_main}
V(j,w)=\inf_{y>0}\bigl(J(j,y)+yw\bigr).
\end{equation}
Moreover, there exists a unique $y^*>0$ such that $w=-\partial_y J(j,y^*)$ and $V(j,w)=J(j,y^*)+y^*w$.

Let $Y_t^*:=y^*e^{\beta t}\Upsilon_t$ and $\eta_t^*:=\widehat{\eta}_t^{(j,y^*)}$. Then, on $[0,T)$, the optimal consumption, investment, and job-switching strategies are given by
\begin{equation}\label{eq:optimal_controls_main}
c_t^*=L_{\eta_t^*}^{\gam}(Y_t^*)^{-1/\gamma_1},
\qquad
\pi_t^*=\frac{\theta}{\sigma}Y_t^*\,\partial_{yy}P_{\eta_t^*}(t,Y_t^*),
\qquad
\xi_t^*=\z_{\eta_t^*}.
\end{equation}
Furthermore, the corresponding optimal wealth process satisfies $W_t^*=-\partial_y P_{\eta_t^*}(t,Y_t^*)$ for $0\le t<T$. After retirement, the optimal strategy coincides with the classical Merton rule: $c_t^*=\bar L^{\gam}(Y_t^*)^{-1/\gamma_1}$, $W_t^*=-J_R'(Y_t^*)$, and $\pi_t^*=\frac{\theta}{\sigma\gamma_1}W_t^*$ for $t\ge T$.
\end{thm}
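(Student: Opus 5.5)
The plan is to follow the duality route of \cite[Section~5]{ZJ2025}, adapting each step to the time-dependent costs $\phi_i(t)$.

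\textbf{Dual value function.} By Theorem~\ref{thm:freeboundary_main} and Theorem~\ref{thm:auxiliary_problem}, the pair $(P_0,P_1)$ exists. I will first run the verification argument for the switching problem, Problem~\ref{pr:dual}. Apply the generalized It\^o formula (valid for $W^{1,2}_{p,\mathrm{loc}}$ functions, $p$ large) to $e^{-\beta t}P_{\eta_t}(t,Y_t^y)$ along an arbitrary $\xi\in\mathcal{O}_j$. Between switches, the first inequality in each line of \eqref{eq:vIs} gives the running-reward bound. At a switch time $t$, the obstacle inequality $P_{1-i}\le P_i+\phi_i(t)y$ bounds the jump, and this bound is exactly the discounted cost $e^{-\beta t}Y_t\phi_i(t)$ appearing in \eqref{dual:J}; the time dependence of $\phi_i$ enters only through this pointwise inequality. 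Localize with stopping times and remove the localization using growth bounds on $P_j$ (of order $y^{-(1-\gamma_1)/\gamma_1}+y$) and moments of $Y$. This gives $P_j(0,y)\ge J(j,y)$. Equality follows from the strategy \eqref{eq:optimal_job_state}: before each hitting time of $S_0$ or $S_1$ the equation holds with equality, and at the hitting time the obstacle is attained. Continuity of $S_0,S_1$ ensures the hitting times are stopping times, and $S_0<S_1$ ensures they accumulate only finitely often a.s. The cutoff at $t_1$ is consistent with Lemma~\ref{reg:s_1}, since $\mathcal{SR}_1$ is empty for $\tau\le\tau_1$.

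\textbf{Duality relation and $y^*$.} The Lagrangian inequality \eqref{eq:Lagrangian} gives weak duality, $V(j,w)\le J(j,y)+yw$ for every $y>0$. Next I need strict convexity of $y\mapsto J(j,y)$ together with its boundary behaviour. Convexity holds because $J$ is a supremum of functionals that are convex in $y$, since $\widetilde u_j$ is convex and the remaining terms are linear in $Y^y$. Strictness comes from the strictly convex $\widetilde u$ part. For the limits, $-\partial_yJ\to\infty$ as $y\to0$ from the $\widetilde u$ terms. As $y\to\infty$, $-\partial_yJ\to\iota(j)$: the agent eventually sits in job $\zeta_0$, and the boundary value for $j=1$ includes the switching cost $\phi_1(0)$. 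Together these give a unique $y^*$ with $w=-\partial_yJ(j,y^*)$ for every $w>\iota(j)$. This limit computation is where Assumption~\ref{as2} and the definition of $\iota(j)$ must match, and I regard it as the main delicate point. I would verify it by estimating $\partial_y P_j$ for large $y$ using Lemma~\ref{reg:s_2}.

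\textbf{Primal strategy.} Define $c^*$ from the first-order condition of $\widetilde u_{\eta^*}$, and set $W_t^*=-\partial_yP_{\eta^*_t}(t,Y^*_t)$. Applying It\^o to $\partial_yP$, whose equation is obtained by differentiating \eqref{eq:auxiliary_problem} in $y$, shows that $W^*$ solves the wealth SDE with $\pi^*=\frac{\theta}{\sigma}Y^*\partial_{yy}P$. This requires $\partial_{yy}P_j$ to be meaningful along the path, which holds since $P_j\in W^{1,2}_{p,\mathrm{loc}}$ and $Y^*$ has a density. At switching times, the jump of $W^*$ equals the cost, because $\partial_yP_{1-i}=\partial_yP_i+\phi_i(t)$ on the contact set. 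The budget \eqref{eq:budget} holds with equality, and admissibility follows from $\partial_x v\ge0$ (Lemma~\ref{estimate(1)}). Hence $E(j,w;c^*,\pi^*,\xi^*)=J(j,y^*)+y^*w\ge V$, which closes the duality. Post-retirement quantities follow from Merton's solution with $J_R$.
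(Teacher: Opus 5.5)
Your three steps (verification of $P_j(0,\cdot)=J(j,\cdot)$, minimization of $J(j,y)+yw$, recovery of the primal controls) are the route the paper indicates by deferring to \cite[Section~5]{ZJ2025}, so the overall structure is right. Two of your justifications, however, do not hold as written.

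\textbf{Admissibility.} The borrowing constraint (iii) does not follow from $\partial_x v\ge 0$. In the original variables $v=\mathcal{Q}/y$, so that lemma only says $y\,\partial_y\mathcal{Q}\ge\mathcal{Q}$. This relates $-\partial_yP_1$ to $-\partial_yP_0$, but gives no lower bound on $W_t^*=-\partial_yP_{\eta_t^*}(t,Y_t^*)$ itself. One way to get the bound is as follows.
\begin{itemize}
\item $P_j(t,\cdot)$ is convex, by the same sup-of-convex-functionals argument you use for $J$. Hence $-\partial_yP_j(t,\cdot)$ is nonincreasing, and its infimum is its limit as $y\to\infty$.
\item For $j=0$ this limit is $-\varepsilon_0\frac{1-e^{-r(T-t)}}{r}$.
\item For $j=1$ and $t<t_1$, Lemma~\ref{reg:s_2} gives $\mathcal{Q}=\phi_1(t)y$ for large $y$. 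So $-\partial_yP_1=-\partial_yP_0+\phi_1(t)\to-\varepsilon_0\frac{1-e^{-r(T-t)}}{r}+\phi_1(t)$.
\item For $j=1$ and $t\ge t_1$ there is no switch back, and the limit is $-\varepsilon_1\frac{1-e^{-r(T-t)}}{r}$.
\end{itemize}
Alternatively, write $W_t^*$ as the conditional budget expectation. Then use that $e^{-rt}(\mathscr{E}(t)-\phi_1(t))$ is decreasing (Assumption~\ref{as2}(ii)) to show that a later $1\to0$ switch at cost $\phi_1(s)$ cannot push the bound below $-\varepsilon_0\frac{1-e^{-r(T-t)}}{r}+\phi_1(t)$. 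This is the one place on the primal side where the time dependence of $\phi_1$ really enters. It also shows that the cutoff between the two regimes in (iii) is $t_1$.

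\textbf{The limit as $y\to\infty$.} Your own heuristic (switch immediately at cost $\phi_1(0)$, then stay in $\zeta_0$) gives
$\lim_{y\to\infty}(-\partial_yJ(1,y))=-\frac{\varepsilon_0(1-e^{-rT})}{r}+\phi_1(0)$.
This is strictly below $\iota(1)=-\frac{\varepsilon_1(1-e^{-rT})}{r}+\phi_1(0)$; the gap is $\mathscr{E}(0)>0$. So the claim $-\partial_yJ\to\iota(j)$ is false for $j=1$. What you need, and what is true, is $\lim_{y\to\infty}(-\partial_yJ(j,y))\le\iota(j)$. That still gives a unique $y^*$ for every $w>\iota(j)$, so prove the inequality rather than equality.

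\textbf{It\^o for $\partial_yP$.} This is a smaller point in the primal step. $P_j\in W^{1,2}_{p,\mathrm{loc}}$ does not license It\^o's formula for $\partial_yP_j$. Differentiating \eqref{eq:auxiliary_problem} in $y$ across a free boundary is also not legitimate, because its right-hand side jumps there. The correct reason is this: while $\eta_t^*=0$ (resp.\ $1$), the path stays in $\{y>S_0(t)\}$ (resp.\ $\{y<S_1(t)\}$, with $S_1\equiv\infty$ for $t\ge t_1$). In that region the right-hand side of the equation for $P_0$ (resp.\ $P_1$) vanishes and $P_j$ is smooth. Apply It\^o there with localization, pass to the hitting time using continuity of $\partial_yP_j$, and handle the jump with $\partial_y\mathcal{Q}=-\phi_0$ or $\partial_y\mathcal{Q}=\phi_1$ on the contact set.
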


The proof follows the same steps as in~\cite[Section~5]{ZJ2025}: one first identifies $(P_0,P_1)$ with the dual value functions via verification, then establishes~\eqref{eq:duality_main} by minimizing $J(j,y)+yw$ over $y>0$, and finally recovers the primal optimal controls from the first-order condition and the dual state process.


\bibliographystyle{siamplain}
\bibliography{library}

@Article{TangYong1993,
  author  = {S. Tang and J. Yong},
  title   = "{Finite horizon stochastic optimal switching and impulse controls with a viscosity solution approach}",
  journal = {Stochastics and Stochastics Reports},
  volume  = {45},
  number  = {3--4},
  pages   = {145--176},
  year    = {1993},
  month   = {},
}

@article{ZJ2025,
  author  = {Zhou Yang and Junkee Jeon},
  title   = {A Problem of Finite-Horizon Optimal Switching and Stochastic Control for Utility Maximisation},
  journal = {Finance and Stochastics},
  year    = {2026},
  volume  = {30},
  number  = {1},
  pages   = {59--118}, 
}

@Article{Yi,
	author = {F. Yi and Z. Yang},
	title = "{A variational inequality arising from European option pricing with transaction costs}",
	journal = {Science in China Series A: Mathematics},
	volume = {51},
	number = {5},
	pages = {935-954},
	year = {2008},
	month = {},
}

@Book{AS,
	author =	{M. Abramowitz and I. Stegun},
	title =	{Handbook of Mathematical Function},
	year =	 {1972},
	publisher= {Dover},
}

@Article{SKS,
	author = {G. Shim and J. Koo and Y. Shin},
	title = "{Reversible Job-Switching Opportunities and Portfolio
	Selection}",
	journal = {Applied Mathematics and Optimization},
	volume = {77},
	number = {},
	pages = {197-228},
	year = {2018},
	month = {},
}

@Article{M69,
	author = {R. Merton},
	title = "{Lifetime Portfolio Selection under Uncertainty: The continuous-time Case}",
	journal = {Review of Economics and Statistics},
	volume = {51},
	number = {3},
	pages = {247-257},
	year = {1969},
	month = {},
}

@Article{M71,
	author = {R. Merton},
	title = "{Optimum Consumption and Portfolio Rules in a Continuous-time Model}",
	journal = {Journal of Economic Theory},
	volume = {3},
	number = {1-2},
	pages = {373-413},
	year = {1971},
	month = {},
}

@Book{L,
	author =	 {D. G. Luenberger},
	title =	 "{Optimization by Vector Space Methods}",
	year =	 {1969},
	publisher= {John Wiley \& Sons, Inc.},
}

@Article{DY09,
	author = {M. Dai and F. Yi},
	title = "{Finite horizon optimal investment with transaction costs: a parabolic double obstacle problem}",
	journal = {Journal of Differential Equations},
	volume = {246},
	number = {4},
	pages = {1445-1469},
	year = {2009},
	month = {},
}

@Book{Lieberman,
	author =	{G. Lieberman},
	title =	{Second Order Parabolic Differential Equations},
	year =	 {1996},
	publisher= {World Scientific Singapore},
}

@article{Blanchet-et-al-2006,
	author = {A. Blanchet and J. Dolbeault and R. Monneau},
	title = "{On the continuity of the time derivative of the solution to the parabolic obstacle problem with variable coefficients}",
	journal = {Journal de Math\'{e}matiques Pures et Appliqu\'{e}es},
	volume = {85},
	number = {3},
	pages = {371-414},
	year = {2006},
	month = {11},
}

@article{Qi25,
title = {Job switching and bequest motives in an optimal consumption–investment model under inflation and mortality risks},
journal = {Economic Modelling},
volume = {153},
pages = {107307},
year = {2025},
issn = {0264-9993}, 
author = {Qi Li and Yong Hyun Shin and Ji-Hun Yoon}
}

@article{ShimJeon2025,
  author    = {Gyoocheol Shim and Junkee Jeon},
  title     = {Optimal Consumption and Investment with a Costly Reversible Job-Switching Option},
  journal   = {Mathematical Methods of Operations Research},
  year      = {2025},
  volume    = {101},
  number    = {3},
  pages     = {459--506},
  issn      = {1432-5217},
}

@article{JP23,
title = {Optimal job switching and retirement decision},
journal = {Applied Mathematics and Computation},
volume = {443},
pages = {127777},
year = {2023},
issn = {0096-3003}, 
author = {Junkee Jeon and Kyunghyun Park}
}

@Article{Dai2010,
author = {Dai, M. and Zhang, Q. and Zhu, Q. J.},
title = {Trend Following Trading under a Regime Switching Model},
journal = {SIAM Journal on Financial Mathematics},
volume = {1},
number = {1},
pages = {780-810},
year = {2010}
}

@article{Han2024,
author = {Han, Xiaoru and Yi, Fahuai and Zhang, Jianbo},
title = {A Reversible Investment Problem with Capacity and Demand in Finite Horizon: Free Boundary Analysis},
journal = {SIAM Journal on Control and Optimization},
volume = {62},
number = {2},
pages = {1207-1234},
year = {2024},
}

@article{CHEN2012928,
title = {Parabolic variational inequality with parameter and gradient constraints},
journal = {Journal of Mathematical Analysis and Applications},
volume = {385},
number = {2},
pages = {928-946},
year = {2012},
issn = {0022-247X},
author = {Xiaoshan Chen and Yingshan Chen and Fahuai Yi}
}

@article{an2025,
title = {Optimal portfolio and retirement decisions with costly job switching options},
journal = {Applied Mathematics and Computation},
volume = {491},
pages = {129215},
year = {2025},
issn = {0096-3003}, 
author = {Jongbong An and Junkee Jeon and Takwon Kim}
}

@article{Qi26,
title = {The effects of inflation risk on voluntary retirement and job switching by a martingale approach},
journal = {Mathematical Control and Related Fields},
volume = {16},
number = {0},
pages = {1-39},
year = {2026},
issn = {2156-8472},
author = {Qi Li and Yong Hyun Shin and Ji-Hun Yoon},
}

@article{SS14,
title = {An optimal job, consumption/leisure, and investment policy},
journal = {Operations Research Letters},
volume = {42},
number = {2},
pages = {145-149},
year = {2014},
issn = {0167-6377},
author = {Gyoocheol Shim and Yong Hyun Shin}
}

@article{Lee-et-al-2019,
	author = {H.S. Lee and G. Shim and Y.H. Shin},
	title = "{Borrowing Constraints, Effective Flexibility in Labor Supply, and Portfolio Selection}",
	journal = {Mathematics and Financial Economics},
	volume = {13},
	number = {2},
	pages = {173-208},
	year = {2019},
	month = {},
}

@Book{Ladyzhenskaya,
	author = {O. A. Ladyzhenskaya and V. A. Solonnikov and N. N. Ural'tseva},
	title = "{Linear and Quasi-linear Equations of Parabolic Type}",
	year = {1968},
	publisher = {American Mathematical Society},
	address = {Providence, RI}
}

@Book{Friedman,
  author    = {A. Friedman},
  title     = "{Partial Differential Equations of Parabolic Type}",
  publisher = {Robert E. Krieger Publishing Company, Inc.},
  year      = {1964},
  address   = {New York}
}

@article{YangYan2008,
author = {Yang, Zhou and Yan, Huiwen},
year = {2008},
month = {01},
pages = {},
title = {A comparison principle of a system of variational inequalities in unbounded set},
volume = {2008},
journal = {Journal of South China Normal University. Natural Science Edition}
}

@article{TorresLatorre2024,
  author = {Torres-Latorre, Clara},
  year = {2024},
  month = {08},
  pages = {73},
  title = {Parabolic Boundary Harnack Inequalities with Right-Hand Side},
  volume = {248},
  journal = {Archive for Rational Mechanics and Analysis},
  number = {5}, 
}

@article{Kukuljan2022,
  author  = {Teo Kukuljan},
  title   = {Higher order parabolic boundary Harnack inequality in $C^1$ and $C^{k, \alpha}$ domains},
  journal = {Discrete and Continuous Dynamical Systems},
  volume  = {42},
  number  = {6},
  pages   = {2667--2698},
  year    = {2022},
  issn    = {1078-0947}
}

@article{DA,
author = {De Angelis, Tiziano and Stabile, Gabriele},
title = {On Lipschitz Continuous Optimal Stopping Boundaries},
journal = {SIAM Journal on Control and Optimization},
volume = {57},
number = {1},
pages = {402-436},
year = {2019},
}

@article{HaJeonOk2025Chooser,
  title        = {The Obstacle Problem Arising from the American Chooser Option},
  author       = {Gugyum Ha and Junkee Jeon and Jihoon Ok},
  journal      = {arXiv preprint},
  volume       = {arXiv:2506.03623},
  year         = {2025},
  eprint       = {2506.03623},
  archivePrefix= {arXiv},
  primaryClass = {math.AP},
  note         = {Submitted June 4, 2025; Revised June 7, 2025},
}
\end{document}